\newtheorem{theorem}{Theorem}
\theoremstyle{plain}
\newtheorem{corollary}{Corollary}
\newtheorem{definition}{Definition}
\newtheorem{lemma}{Lemma}
\newtheorem{remark}{Remark}
\numberwithin{equation}{section}
\begin{document}
\title[Integro-differential problems]{On the Cauchy problem for stochastic
integrodifferential parabolic equations in the scale of $L^{p}$-spaces of
generalized smoothness}
\author{R. Mikulevi\v{c}ius and C. Phonsom}
\address{University of Southern California, Los Angeles}
\date{October 23, 2017}
\subjclass{45K05, 60J75, 35B65}
\keywords{non-local parabolic integro-differential equations, L\'{e}vy
processes}

\begin{abstract}
Stochastic parabolic integro-differential problem is considered in the whole
space. By verifying stochastic H\"{o}rmander condition, the existence and
uniqueness is proved in $L_{p}$-spaces of functions whose regularity is
defined by a scalable Levy measure. Some rough probability density function
estimates of the associated Levy process are used as well.
\end{abstract}

\maketitle
\tableofcontents

\section{\protect\bigskip Introduction}

Let $\sigma \in \left( 0,2\right) $ and $\mathfrak{A}^{\sigma }$ be the
class of all nonnegative measures $\pi $\ on $\mathbf{R}_{0}^{d}=\mathbf{R}%
^{d}\backslash \left\{ 0\right\} $ such that $\int \left\vert y\right\vert
^{2}\wedge 1d\pi <\infty $ and 
\begin{equation*}
\sigma =\inf \left\{ \alpha <2:\int_{\left\vert y\right\vert \leq
1}\left\vert y\right\vert ^{\alpha }d{\pi }<\infty \right\} .
\end{equation*}%
In addition, we assume that for $\pi \in \mathfrak{A}^{\sigma },$ 
\begin{eqnarray*}
\int_{\left\vert y\right\vert >1}\left\vert y\right\vert d\pi &<&\infty 
\text{ if }\sigma \in \left( 1,2\right) , \\
\int_{R<\left\vert y\right\vert \leq R^{\prime }}yd\pi &=&0\text{ if }\sigma
=1\text{ for all }0<R<R^{\prime }<\infty .\text{ }
\end{eqnarray*}

Let $\left( \Omega ,\mathcal{F},\mathbf{P}\right) $ be a complete
probability space with a filtration of $\sigma -$algebras on $\mathbb{F}%
=\left( \mathcal{F}_{t},t\geq 0\right) $ satisfying the usual conditions.
Let $\mathcal{R}\left( \mathbb{F}\right) $ be the progressive $\sigma -$%
algebra on $\left[ 0,\infty \right) \times \Omega .$ Let $\left( U,\mathcal{U%
},\Pi \right) $ be a measurable space with $\sigma -$finite measure $\Pi ,%
\mathbf{R}_{0}^{d}=\mathbf{R}^{d}\backslash \left\{ 0\right\} .$ Let $%
p\left( dt,dz\right) $ be $\mathbb{F}-$adapted point measures on $\left( %
\left[ 0,\infty \right) \times U,\mathcal{B}\left( \left[ 0,\infty \right)
\right) \otimes \mathcal{U}\right) $ with compensator $\Pi \left( d\nu
\right) dt.$ We denote the martingale measure $q\left( dt,dz\right) =p\left(
dt,dz\right) -\Pi \left( dz\right) dt.$

In this paper we consider the parabolic Cauchy problem 
\begin{eqnarray}
du\left( t,x\right) &=&\left[ Lu(t,x)-\lambda u\left( t,x\right) +f(t,x)%
\right] dt  \label{mainEq} \\
&&+\int_{U}\Phi \left( t,x,z\right) q\left( dt,dz\right) ,  \notag \\
u\left( 0,x\right) &=&g\left( x\right) ,t\geq 0,x\in \mathbf{R}^{d},  \notag
\end{eqnarray}%
with $\lambda \geq 0$ and integro-differential operator 
\begin{equation*}
L\varphi \left( x\right) =L^{\pi }\varphi \left( x\right) =\int \left[
\varphi (x+y)-\varphi \left( x\right) -\chi _{\sigma }\left( y\right) y\cdot
\nabla \varphi \left( x\right) \right] \pi \left( dy\right) ,\varphi \in
C_{0}^{\infty }\left( \mathbf{R}^{d}\right) ,
\end{equation*}%
where $\pi \in \mathfrak{A}^{\sigma },$ $\chi _{\sigma }\left( y\right) =0$
if $\sigma \in \left[ 0,1\right) ,\chi _{\sigma }\left( y\right) =1_{\left\{
\left\vert y\right\vert \leq 1\right\} }\left( y\right) $ if $\sigma =1$ and 
$\chi _{\sigma }\left( y\right) =1$ if $\sigma \in (1,2).$ The symbol of $L$
is 
\begin{equation*}
{\psi }\left( \xi \right) =\psi ^{\pi }\left( \xi \right) =\int \left[
e^{i2\pi \xi \cdot y}-1-i2\pi \chi _{\sigma }\left( y\right) \xi \cdot y%
\right] \pi \left( dy\right) ,\xi \in \mathbf{R}^{d}.
\end{equation*}%
Note that $\pi \left( dy\right) =dy/\left\vert y\right\vert ^{d+\sigma }\in $%
{$\mathfrak{A}$}$^{\sigma }$ and, in this case, $L=L^{\pi }=c\left( \sigma
,d\right) \left( -\Delta \right) ^{\sigma /2}$, where $\left( -\Delta
\right) ^{\sigma /2}$\ is a fractional Laplacian. The equation (\ref{mainEq}%
) is forward Kolmogorov equation for the Levy process associated to $\psi
^{\pi }$. We assume that $g,f$ and $\Phi $ are resp. $\mathcal{F}_{0}\otimes 
\mathcal{B}\left( \mathbf{R}^{d}\right) $- ,$\mathcal{R}\left( \mathbb{F}%
\right) \otimes \mathcal{B}\left( \mathbf{R}^{d}\right) $- , $\Phi $ is $%
\mathcal{R}\left( \mathbb{F}\right) \otimes \mathcal{B}\left( \mathbf{R}%
^{d}\right) \otimes \mathcal{U}$-measurable.

Let $\mu \in ${$\mathfrak{A}$}$^{\sigma }$ and 
\begin{equation}
c_{1}\left\vert \psi ^{\mu }\left( \xi \right) \right\vert \leq |\psi ^{\pi
}\left( \xi \right) |\leq c_{2}\left\vert \psi ^{\mu }\left( \xi \right)
\right\vert ,\xi \in \mathbf{R}^{d},  \label{3'}
\end{equation}%
for some $0<c_{1}\leq c_{2}$. Given $\mu \in ${$\mathfrak{A}$}$^{\sigma
},p\in \left[ 1,\infty \right) ,s\in \mathbf{R}$, we denote $H_{p}^{s}\left(
E\right) =H_{p}^{\mu ;s}\left( E\right) $ the closure in $L_{p}\left(
E\right) $ of $C_{0}^{\infty }\left( E\right) $ with respect to the norm 
\begin{equation*}
\left\vert f\right\vert _{H_{p}^{\mu ;s}\left( E\right) }=\left\vert 
\mathcal{F}^{-1}\left( 1-\text{Re}\psi ^{\mu }\right) ^{s}\mathcal{F}%
f\right\vert _{L_{p}\left( \mathbf{R}^{d}\right) },\,
\end{equation*}%
where $\mathcal{F}$ is the Fourier transform in space variable. In this
paper, under certain \textquotedblright scalability\textquotedblright\ and
nondegeneracy assumptions (see assumptions \textbf{D}$\left( \kappa
,l\right) ,$ \textbf{B}$\left( \kappa ,l\right) $ below), we prove the
existence and uniqueness of solutions to (\ref{mainEq}) in the scale of
spaces $\mathbb{H}_{p}^{\mu ;s}\left( \mathbf{R}^{d}\right) $). Moreover, \ 
\begin{equation}
\left\vert u\right\vert _{\mathbb{H}_{p}^{s}\left( E\right) }\leq C\left[
\left\vert f\right\vert _{\mathbb{H}_{p}^{s-1}\left( E\right) }+\left\vert
g\right\vert _{\mathbb{H}_{p}^{s-\frac{1}{p}}\left( \mathbf{R}^{d}\right)
}+\left\vert \Phi \right\vert _{\mathbb{H}_{2,p}^{s-\frac{1}{2}}\left(
E\right) }+\left\vert \Phi \right\vert _{\mathbb{B}_{p,pp}^{s-\frac{1}{p}%
}\left( E\right) }\right]  \label{4}
\end{equation}%
if $p\geq 2,$where $\mathbb{B}_{p,pp}^{s}$ is the Besov "counterpart" of $%
\mathbb{H}_{p}^{s}$. This paper is a continuation of \cite{MPh} and \cite%
{Mph2}, where (\ref{mainEq}) with $\Phi =0$ was considered. Since the symbol 
$\psi ^{\pi }\left( \xi \right) $ is not smooth in $\xi $, the standard
Fourier multiplier results do not apply in this case. In order to prove the
estimate involving $\Phi $ in (\ref{4}), we follow the idea of \cite{KK2},
by applying a version of Calderon-Zygmund theorem by associating to $L^{\pi
} $ a family of balls and verifying for it the stochastic H\"{o}rmander
condition (see Theorem \ref{stochHormander} in Appendix). As an example, we
consider $\pi \in ${$\mathfrak{A}$}$^{\sigma }$ defined in radial and
angular coordinates $r=\left\vert y\right\vert ,w=y/r,$ as 
\begin{equation}
\pi \left( \Gamma \right) =\int_{0}^{\infty }\int_{\left\vert w\right\vert
=1}\chi _{\Gamma }\left( rw\right) a\left( r,w\right) j\left( r\right)
r^{d-1}S\left( dw\right) dr,\Gamma \in \mathcal{B}\left( \mathbf{R}%
_{0}^{d}\right) ,  \label{5}
\end{equation}%
where $S\left( dw\right) $ is a finite measure on the unit sphere on $%
\mathbf{R}^{d}$. In \cite{zh}, (\ref{mainEq}) with $g=0$ and $\Phi =0$, was
considered, with $\pi $ in the form (\ref{5}) with $a=1,j\left( r\right)
=r^{-d-\sigma },$ and such that 
\begin{eqnarray*}
&&\int_{0}^{\infty }\int_{\left\vert w\right\vert =1}\chi _{\Gamma }\left(
rw\right) r^{-1-\sigma }\rho _{0}\left( w\right) S\left( dw\right) dr \\
&\leq &\pi \left( \Gamma \right) =\int_{0}^{\infty }\int_{\left\vert
w\right\vert =1}\chi _{\Gamma }\left( rw\right) r^{-1-\sigma }a\left(
r,w\right) S\left( dw\right) dr \\
&\leq &\int_{0}^{\infty }\int_{\left\vert w\right\vert =1}\chi _{\Gamma
}\left( rw\right) r^{-1-\sigma }S\left( dw\right) dr,\Gamma \in \mathcal{B}%
\left( \mathbf{R}_{0}^{d}\right) ,
\end{eqnarray*}%
and (\ref{3'}) holds with $\psi ^{\mu }\left( \xi \right) =\left\vert \xi
\right\vert ^{\sigma },\xi \in \mathbf{R}^{d}$. In this case, $H_{p}^{\mu
;1}\left( E\right) =H_{p}^{\sigma }\left( E\right) $ is the standard
fractional Sobolev space. The solution estimate (\ref{4}) for (\ref{mainEq})
was derived in \cite{zh}, using $L^{\infty }$-$BMO$ type estimate. In \cite%
{KK1}, an elliptic problem in the whole space with $L^{\pi }$ was studied
for $\pi $ in the form (\ref{5}) with $S\left( dw\right) =dw$ being a
Lebesgue measure on the unit sphere in $\mathbf{R}^{d}$, with $0<c_{1}\leq
a\leq c_{2}$, and a set of technical assumptions on $j\left( r\right) $. A
sharp function estimate based on the solution H\"{o}lder norm estimate
(following the idea in \cite{KimDong}) was used in \cite{KK1}.

The paper is organized as follows. In Section 2, the main theorem is stated,
and some examples of the form (\ref{5}) are considered. In Section 3,
auxiliary results on approximation of input functions and some probability
density estimates are presented. In section 4, the main result is proved. In
Appendix, stochastic integrals driven by jump measures are constructed and H%
\"{o}rmander condition discussed.

\section{Notation, Function spaces main results and examples}

\subsection{Notation}

The following notation will be used in the paper.

Let $\mathbf{N}=\{1,2,\ldots\},\mathbf{N}_{0}=\left\{ 0,1,\ldots\right\} ,%
\mathbf{R}_{0}^{d}=\mathbf{R}^{d}\backslash\{0\}.$ If $x,y\in\mathbf{R}^{d},$%
\ we write 
\begin{equation*}
x\cdot y=\sum_{i=1}^{d}x_{i}y_{i},\,|x|=\sqrt{x\cdot x}.
\end{equation*}

We denote by $C_{0}^{\infty}(\mathbf{R}^{d})$ the set of all infinitely
differentiable functions on $\mathbf{R}^{d}$ with compact support.

We denote the partial derivatives in $x$ of a function $u(t,x)$ on $\mathbf{R%
}^{d+1}$ by $\partial_{i}u=\partial u/\partial x_{i}$, $\partial_{ij}^{2}u=%
\partial^{2}u/\partial x_{i}\partial x_{j}$, etc.; $Du=\nabla
u=(\partial_{1}u,\ldots,\partial_{d}u)$ denotes the gradient of $u$ with
respect to $x$; for a multiindex $\gamma\in\mathbf{N}_{0}^{d}$ we denote 
\begin{equation*}
D_{x}^{{\scriptsize \gamma}}u(t,x)=\frac{\partial^{|{\scriptsize \gamma|}%
}u(t,x)}{\partial x_{1}^{{\scriptsize \gamma_{1}}}\ldots\partial x_{d}^{%
{\scriptsize \gamma_{d}}}}.
\end{equation*}
For $\alpha\in(0,2]$ and a function $u(t,x)$ on $\mathbf{R}^{d+1}$, we write 
\begin{equation*}
\partial^{{\scriptsize \alpha}}u(t,x)=-\mathcal{F}^{-1}[|\xi|^{{\scriptsize %
\alpha}}\mathcal{F}u(t,\xi)](x),
\end{equation*}
where 
\begin{equation*}
\mathcal{F}h(t,\xi)=\hat{h}\left(\xi\right)=\int_{\mathbf{R}^{d}}\,\mathrm{e}%
^{-i2\pi\xi\cdot x}h(t,x)dx,\mathcal{F}^{-1}h(t,\xi)=\int_{\mathbf{R}^{d}}\,%
\mathrm{e}^{i2\pi\xi\cdot x}h(t,\xi)d\xi.
\end{equation*}

For $\mu \in ${$\mathfrak{A}$}$^{\sigma }$, we denote $Z_{t}^{\mu },t\geq 0,$
the Levy process associated to $L^{\mu }$, i.e., $Z^{\mu }$ is cadlag with
independent increments and its characteristic function 
\begin{equation*}
\mathbf{E}e^{i2\pi \xi \cdot Z_{t}^{\mu }}=\exp \left\{ \psi ^{\mu }\left(
\xi \right) t\right\} ,\xi \in \mathbf{R}^{d},t\geq 0.
\end{equation*}%
The letters $C=C(\cdot ,\ldots ,\cdot )$ and $c=c(\cdot ,\ldots ,\cdot )$
denote constants depending only on quantities appearing in parentheses. In a
given context the same letter will (generally) be used to denote different
constants depending on the same set of arguments.

\subsection{Function Spaces}

Let $S\left( \mathbf{R}^{d}\right) $ be the Schwartz space of real-valued
rapidly decreasing functions. Let $V$ be a Banach space with norm $%
\left\vert \cdot \right\vert _{V}$. The space of $V-$valued tempered
distribution we denote by $S^{\prime }\left( \mathbf{R}^{d},V\right) $($f\in
S^{\prime }\left( \mathbf{R}^{d},V\right) $ is a continuous $V-$valued
linear functional on $S\left( \mathbf{R}^{d}\right) $). If $V=\mathbf{R}$,
we write $S^{\prime }\left( \mathbf{R}^{d},V\right) =S^{\prime }\left( 
\mathbf{R}^{d}\right) $ and denote by $\left\langle \cdot ,\cdot
\right\rangle $ the duality between $S\left( \mathbf{R}^{d}\right) $ and $%
S^{\prime }\left( \mathbf{R}^{d}\right) $.

For a $V-$valued measurable function $h$ on $\mathbf{R}^{d}$and $p\geq1$ we
denote 
\begin{equation*}
\left|h\right|_{V,p}^{p}=\int_{\mathbf{R}^{d}}\left|h\left(x\right)%
\right|_{V}^{p}dx.
\end{equation*}

We fix $\mu \in ${$\mathfrak{A}$}$^{\sigma }$. Obviously, $\text{Re}\psi
^{\mu }=\psi ^{\mu _{sym}}$, where 
\begin{equation*}
\mu _{sym}\left( dy\right) =\frac{1}{2}\left[ \mu \left( dy\right) +\mu
\left( -dy\right) \right] .
\end{equation*}%
Let 
\begin{equation*}
Jv=J_{\mu }v=(I-L^{\mu _{sym}})v=v-L^{\mu _{sym}}v,v\in \mathcal{S}\left( 
\mathbf{R}^{d},V\right) .
\end{equation*}%
For $s\in \mathbf{R}$ set 
\begin{equation*}
J^{s}v=\left( I-L^{\mu _{sym}}\right) ^{s}v=\mathcal{F}^{-1}[(1-\psi ^{\mu
_{sym}})^{s}\hat{v}],v\in \mathcal{S}\left( \mathbf{R}^{d},V\right) .
\end{equation*}

\begin{equation*}
L^{\mu ;s}v=\mathcal{F}^{-1}\left( -\left( -\psi ^{\mu _{sym}}\right) ^{s}%
\hat{v}\right) ,v\in \mathcal{S}\left( \mathbf{R}^{d},V\right) .
\end{equation*}

Note that $L^{\mu;1}v=L^{\mu_{sym}}v,v\in\mathcal{S}\left(\mathbf{R}%
^{d}\right).$

For $p\in \left[ 1,\infty \right) ,s\in \mathbf{R,}$ we define, following 
\cite{fjs}, the Bessel potential space $H_{p}^{s}\left( \mathbf{R}%
^{d},V\right) =H_{p}^{\mu ;s}\left( \mathbf{R}^{d},V\right) $ as the closure
of $\mathcal{S}\left( \mathbf{R}^{d},V\right) $ in the norm 
\begin{eqnarray*}
\left\vert v\right\vert _{H_{p}^{s}\left( \mathbf{R}^{d},V\right) }
&=&\left\vert J^{s}v\right\vert _{L_{p}\left( \mathbf{R}^{d},V\right)
}=\left\vert \mathcal{F}^{-1}[(1-\psi ^{\mu _{sym}})^{s}\hat{v}]\right\vert
_{L_{p}\left( \mathbf{R}^{d},V\right) } \\
&=&\left\vert \left( I-L^{\mu _{sym}}\right) ^{s}v\right\vert _{L_{p}\left( 
\mathbf{R}^{d},V\right) },v\in \mathcal{S}\left( \mathbf{R}^{d}\right) .
\end{eqnarray*}%
According to Theorem 2.3.1 in \cite{fjs}, $H_{p}^{t}\left( \mathbf{R}%
^{d}\right) \subseteq H_{p}^{s}\left( \mathbf{R}^{d}\right) \,\ $is
continuously embedded if $p\in \left( 1,\infty \right) ,s<t$, $%
H_{p}^{0}\left( \mathbf{R}^{d}\right) =L_{p}\left( \mathbf{R}^{d}\right) $.
For $s\geq 0,p\in \left[ 1,\infty \right) ,$ the norm $\left\vert
v\right\vert _{H_{p}^{s}}$ is equivalent to (see Theorem 2.2.7 in \cite{fjs}%
) 
\begin{equation*}
\left\vert v\right\vert _{H_{p}^{s}}=\left\vert v\right\vert
_{L_{p}}+\left\vert \mathcal{F}^{-1}\left[ (-\psi ^{\mu _{sym}})^{s}\mathcal{%
F}v\right] \right\vert _{L_{p}}.
\end{equation*}

Further, for a characterization of our function spaces we will use the
following construction (see \cite{lofstrom}). We fix a continuous function $%
\kappa :(0,\infty )\rightarrow (0,\infty )$ such that $\lim_{R\rightarrow
0}\kappa \left( R\right) =0,\lim_{R\rightarrow \infty }\kappa \left(
R\right) =\infty $. Assume there is a nondecreasing continuous function $%
l\left( \varepsilon \right) ,\varepsilon >0,$ such that $\lim_{\varepsilon
\rightarrow 0}l\left( \varepsilon \right) =0$ and 
\begin{equation*}
\kappa \left( \varepsilon r\right) \leq l\left( \varepsilon \right) \kappa
(r),r>0,\varepsilon >0.
\end{equation*}%
We say $\kappa $ is a \emph{scaling function }and call $l\left( \varepsilon
\right) ,\varepsilon >0,$ a \emph{scaling factor} of $\kappa $. Fix an
integer $N$ so that $l\left( N^{-1}\right) <1.$

\begin{remark}
\label{re1}For an integer $N>1$ there exists a function $\phi =\phi ^{N}\in
C_{0}^{\infty }(\mathbf{R}^{d})$ (see Lemma 6.1.7 in \cite{lofstrom}), such
that $\mathrm{supp}~\phi =\left\{ \xi :\frac{1}{N}\leq \left\vert \xi
\right\vert \leq N\right\} $ , $\phi (\xi )>0$ if $N^{-1}<|\xi |<N$ and 
\begin{equation*}
\sum_{j=-\infty }^{\infty }\phi (N^{-j}\xi )=1\quad \text{if }\xi \neq 0.
\end{equation*}%
Let 
\begin{equation}
\tilde{\phi}\left( \xi \right) =\phi \left( N\xi \right) +\phi \left( \xi
\right) +\phi \left( N^{-1}\xi \right) ,\xi \in \mathbf{R}^{d}.  \label{pp1}
\end{equation}%
Note that supp$~\tilde{\phi}\subseteq \left\{ N^{-2}\leq \left\vert \xi
\right\vert \leq N^{2}\right\} $ and $\tilde{\phi}\phi =\phi $. Let $\varphi
_{k}=\varphi _{k}^{N}=\mathcal{F}^{-1}\phi \left( N^{-k}\cdot \right) ,k\geq
1,$ and $\varphi _{0}=\varphi _{0}^{N}\in \mathcal{S}\left( \mathbf{R}%
^{d}\right) $ is defined as 
\begin{equation*}
\varphi _{0}=\mathcal{F}^{-1}\left[ 1-\sum_{k=1}^{\infty }\phi \left(
N^{-k}\cdot \right) \right] .
\end{equation*}%
Let $\phi _{0}\left( \xi \right) =\mathcal{F}\varphi _{0}\left( \xi \right) ,%
\tilde{\phi}_{0}\left( \xi \right) =\mathcal{F}\varphi _{0}\left( \xi
\right) +\mathcal{F\varphi }_{1}\left( \xi \right) ,\xi \in \mathbf{R}^{d}%
\mathbf{,}\tilde{\varphi}=\mathcal{F}^{-1}\tilde{\phi},\varphi =\mathcal{F}%
^{-1}\phi ,$ and 
\begin{equation*}
\tilde{\varphi}_{k}=\sum_{l=-1}^{1}\varphi _{k+l},k\geq 1,\tilde{\varphi}%
_{0}=\varphi _{0}+\varphi _{1}
\end{equation*}%
that is 
\begin{eqnarray*}
\mathcal{F\tilde{\varphi}}_{k} &=&\phi \left( N^{-k+1}\xi \right) +\phi
\left( N^{-k}\xi \right) +\phi \left( N^{-k-1}\xi \right) \\
&=&\tilde{\phi}\left( N^{-k}\xi \right) ,\xi \in \mathbf{R}^{d},k\geq 1.
\end{eqnarray*}%
Note that $\varphi _{k}=\tilde{\varphi}_{k}\ast \varphi _{k},k\geq 0$.
Obviously, $f=\sum_{k=0}^{\infty }f\ast \varphi _{k}$ in $\mathcal{S}%
^{\prime }\left( \mathbf{R}^{d}\right) $ for $f\in \mathcal{S}\left( \mathbf{%
R}^{d}\right) .$
\end{remark}

Let $s\in \mathbf{R}$ and $p,q\geq 1$. For $\mu \in ${$\mathfrak{A}$}$%
^{\sigma }$, we introduce the Besov space $B_{pq}^{s}=B_{pq}^{\mu ,N;s}(%
\mathbf{R}^{d},V)$ as the closure of $\mathcal{S}\left( \mathbf{R}%
^{d},V\right) $ in the norm 
\begin{equation*}
|v|_{B_{pq}^{s}(\mathbf{R}^{d},V)}=|v|_{B_{pq}^{\mu ,N;s}(\mathbf{R}%
^{d},V)}=\left( \sum_{j=0}^{\infty }|J^{s}\varphi _{j}\ast v|_{L_{p}\left( 
\mathbf{R}^{d},V\right) }^{q}\right) ^{1/q},
\end{equation*}%
where $J=J_{\mu }=I-L^{\mu _{sym}}.$

We introduce the corresponding spaces of generalized functions on $%
E=[0,T]\times\mathbf{R}^{d}$ $.$ The spaces $B_{pq}^{\mu,N;s}(E,V)$ (resp. $%
H_{p}^{\mu;s}(E,V)$) consist of all measurable $B_{pq}^{\mu,N;s}(\mathbf{R}%
^{d},V)$ (resp. $H_{p}^{\mu;s}(\mathbf{R}^{d},V)$) -valued functions $f$ on $%
[0,T]$ with finite corresponding norms: 
\begin{eqnarray}
|f|_{B_{pq}^{s}(E,V)} & = &
|f|_{B_{pq}^{\mu,N;s}(E,V)}=\left(\int_{0}^{T}|f(t,\cdot)|_{B_{pq}^{\mu,N;s}(%
\mathbf{R}^{d},V)}^{q}dt\right)^{1/q},  \notag \\
|f|_{H_{p}^{s}(E,V)} & = &
|f|_{H_{p}^{\mu;s}(E,V)}=\left(\int_{0}^{T}|f(t,\cdot)|_{H_{p}^{\mu,s}(%
\mathbf{R}^{d},V)}^{p}dt\right)^{1/p}.  \label{norm11}
\end{eqnarray}

Similarly we introduce the corresponding spaces of random generalized
functions.

Let $\left(\Omega,\mathcal{F},\mathbf{P}\right)$ be a complete probability
spaces with a filtration of $\sigma-$algebras $\mathbb{F}=\left(\mathcal{F}%
_{t}\right)$ satisfying the usual conditions. Let $\mathcal{R}\left(\mathbb{F%
}\right)$ be the progressive $\sigma-$algebra on $\left[0,\infty\right)%
\times\Omega$.

The spaces $\mathbb{B}_{pp}^{s}\left(\mathbf{R}^{d},V\right)$ and $\mathbb{H}%
_{p}^{s}\left(\mathbf{R}^{d},V\right)$ consists of all $\mathcal{F}-$%
measurable random functions $f$ with values in $B_{pp}^{s}\left(\mathbf{R}%
^{d},V\right)$ and $H_{p}^{s}\left(\mathbf{R}^{d},V\right)$ with finite
norms 
\begin{equation*}
\left|f\right|_{\mathbb{B}_{pp}^{s}\left(\mathbf{R}^{d},V\right)}=\left\{ 
\mathbf{E}\left|f\right|_{B_{pp}^{s}\left(\mathbf{R}^{d},V\right)}^{p}\right%
\} ^{1/p}
\end{equation*}

and 
\begin{equation*}
\left|f\right|_{\mathbb{H}_{p}^{s}\left(\mathbf{R}^{d},V\right)}=\left\{ 
\mathbf{E}\left|f\right|_{H_{p}^{s}\left(\mathbf{R}^{d},V\right)}^{p}\right%
\} ^{1/p}.
\end{equation*}

The spaces $\mathbb{B}_{pp}^{s}\left(E,V\right)$ and $\mathbb{H}%
_{p}^{s}\left(E,V\right)$ consist of all $\mathcal{R}\left(\mathbb{F}%
\right)- $measurable random functions with values in $B_{pp}^{s}\left(E,V%
\right)$ and $H_{p}^{s}\left(E,V\right)$ with finite norms 
\begin{equation*}
\left|f\right|_{\mathbb{B}_{pp}^{s}}\left(E,V\right)=\left\{ \mathbf{E}%
\left|f\right|_{B_{pp}^{s}\left(E,V\right)}^{p}\right\} ^{1/p}
\end{equation*}

and 
\begin{equation*}
\left\vert f\right\vert _{\mathbb{H}_{p}^{s}}\left( E,V\right) =\left\{ 
\mathbf{E}\left\vert f\right\vert _{H_{p}^{s}\left( E,V\right) }^{p}\right\}
^{1/p}.
\end{equation*}

If $V_{r}=L_{r}\left( U,\mathcal{U},\Pi \right) ,r\geq 1$, the space of $r-$%
integrable measurable functions on $U$, and $V_{0}=\mathbf{R}$, we write 
\begin{eqnarray*}
B_{r,pp}^{s}\left( A\right) &=&B_{pp}^{s}\left( A,V\right) ,\hspace{1em}%
\mathbb{B}_{r,pp}^{s}\left( A\right) =\mathbb{\mathbb{B}}_{pp}^{s}\left(
A,V\right) , \\
H_{r,p}^{s}\left( A\right) &=&H_{p}^{s}\left( A,V\right) ,\hspace{1em}%
\mathbb{H}_{r,p}^{s}\left( A\right) =\mathbb{H}_{p}^{s}\left( A,V\right) ,
\end{eqnarray*}%
and%
\begin{equation*}
L_{r,p}\left( A\right) =H_{r,p}^{0}\left( A\right) ,\mathbb{L}_{r,p}\left(
A\right) =\mathbb{H}_{r,p}^{0}\left( A\right) ,
\end{equation*}%
where $A=\mathbf{R}^{d}$ or $E$. For scalar functions we drop $V$ in the
notation of function spaces.

\subsection{Main Results}

We introduce an auxiliary Levy measure $\mu^{0}$ on $\mathbf{R}_{0}^{d}$
such that the following assumption holds.

\textbf{Assumption A}$_{0}\left( \sigma \right) $. \emph{Let} $\mu ^{0}\in {%
\mathfrak{A}}=\cup _{\sigma \in \left( 0,2\right) }{\mathfrak{A}}^{\sigma }{,%
}\chi _{\left\{ \left\vert y\right\vert \leq 1\right\} }\mu ^{0}\left(
dy\right) =\mu ^{0}\left( dy\right) $, \emph{and} 
\begin{equation*}
\int \left\vert y\right\vert ^{2}\mu ^{0}\left( dy\right) +\int \left\vert
\xi \right\vert ^{4}[1+\lambda \left( \xi \right) ]^{d+3}\exp \left\{ -\psi
_{0}\left( \xi \right) \right\} d\xi \leq n_{0},
\end{equation*}

\emph{where} 
\begin{eqnarray*}
\psi_{0}\left(\xi\right) & = & \int_{\left\vert y\right\vert \leq1}\left[%
1-\cos\left(2\pi\xi\cdot y\right)\right]\mu^{0}\left(dy\right), \\
\lambda\left(\xi\right) & = & \int_{\left\vert y\right\vert
\leq1}\chi_{\sigma}\left(y\right)\left\vert y\right\vert [\left(\left\vert
\xi\right\vert \left\vert y\right\vert
\right)\wedge1]\mu^{0}\left(dy\right),\xi\in\mathbf{R}^{d}.
\end{eqnarray*}
\emph{In addition, we assume that for any} $\xi\in S_{d-1}=\left\{ \xi\in%
\mathbf{R}^{d}:\left\vert \xi\right\vert =1\right\} ,$ 
\begin{equation*}
\int_{\left\vert y\right\vert \leq1}\left\vert \xi\cdot y\right\vert
^{2}\mu^{0}\left(dy\right)\geq c_{1}>0.
\end{equation*}

For $\pi \in ${$\mathfrak{A}$}$=\cup _{\sigma \in \left( 0,2\right) }${$%
\mathfrak{A}$}$^{\sigma }$ and $R>0$, we denote 
\begin{equation*}
\pi _{R}\left( \Gamma \right) =\int \chi _{\Gamma }\left( y/R\right) \pi
\left( dy\right) ,\Gamma \in \mathcal{B}\left( \mathbf{R}_{0}^{d}\right) .
\end{equation*}

\begin{definition}
We say that a continuous function $\kappa:(0,\infty)\rightarrow(0,\infty)$
is a scaling function if $\lim_{R\rightarrow0}\kappa\left(R\right)=0,\lim_{R%
\rightarrow\infty}\kappa\left(R\right)=\infty$ and there is a nondecreasing
continuous function $l\left(\varepsilon\right),\varepsilon>0,$ such that $%
\lim_{\varepsilon\rightarrow0}l\left(\varepsilon\right)=0$ and 
\begin{equation*}
\kappa\left(\varepsilon r\right)\leq
l\left(\varepsilon\right)\kappa(r),r>0,\varepsilon>0.
\end{equation*}
We call $l\left(\varepsilon\right),\varepsilon>0,$ a scaling factor of $%
\kappa$.
\end{definition}

For a scaling function $\kappa $ with a scaling factor $l$ and $\pi \in ${$%
\mathfrak{A}$}$^{\sigma }$ we introduce the following assumptions.

\textbf{D}$\left(\kappa,l\right)$\textbf{. }\emph{For every} $R>0,$ 
\begin{equation*}
\tilde{\pi}_{R}\left(dy\right)=\kappa\left(R\right)\pi_{R}\left(dy\right)%
\geq1_{\left\{ \left\vert y\right\vert \leq1\right\} }\mu^{0}\left(dy\right),
\end{equation*}
\emph{with }$\mu^{0}=\mu^{0;\pi}$\emph{\ satisfying Assumption }\textbf{A}$%
_{0}\left(\sigma\right)$. \emph{If }$\sigma=1$ \emph{we, in addition assume
that }$\int_{R<\left\vert y\right\vert \leq
R^{\prime}}y\mu^{0}\left(dy\right)=0$ \emph{for any} $0<R<R^{\prime}\leq1.$ 
\emph{\ Here }$\tilde{\pi}_{R}\left(dy\right)=\kappa\left(R\right)\pi_{R}%
\left(dy\right).$

\textbf{B}$\left(\kappa,l\right)$\textbf{. }\emph{There exist }$\alpha_{1}$%
\emph{\ and }$\alpha_{2}$\emph{\ and a constant }$N_{0}>0$ \emph{such that} 
\begin{equation*}
\int_{\left\vert z\right\vert \leq1}\left\vert z\right\vert ^{\alpha_{1}}%
\tilde{\pi}_{R}(dz)+\int_{\left\vert z\right\vert >1}\left\vert z\right\vert
^{\alpha_{2}}\tilde{\pi}_{R}(dz)\leq N_{0}\text{ }\forall R>0,
\end{equation*}
\emph{where }$\alpha_{1},\alpha_{2}\in(0,1]\text{ \emph{if} }\sigma\in(0,1)%
\text{; }\alpha_{1},\alpha_{2}\in(1,2]\text{ \emph{if} }\sigma\in(1,2)$\emph{%
; }$\alpha_{1}\in(1,2]$\emph{\ and }$\alpha_{2}\in\left[0,1\right)$\emph{\
if }$\sigma=1$\emph{.}

The main result for (\ref{mainEq}) is the following statement.

\begin{theorem}
\label{t1}Let $\pi ,\mu \in \mathfrak{A}^{\sigma },p\in \left( 1,\infty
\right) ,s\in \mathbf{R}$. Assume there is a scaling function $\kappa $ with
a scaling factor $l$ such that \textbf{D}$\left( \kappa ,l\right) $ and 
\textbf{B}$\left( \kappa ,l\right) $ hold for both, $\pi $ and $\mu $.
Assume 
\begin{equation*}
\int_{1}^{\infty }\frac{dt}{t\gamma \left( t\right) ^{1\wedge \alpha _{2}}}%
<\infty ,
\end{equation*}%
and there are $\beta _{0}<\alpha _{2}$ and $\beta _{1},\beta _{2}>0$ such
that 
\begin{equation*}
\int_{0}^{1}\gamma \left( t\right) ^{-\beta _{1}}dt+\int_{0}^{1}l\left(
t\right) ^{\beta _{2}}\frac{dt}{t}+\int_{1}^{\infty }\frac{1}{\gamma \left(
t\right) ^{\beta _{0}}}\frac{dt}{t}<\infty \text{ if }p>2,
\end{equation*}%
where $\gamma \left( t\right) =\inf \left\{ r:l\left( r\right) \geq
t\right\} ,t>0.$

Then for each $f\in \mathbb{H}_{p}^{\mu ;s}(E),g\in \mathbb{B}_{pp}^{\mu
,N;s+1-1/p}\left( \mathbf{R}^{d}\right) $, $\Phi \in \mathbb{B}_{p,pp}^{\mu
,N;s+1-1/p}\left( E\right) \cap \mathbb{H}_{2,p}^{\mu ;s+1/2}\left( E\right) 
\hspace{1em}$if $p\in \left[ 2,\infty \right) $ and $\Phi \in \mathbb{B}%
_{p,pp}^{\mu ,N;s+1-1/p}\left( E\right) $ if\hspace{1em}$p\in \left(
1,2\right) $, there is a unique $u\in \mathbb{H}_{p}^{\mu ;s+1}\left(
E\right) $ solving (\ref{mainEq}). Moreover, there is $C=C\left( d,p,\kappa
,l,n_{0},N_{0},c_{1}\right) $ such that for $p\in \left[ 2,\infty \right) $, 
\begin{eqnarray*}
&&\left\vert L^{\mu }u\right\vert _{\mathbb{H}_{p}^{\mu ;s}\left( E\right) }
\\
&\leq &C\left[ \left\vert f\right\vert _{\mathbb{H}_{p}^{\mu ;s}\left(
E\right) }+\left\vert g\right\vert _{\mathbb{B}_{pp}^{\mu ,N;s+1-1/p}\left( 
\mathbf{R}^{d}\right) }+\left\vert \Phi \right\vert _{\mathbb{B}_{p,pp}^{\mu
,N;s+1-1/p}\left( E\right) }+\left\vert \Phi \right\vert _{\mathbb{H}%
_{2,p}^{\mu ;s+1/2}\left( E\right) }\right] , \\
&&\left\vert u\right\vert _{\mathbb{H}_{p}^{\mu ;s}\left( E\right) } \\
&\leq &C[\rho _{\lambda }\left\vert f\right\vert _{\mathbb{H}_{p}^{\mu
;s}\left( E\right) }+\rho _{\lambda }^{1/p}\left\vert g\right\vert _{\mathbb{%
H}_{p}^{\mu ;s}\left( \mathbf{R}^{d}\right) }+\rho _{\lambda
}^{1/p}\left\vert \Phi \right\vert _{\mathbb{H}_{p,p}^{\mu ;s}\left( \mathbf{%
R}^{d}\right) }+\rho _{\lambda }^{1/2}\left\vert \Phi \right\vert _{\mathbb{H%
}_{2,p}^{\mu ;s}\left( \mathbf{R}^{d}\right) }],
\end{eqnarray*}

and for $p\in \left( 1,2\right) $, 
\begin{eqnarray*}
\left\vert L^{\mu }u\right\vert _{\mathbb{H}_{p}^{\mu ;s}\left( E\right) }
&\leq &C\left[ \left\vert f\right\vert _{\mathbb{H}_{p}^{\mu ;s}\left(
E\right) }+\left\vert g\right\vert _{\mathbb{B}_{pp}^{\mu ,N;s+1-1/p}\left( 
\mathbf{R}^{d}\right) }+\left\vert \Phi \right\vert _{\mathbb{B}_{p,pp}^{\mu
,N;s+1-1/p}\left( E\right) }\right] , \\
\left\vert u\right\vert _{\mathbb{H}_{p}^{\mu ;s}\left( E\right) } &\leq &C%
\left[ \rho _{\lambda }\left\vert f\right\vert _{\mathbb{H}_{p}^{\mu
;s}\left( E\right) }+\rho _{\lambda }^{1/p}\left\vert g\right\vert _{\mathbb{%
H}_{p}^{\mu ;s}\left( \mathbf{R}^{d}\right) }+\rho _{\lambda
}^{1/p}\left\vert \Phi \right\vert _{\mathbb{H}_{p,p}^{\mu ;s}\left( \mathbf{%
R}^{d}\right) }\right] ,
\end{eqnarray*}

where $\rho_{\lambda}=\frac{1}{\lambda}\wedge T.$
\end{theorem}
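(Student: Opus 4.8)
The plan is to follow the standard two–step route for stochastic parabolic equations: first establish the a priori estimate for smooth (or otherwise regular) data together with existence of a solution, then pass to the general case by approximation and a density/continuity argument. By the mapping properties of $J^s = (I-L^{\mu_{sym}})^s$ and the equivalence of $L^\mu u$ and $Ju$ up to lower order (using \eqref{3'} and \textbf{D}$(\kappa,l)$, \textbf{B}$(\kappa,l)$ for both $\pi$ and $\mu$), it suffices to treat the case $s=0$; applying $J^s$ to \eqref{mainEq} reduces a general $s$ to this case since $J^s$ commutes with $L$ and with the stochastic integral. So I would fix $s=0$ and seek $u\in\mathbb H_p^{\mu;1}(E)$.

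First I would treat the deterministic part, i.e. $\Phi=0$: this is exactly the content of \cite{MPh,Mph2}, which under \textbf{D}$(\kappa,l)$ and \textbf{B}$(\kappa,l)$ gives existence, uniqueness, and the estimate $|L^\mu u|_{\mathbb H_p^{\mu;0}(E)}\le C(|f|_{\mathbb H_p^{\mu;0}(E)}+|g|_{\mathbb B_{pp}^{\mu,N;1-1/p}(\mathbf R^d)})$ along with the $\rho_\lambda$-weighted bound on $|u|_{\mathbb H_p^{\mu;0}}$ coming from the resolvent. Then I would add the stochastic forcing. Write $u = u_1 + u_2$, where $u_1$ solves the deterministic problem with data $f,g$ and $u_2$ solves $du_2 = (Lu_2-\lambda u_2)dt + \int_U \Phi\, q(dt,dz)$, $u_2(0)=0$. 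For $u_2$ the key is the representation via the semigroup $T_t$ generated by $L-\lambda$: $u_2(t,x) = \int_0^t\!\int_U T_{t-r}\Phi(r,\cdot,z)(x)\,q(dr,dz)$, which is well defined once $\Phi$ lies in the stated spaces. The whole game is then to bound $|L^\mu u_2|_{\mathbb H_p^{\mu;0}(E)} = |L^\mu u_2|_{\mathbb L_p(E)}$ (with the random $L_p(\Omega)$ on top).

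The main obstacle — and the reason for the hypotheses on $\gamma$ and $l$ in the theorem — is precisely this $L_p$ bound on the stochastic convolution, since the symbol $\psi^\pi$ is not smooth and classical Fourier-multiplier/maximal-regularity machinery does not apply. Here I would follow \cite{KK2}: associate to $L^\pi$ the family of balls $B_R$ adapted to the scaling $\kappa$, verify that the kernel of $L^\mu T_t$ together with the jump structure satisfies the \emph{stochastic Hörmander condition} of Theorem \ref{stochHormander} in the Appendix — this is where the probability density estimates for $Z^\mu_t$ from Section 3 and the integrability conditions $\int_1^\infty \frac{dt}{t\gamma(t)^{1\wedge\alpha_2}}<\infty$, $\int_0^1\gamma^{-\beta_1}dt + \int_0^1 l(t)^{\beta_2}\frac{dt}{t} + \int_1^\infty \gamma^{-\beta_0}\frac{dt}{t}<\infty$ (for $p>2$) enter to control the oscillation of the kernel across dyadic-in-$\kappa$ time–space scales. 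Given the Hörmander condition, the Calderón–Zygmund theorem for the associated balls upgrades the $L_2$ bound (which follows for $p=2$ from Itô's isometry for $q(dt,dz)$, giving the $|\Phi|_{\mathbb H_{2,p}^{\mu;s+1/2}}$ term, plus a Burkholder-type $|\Phi|_{\mathbb B_{p,pp}}$ term for the large-jump/$L_p$ part) to the $L_p$ bound for all $p\in(1,\infty)$; for $1<p<2$ one argues by duality/interpolation so only the Besov term $|\Phi|_{\mathbb B_{p,pp}^{\mu,N;s+1-1/p}}$ survives, and for $p\ge 2$ both terms appear as in the statement. Combining with the $u_1$ estimate, using $|L^\mu u|\approx |u|_{H^{\mu;1}}$ modulo lower-order terms absorbed via the $\rho_\lambda$-weight, and then extending from regular $\Phi,f,g$ to the full spaces by the approximation results of Section 3 and the already-established a priori estimate yields existence, uniqueness, and \eqref{4}.
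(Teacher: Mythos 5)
Your overall skeleton matches the paper: reduce to $s=0$ via the isomorphism $J^s_\mu$, represent the solution as $u = T^\lambda g + R_\lambda f + \tilde R_\lambda\Phi$, handle the deterministic part via \cite{MPh,Mph2}, reduce the stochastic part to an $L_p$ bound on the stochastic convolution, verify the stochastic H\"ormander condition (\ref{eq:stochHormander}) using the density estimates of Section 3, and close by approximation from the smooth dense class $\tilde{\mathbb C}^\infty_{r,p}$.

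There is, however, a genuine gap in your treatment of $p\in(1,2)$. You claim the Besov-only estimate there follows ``by duality/interpolation.'' This would not work. Interpolation only runs between $p=2$ and $p>2$, which cannot produce estimates below $p=2$. A duality argument against exponents $p'>2$ would, by the structure of the $p'>2$ bound, return the dual of \emph{both} terms $\mathbb B_{p',p'p'}^{1-1/p'}$ and $\mathbb H^{1/2}_{2,p'}$, not the single Besov term actually claimed; moreover $\tilde R_\lambda$ is a stochastic convolution and does not possess the clean adjoint that a duality argument requires. What the paper does for $p\in(1,2)$ is a direct estimate: it invokes Lemma 9 (and Remark 1) of \cite{MF}, a Kunita/Bichteler--Jacod-type moment inequality valid for $1<p<2$ for integrals against a compensated jump measure, which bounds $\mathbf E\int_0^T|Q(t,\cdot)|^p_{L_p}\,dt$ by $C\,\mathbf E I_2$ alone, with no square-function (Itô-isometry) term $\mathbf E I_1$ appearing at all. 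Then $\mathbf E I_2$ is controlled by the \emph{deterministic} parabolic regularity estimate for $L^\mu B^\lambda_{t}$ from Proposition 1 of \cite{Mph2}, applied pointwise in $\nu$ and integrated against $\Pi(d\nu)$, producing the $\mathbb B^{\mu,N;1-1/p}_{p,pp}$ norm. (The same deterministic bound supplies the Besov term for $p\geq 2$ as well.)

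A related, smaller misattribution: you describe the $\mathbb B_{p,pp}$ term as ``a Burkholder-type term for the large-jump/$L_p$ part.'' The Burkholder/Kunita inequality is only what \emph{splits} the $p$th moment into the two contributions $I_1$ (square function) and $I_2$ ($p$th power of jumps); the Besov exponent $1-1/p$ in the bound for $I_2$ comes from the deterministic parabolic $L_p$-regularity of the semigroup, not from the stochastic inequality itself. The H\"ormander/Calder\'on--Zygmund machinery acts only on $I_1$, yielding the $\mathbb H^{\mu;s+1/2}_{2,p}$ term, which is why that term disappears when $p<2$.
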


\begin{remark}
\label{re0}1. Assumptions \textbf{D}$\left( \kappa ,l\right) ,$ \textbf{B}$%
\left( \kappa ,l\right) $ hold for both, $\pi ,\mu $, means that $\kappa ,l,$
and the parameters $\alpha _{1},\alpha _{2},n_{0},c_{1},N_{0}$ are the same (%
$\mu ^{0}$ could be different).

2. For every $\varepsilon >0$, $B_{pp}^{\mu ,N;s+\varepsilon }\left( \mathbf{%
R}^{d}\right) $ is continuously embedded into $H_{p}^{\mu ;s}\left( \mathbf{R%
}^{d}\right) ,p>1$; for $p\geq 2,$ $H_{p}^{\mu ;s}\left( \mathbf{R}%
^{d}\right) $\text{ is continuously embedded into $B_{pp}^{\mu ,N;s}\left( 
\mathbf{R}^{d}\right) .$ }
\end{remark}

\subsection{Examples}

Let $\Lambda \left( dt\right) $ be a measure on $\left( 0,\infty \right) $
such that $\int_{0}^{\infty }\left( 1\wedge t\right) \Lambda \left(
dt\right) <\infty $, and let 
\begin{equation*}
\phi \left( r\right) =\int_{0}^{\infty }\left( 1-e^{-rt}\right) \Lambda
\left( dt\right) ,r\geq 0,
\end{equation*}%
be a Bernstein function (see \cite{vo}, \cite{KK1}). Let 
\begin{equation*}
j\left( r\right) =\int_{0}^{\infty }\left( 4\pi t\right) ^{-\frac{d}{2}}\exp
\left( -\frac{r^{2}}{4t}\right) \Lambda \left( dt\right) ,r>0.
\end{equation*}%
We consider $\pi \in ${$\mathfrak{A}$}$=\cup _{\sigma \in \left( 0,2\right)
} ${$\mathfrak{A}$}$^{\sigma }\,\ $defined in radial and angular coordinates 
$r=\left\vert y\right\vert ,w=y/r,$ as 
\begin{equation}
\pi \left( \Gamma \right) =\int_{0}^{\infty }\int_{\left\vert w\right\vert
=1}\chi _{\Gamma }\left( rw\right) a\left( r,w\right) j\left( r\right)
r^{d-1}S\left( dw\right) dr,\Gamma \in \mathcal{B}\left( \mathbf{R}%
_{0}^{d}\right) ,  \label{fe1}
\end{equation}%
where $S\left( dw\right) $ is a finite measure on the unite sphere on $%
\mathbf{R}^{d}$. If $S\left( dw\right) =dw$ is the Lebesgue measure on the
unit sphere, then 
\begin{equation*}
\pi \left( \Gamma \right) =\pi ^{J,a}\left( \Gamma \right) =\int_{\mathbf{R}%
^{d}}\chi _{\Gamma }\left( y\right) a\left( \left\vert y\right\vert
,y/\left\vert y\right\vert \right) J\left( y\right) dy,\Gamma \in \mathcal{B}%
\left( \mathbf{R}_{0}^{d}\right) ,
\end{equation*}%
where $J\left( y\right) =j\left( \left\vert y\right\vert \right) ,y\in 
\mathbf{R}^{d}.$ Let $\mu =\pi ^{J,1},$ i.e., 
\begin{equation}
\mu \left( \Gamma \right) =\int_{\mathbf{R}^{d}}\chi _{\Gamma }\left(
y\right) J\left( y\right) dy,\Gamma \in \mathcal{B}\left( \mathbf{R}%
_{0}^{d}\right) .  \label{fe2}
\end{equation}

We assume

\textbf{H. }(i) \emph{There is} $N>0$ \emph{so that} 
\begin{equation*}
N^{-1}\phi\left(r^{-2}\right)r^{-d}\leq j\left(r\right)\leq
N\phi\left(r^{-2}\right)r^{-d},r>0.
\end{equation*}

(ii) \emph{There are} $0<\delta_{1}\leq\delta_{2}\leq1$ \emph{and }$N>0$%
\emph{\ so that for} $0<r\leq R$ 
\begin{equation*}
N^{-1}\left(\frac{R}{r}\right)^{\delta_{1}}\leq\frac{\phi\left(R\right)}{%
\phi\left(r\right)}\leq N\left(\frac{R}{r}\right)^{\delta_{2}}.
\end{equation*}

\textbf{G. }\emph{There is} $\rho_{0}\left(w\right)\geq0,\left\vert
w\right\vert =1,$ \emph{such that} $\rho_{0}\left(w\right)\leq
a\left(r,w\right)\leq1,r>0,\left\vert w\right\vert =1,$ \emph{and for all} $%
\left\vert \xi\right\vert =1,$ 
\begin{equation*}
\int_{\left\vert w\right\vert =1}\left\vert \xi\cdot w\right\vert
^{2}\rho_{0}\left(w\right)S\left(dw\right)\geq c>0
\end{equation*}
\emph{for some} $c>0$.

For example, in \cite{vo} and \cite{KK1} among others the following specific
Bernstein functions satisfying \textbf{H} are listed:

(0) $\phi\left(r\right)=\sum_{i=1}^{n}r^{\alpha_{i}},\alpha_{i}\in\left(0,1%
\right),i=1,\ldots,n;$

(1) $\phi\left(r\right)=\left(r+r^{\alpha}\right)^{\beta},\alpha,\beta\in%
\left(0,1\right);$

(2) $\phi\left(r\right)=r^{\alpha}\left(\ln\left(1+r\right)\right)^{\beta},%
\alpha\in\left(0,1\right),\beta\in\left(0,1-\alpha\right);$

(3) $\phi\left(r\right)=\left[\ln\left(\cosh\sqrt{r}\right)\right]%
^{\alpha},\alpha\in\left(0,1\right).$

All the assumptions of Theorem \ref{t1} hold under \textbf{H, G.}

Indeed, \textbf{H} implies that there are $0<c\leq C$ so that 
\begin{eqnarray*}
cr^{-d-2\delta _{1}} &\leq &j\left( r\right) \leq Cr^{-d-2\delta _{2}},r\leq
1, \\
cr^{-d-2\delta _{2}} &\leq &j\left( r\right) \leq Cr^{-d-2\delta _{1}},r>1.
\end{eqnarray*}%
Hence $2\delta _{1}\leq \sigma \leq 2\delta _{2}.$ In this case $\kappa
\left( R\right) =j\left( R\right) ^{-1}R^{-d},R>0,$ is a scaling function,
and $\kappa \left( \varepsilon R\right) \leq l\left( \varepsilon \right)
\kappa \left( R\right) ,\varepsilon ,R>0,$ with 
\begin{equation*}
l\left( \varepsilon \right) =\left\{ 
\begin{array}{cc}
C_{1}\varepsilon ^{2\delta _{1}} & \text{if }\varepsilon \leq 1, \\ 
C_{1}\varepsilon ^{2\delta _{2}} & \text{if }\varepsilon >1%
\end{array}%
\right.
\end{equation*}%
for some $C_{1}>0$. Hence 
\begin{equation*}
\gamma \left( t\right) =l^{-1}\left( t\right) =\left\{ 
\begin{array}{c}
C_{1}^{-1/2\delta _{1}}t^{1/2\delta _{1}}\text{ if }t\leq C_{1}, \\ 
C_{1}^{-1/2\delta _{2}}t^{1/2\delta _{2}}\text{ if }t>C_{1}.%
\end{array}%
\right.
\end{equation*}%
We see easily that $\alpha _{1}$ is any number $>2\delta _{2}$ and $\alpha
_{2}$ is any number $<2\delta _{1}.$ The measure $\mu ^{0}$ for $\pi $ is 
\begin{equation*}
\mu ^{0}\left( dy\right) =\mu ^{0,\pi }\left( dy\right) =c_{1}\int \chi
_{dy}\left( rw\right) \chi _{\left\{ r\leq 1\right\} }r^{-1-2\delta
_{1}}\rho _{0}\left( w\right) S\left( dw\right) dr;
\end{equation*}%
and $\mu ^{0}$ for $\mu $ is 
\begin{equation*}
\mu ^{0}\left( dy\right) =\mu ^{0,\mu }\left( dy\right) =c_{1}^{\prime }\int
\chi _{dy}\left( rw\right) \chi _{\left\{ r\leq 1\right\} }r^{-1-2\delta
_{1}}dwdr
\end{equation*}%
with some $c_{1},c_{1}^{\prime }.$ Integrability conditions easily follow.

\section{Auxiliary results}

\subsection{Approximation of input functions}

Let $V_{r}=L_{r}\left( U,\mathcal{U},\Pi \right) ,r\geq 1$, the space of $r-$%
integrable measurable functions on $U$, and $V_{0}=\mathbf{R}$. For brevity
of notation we write 
\begin{equation*}
B_{r,pp}^{s}\left( A\right) =B_{pp}^{s}\left( A;V_{r}\right) ,\hspace{1em}%
\mathbb{B}_{r,pp}^{s}\left( A\right) =\mathbb{\mathbb{B}}_{pp}^{s}\left(
A;V_{r}\right) ,
\end{equation*}%
\begin{eqnarray*}
H_{r,p}^{s}\left( A\right) &=&H_{p}^{s}\left( A;V_{r}\right) ,\hspace{1em}%
\mathbb{H}_{r,p}^{s}\left( A\right) =\mathbb{H}_{p}^{s}\left( A;V_{r}\right)
, \\
L_{r,p}\left( A\right) &=&H_{r,p}^{0}\left( A\right) ,\mathbb{L}_{r,p}\left(
A\right) =\mathbb{H}_{r,p}^{0}\left( A\right) ,
\end{eqnarray*}%
where $A=\mathbf{R}^{d}$ or $E$. We use the following equivalent norms of
Besov spaces $B_{r,pp}^{s}\left( \mathbf{R}^{d}\right) ,r=0,p,$ (see \cite%
{Mph2})%
\begin{equation*}
|v|_{\tilde{B}_{r,pp}^{s}(\mathbf{R}^{d})}=\left( \sum_{j=0}^{\infty }\kappa
\left( N^{-j}\right) ^{-sp}\int |\varphi _{j}\ast v|_{V_{r}\ }^{p}dx\right)
^{1/p},
\end{equation*}%
where $\varphi _{j}=\varphi _{j}^{N},j\geq 0,$ is the system of functions
defined in Remark \ref{re1}. The equivalent norms of $H_{r,p}^{s}\left( 
\mathbf{R}^{d}\right) ,r=0,2,$ (see \cite{Mph2}) are defined by%
\begin{equation}
|v|_{\tilde{H}_{r,p}^{s}(\mathbf{R}^{d})}=|v|_{\tilde{H}_{r,p}^{\kappa ,N;s}(%
\mathbf{R}^{d})}=\left\vert \left( \sum_{j=0}^{\infty }\left\vert \kappa
\left( N^{-j}\right) ^{-s}\varphi _{j}\ast v\right\vert _{V_{r}}^{2}\right)
^{1/2}\right\vert _{L_{p}\left( \mathbf{R}^{d}\right) }.
\end{equation}%
We define the equivalent norms of functions on $E$ as well:%
\begin{equation*}
|v|_{\tilde{B}_{p,pp}^{s}(E)}=\left( \int_{0}^{T}\left\vert v\left( t\right)
\right\vert _{\tilde{B}_{p,pp}^{s}(\mathbf{R}^{d})}^{p}dt\right) ^{1/p},|v|_{%
\tilde{H}_{2,p}^{s}(E)}=|v|_{\tilde{H}_{2,p}^{\kappa ,N;s}(E)}=\left(
\int_{0}^{T}\left\vert v\left( t\right) \right\vert _{\tilde{H}_{2,pp}^{s}(%
\mathbf{R}^{d})}^{p}dt\right) ^{1/p}.
\end{equation*}%
For $D=D_{r}\left( A\right) =B_{r,pp}^{s}\left( A\right) $ or $%
H_{r,p}^{s}\left( A\right) ,A=\mathbf{R}^{d},E$, we consider corresponding
equivalent norms on random function spaces $\mathbb{D}=\mathbb{D}_{r}=%
\mathbb{B}_{r,pp}^{s}\left( A\right) $ or $\mathbb{H}_{r,p}^{s}\left(
A\right) :$%
\begin{equation*}
\left\vert v\right\vert _{\mathbb{\tilde{D}}}=\left\{ \mathbf{E}\left(
\left\vert v\right\vert _{\tilde{D}}^{p}\right) \right\} ^{1/p}.
\end{equation*}

Let $U_{n}\in \mathcal{U},U_{n}\subseteq U_{n+1},n\geq 1,\cup _{n}U_{n}=U$
and $\pi \left( U_{n}\right) <\infty ,n\geq 1.$ We denote by $\mathbb{\tilde{%
C}}_{r.p}^{\infty }\left( E\right) ,1\leq p<\infty ,$ the space of all $%
\mathcal{R}\left( \mathbb{F}\right) \otimes \mathcal{B}\left( \mathbf{R}%
^{d}\right) -$measurable $V_{r}$ -valued random functions $\Phi $ on $E$
such that for every$\gamma \in \mathbf{N}_{0}^{d}$, 
\begin{equation*}
\mathbf{E}\int_{0}^{T}\sup_{x\in \mathbf{R}^{d}}\left\vert D^{\gamma }\Phi
\left( t,x\right) \right\vert _{V_{r}}^{p}dt+\mathbf{E}\left[ \left\vert
D^{\gamma }\Phi \right\vert _{L_{p}\left( E;V_{r}\right) }^{p}\right]
<\infty ,
\end{equation*}%
and $\Phi =\Phi \chi _{U_{n}}$ for some $n$ if $r=2,p.$ Similarly we define
the space $\tilde{\mathbb{C}_{r,p}^{\infty }}\left( \mathbf{R}^{d}\right) $
by replacing $\mathcal{R}\left( \mathbb{F}\right) $ and $E$ by $\mathcal{F}$
and $\mathbf{R}^{d}$ respectively in the definition of $\mathbb{\tilde{C}}%
_{r,p}^{\infty }\left( E\right) $.

\begin{lemma}
\label{lem-approximation1} Let $D\left( \kappa ,l\right) $ and $B\left(
\kappa ,l\right) $ hold for $\mu \in ${$\mathfrak{A}$}$^{\sigma }$with
scaling function $\kappa $ and scaling factor $l$. Let $U_{n}\in \mathcal{U}%
,U_{n}\subseteq U_{n+1},n\geq 1,\cup _{n}U_{n}=U$ and $\pi \left(
U_{n}\right) <\infty ,n\geq 1.$ Let $s\in \mathbf{R},p\in \left( 1,\infty
\right) $, $\Phi \in \mathbb{D}_{r,p}$, where $\mathbb{D}_{r,p}=\mathbb{D}%
_{r,p}\left( A\right) =\mathbb{B}_{r,pp}^{s}\left( A\right) $ with $r=0,p,$
or $\mathbb{D}_{r,p}=\mathbb{H}_{r,p}^{s}\left( A\right) $ with $r=0,2$, $A=%
\mathbf{R}^{d}$ or $E$. For $\Phi \in \mathbb{D}_{r,p}$ we set%
\begin{equation*}
\Phi _{n}=\sum_{j=0}^{n}\Phi \ast \varphi _{j}\chi _{U_{n}},\text{ if }%
r=2,p,~\Phi _{n}=\sum_{j=0}^{n}\Phi \ast \varphi _{j},\text{ if }r=0.
\end{equation*}%
Then there is $C>0$ so that%
\begin{equation*}
\left\vert \Phi _{n}\right\vert _{\mathbb{D}_{r,p}}\leq C\left\vert \Phi
\right\vert _{\mathbb{D}_{r,p}},\Phi \in \mathbb{D}_{r,p},n\geq 1\text{,}
\end{equation*}%
and $\left\vert \Phi _{n}-\Phi \right\vert _{\mathbb{D}_{r,p}}\rightarrow 0$
as $n\rightarrow \infty .$ Moreover, for $r=0,2,p,$ every $n$ and multiindex 
$\gamma \in \mathbf{N}_{0}^{d}$,%
\begin{eqnarray*}
\mathbf{E}\int_{0}^{T}\sup_{x}\left\vert D^{\gamma }\Phi _{n}\right\vert
_{V_{r}}^{p}dt+\left\vert D^{\gamma }\Phi _{n}\right\vert _{\mathbb{L}%
_{r,p}\left( E\right) }^{p} &<&\infty \text{ if }A=E, \\
\mathbf{E[}\sup_{x}\left\vert D^{\gamma }\Phi _{n}\right\vert
_{V_{r}}^{p}]+\left\vert D^{\gamma }\Phi _{n}\right\vert _{\mathbb{L}%
_{r,p}\left( \mathbf{R}^{d}\right) }^{p} &<&\infty \text{ if }A=\mathbf{R}%
^{d},
\end{eqnarray*}%
where
\end{lemma}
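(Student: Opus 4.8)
The plan is to work throughout with the equivalent norms $|\cdot |_{\tilde{B}_{r,pp}^{s}}$, $|\cdot |_{\tilde{H}_{r,p}^{s}}$ and their time-integrated and randomized versions $|\cdot |_{\tilde{D}_{r,p}}$, $|\cdot |_{\tilde{\mathbb{D}}_{r,p}}$ recalled above, which under $D(\kappa ,l)$ and $B(\kappa ,l)$ for $\mu $ are equivalent to the norms defining $\mathbb{D}_{r,p}$. Write $S_{n}v=\sum_{j=0}^{n}v\ast \varphi _{j}$ for the partial Littlewood--Paley sum and, when $r=p,2$, let $M_{n}$ be multiplication by $\chi _{U_{n}}$ in the $U$-variable, with $M_{n}=\mathrm{Id}$ when $r=0$; then $\Phi _{n}=M_{n}S_{n}\Phi $. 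Since $S_{n}$ acts only in $x$ and $M_{n}$ only in $u\in U$, the two commute, they commute with convolution by each $\varphi _{j}$, and they commute with the integration in $t$ and with $\mathbf{E}$. So it suffices to establish the corresponding estimates for the deterministic Littlewood--Paley blocks and then pass to the randomized norms by applying $(\mathbf{E}\,|\cdot |^{p})^{1/p}$.

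The first step is the uniform bound. From $|M_{n}v(x)|_{V_{r}}\leq |v(x)|_{V_{r}}$ and $\varphi _{j}\ast (M_{n}v)=M_{n}(\varphi _{j}\ast v)$ it follows that $M_{n}$ is a contraction on each $\tilde{\mathbb{D}}_{r,p}$. For $S_{n}$, the Fourier supports give $\varphi _{j}\ast \varphi _{l}=0$ whenever $|j-l|>2$; combined with $\Phi =\sum_{l}\Phi \ast \varphi _{l}$ this yields $\varphi _{j}\ast S_{n}\Phi =\varphi _{j}\ast \Phi $ for $j\leq n-2$, $\varphi _{j}\ast S_{n}\Phi =0$ for $j\geq n+3$, and $|\varphi _{j}\ast S_{n}\Phi |_{V_{r}}\leq C\sum_{|l-j|\leq 2}|\varphi _{l}\ast \Phi |_{V_{r}}$ for the four transitional indices, since $|\varphi _{j}|_{L_{1}}=|\mathcal{F}^{-1}\phi |_{L_{1}}$ for $j\geq 1$. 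Because the scaling inequality $\kappa (\varepsilon r)\leq l(\varepsilon )\kappa (r)$ makes $\kappa (N^{-j})$ and $\kappa (N^{-l})$ comparable (with constant $l(N^{2})$) for $|l-j|\leq 2$, the $\ell ^{p}$-sum, resp. the $\ell ^{2}$-sum inside $L_{p}$, of the blocks of $S_{n}\Phi $ is dominated by a constant multiple of that of $\Phi $; hence $|S_{n}\Phi |_{\tilde{\mathbb{D}}_{r,p}}\leq C|\Phi |_{\tilde{\mathbb{D}}_{r,p}}$ uniformly in $n$, and applying $M_{n}$ gives $|\Phi _{n}|_{\mathbb{D}_{r,p}}\leq C|\Phi |_{\mathbb{D}_{r,p}}$.

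For the convergence I would split $\Phi _{n}-\Phi =M_{n}(S_{n}\Phi -\Phi )+(M_{n}\Phi -\Phi )$. For the first summand, $|M_{n}(S_{n}\Phi -\Phi )|_{\mathbb{D}_{r,p}}\leq C|S_{n}\Phi -\Phi |_{\mathbb{D}_{r,p}}$, and the block description above shows that $|S_{n}\Phi -\Phi |_{\tilde{D}_{r,p}}^{p}$ is bounded by a constant multiple of a tail of the convergent series (resp. $\ell ^{2}$-tail) defining $|\Phi |_{\tilde{D}_{r,p}}^{p}$; hence it tends to $0$ for a.e. $\omega $ and a.e. $t$ while remaining dominated by $C|\Phi |_{\tilde{D}_{r,p}}^{p}$, and dominated convergence gives $|M_{n}(S_{n}\Phi -\Phi )|_{\mathbb{D}_{r,p}}\rightarrow 0$. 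For the second summand, which is $0$ when $r=0$, we have $M_{n}\Phi -\Phi =-\Phi \chi _{U\setminus U_{n}}$; since $(\varphi _{j}\ast \Phi )(x)\in V_{r}=L_{r}(U,\Pi )$, $\Pi $ is $\sigma$-finite, and $\cup _{n}U_{n}=U$, we get $|(\varphi _{j}\ast \Phi )(x)\chi _{U\setminus U_{n}}|_{V_{r}}\rightarrow 0$ for every $x$ and $j$, dominated by $|(\varphi _{j}\ast \Phi )(x)|_{V_{r}}$; repeated applications of dominated convergence (in $u$, then over the Littlewood--Paley index, $x$ and $t$, then in $\omega $) give $|M_{n}\Phi -\Phi |_{\mathbb{D}_{r,p}}\rightarrow 0$.

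Finally, for the smoothness of $\Phi _{n}$, fix $n$, $\gamma $, and $r\in \{0,2,p\}$. Each $\varphi _{j}\ast \Phi $ with $j\leq n$ is band-limited, and from $\varphi _{j}=\tilde{\varphi}_{j}\ast \varphi _{j}$ we get $D^{\gamma }(\varphi _{j}\ast \Phi )=(D^{\gamma }\tilde{\varphi}_{j})\ast (\varphi _{j}\ast \Phi )$, where scaling gives $|D^{\gamma }\tilde{\varphi}_{j}|_{L_{1}}=N^{j|\gamma |}|D^{\gamma }\tilde{\varphi}|_{L_{1}}$ and $|D^{\gamma }\tilde{\varphi}_{j}|_{L_{p^{\prime }}}=N^{j(|\gamma |+d/p)}|D^{\gamma }\tilde{\varphi}|_{L_{p^{\prime }}}$ with $p^{\prime }=p/(p-1)$. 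Combined with $|\varphi _{j}\ast \Phi (t,\cdot )|_{L_{p}(\mathbf{R}^{d},V_{r})}\leq \kappa (N^{-j})^{s}|\Phi (t,\cdot )|_{\tilde{D}_{r,p}(\mathbf{R}^{d})}$, read off directly from the definition of the norm, Young's and H\"{o}lder's inequalities bound $\sup_{x}|D^{\gamma }(\varphi _{j}\ast \Phi )(t,x)|_{V_{r}}+|D^{\gamma }(\varphi _{j}\ast \Phi )(t,\cdot )|_{L_{p}(\mathbf{R}^{d},V_{r})}$ by $C(n,\gamma )\,|\Phi (t,\cdot )|_{\tilde{D}_{r,p}(\mathbf{R}^{d})}$ for $j\leq n$. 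Summing over $j\leq n$ and applying $M_{n}$, which only decreases the $V_{r}$-norm and makes $\Phi _{n}=\Phi _{n}\chi _{U_{n}}$, gives the same bound for $\Phi _{n}$; integrating in $t$ and taking $\mathbf{E}$ yields the asserted finiteness, so $\Phi _{n}\in \tilde{\mathbb{C}}_{r,p}^{\infty }(E)$, and the case $A=\mathbf{R}^{d}$ is identical. I expect the only points needing genuine care to be the transitional Littlewood--Paley blocks in the uniform bound for $S_{n}$ and the layering of the dominated-convergence arguments for the $V_{r}$-valued functions; the remaining estimates are routine.
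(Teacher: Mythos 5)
Your proposal takes essentially the same route as the paper. The decomposition $\Phi _{n}-\Phi =M_{n}(S_{n}\Phi -\Phi )+(M_{n}\Phi -\Phi )$ is exactly the paper's split between $\tilde{\Phi}_{n}=\Phi \chi _{U_{n}}$ and $\Phi _{n}$, and you identify the same fact that $\varphi _{j}\ast S_{n}\Phi $ agrees with $\varphi _{j}\ast \Phi $ for $j\lesssim n$, vanishes for $j\gtrsim n$, and leaves only a few transitional blocks; the smoothness argument via $D^{\gamma }\tilde{\varphi}_{k}\ast (\varphi _{k}\ast \Phi )$, Young and H\"{o}lder is the paper's argument verbatim. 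You are also more explicit than the paper about the dominated-convergence step for $M_{n}\Phi -\Phi $, which the paper leaves implicit; that is a genuine improvement in presentation.

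One point deserves sharpening. For the transitional blocks you write
\begin{equation*}
|\varphi _{j}\ast S_{n}\Phi |_{V_{r}}\leq C\sum_{|l-j|\leq 2}|\varphi _{l}\ast \Phi |_{V_{r}},
\end{equation*}
justified by the uniform $L_{1}$-bound on $\varphi _{j}$. As a pointwise ($x$-by-$x$) estimate in $V_{r}$ this is not correct, since $\varphi _{j}\ast \varphi _{l}\ast \Phi (x)$ involves $\varphi _{l}\ast \Phi $ at shifted points; the correct pointwise bound is $|\varphi _{j}\ast S_{n}\Phi (x)|_{V_{r}}\leq \sum_{|l-j|\leq 2}(|\varphi _{j}|\ast |\varphi _{l}\ast \Phi |_{V_{r}})(x)$. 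For the Besov (outer $L_{p}$, then $\ell ^{p}$) norms, taking $L_{p}$-norms and applying Young to each term makes your estimate rigorous. But for the $\mathbb{H}_{r,p}^{s}$ norms, where the $\ell ^{2}$-sum sits \emph{inside} $L_{p}$, passing from the pointwise convolution bound to the norm inequality requires the vector-valued boundedness of the family of convolution operators on $L_{p}(\ell ^{2}(V_{r}))$, i.e.\ a Fefferman--Stein-type maximal inequality or a vector-valued Fourier multiplier theorem. The paper obtains exactly this from Corollary~2 of \cite{Mph2}; your write-up asserts the conclusion (``the $\ell ^{2}$-sum inside $L_{p}$ is dominated'') but the justification given only covers the scalar $L_{1}$-Young part. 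This is a small but real gap; once it is patched by citing the appropriate $L_{p}(\ell ^{2})$-boundedness, the rest of your argument is complete and coincides with the paper's.
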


\begin{proof}
Let $\tilde{\Phi}_{n}=\Phi \chi _{U_{n}},n\geq 1.$ Since%
\begin{equation*}
\varphi _{k}=\sum_{l=-1}^{1}\varphi _{k+l}\ast \varphi _{k},k\geq 1,\varphi
_{0}=\left( \varphi _{0}+\varphi _{1}\right) \ast \varphi _{0},
\end{equation*}%
we have for $n>1,$%
\begin{eqnarray*}
\left( \tilde{\Phi}_{n}-\Phi _{n}\right) \ast \varphi _{k} &=&0,k<n, \\
\left( \tilde{\Phi}_{n}-\Phi _{n}\right) \ast \varphi _{k} &=&\left( \tilde{%
\Phi}_{n}\ast \varphi _{k-1}+\tilde{\Phi}_{n}\ast \varphi _{k}+\tilde{\Phi}%
_{n}\ast \varphi _{k+1}\right) \ast \varphi _{k},k>n+1, \\
\left( \tilde{\Phi}_{n}-\Phi _{n}\right) \ast \varphi _{n} &=&\left( \tilde{%
\Phi}_{n}\ast \varphi _{n+1}\right) \ast \varphi _{n}, \\
\left( \tilde{\Phi}_{n}-\Phi _{n}\right) \ast \varphi _{n+1} &=&\left( 
\tilde{\Phi}_{n}\ast \varphi _{n+1}+\tilde{\Phi}_{n}\ast \varphi
_{n+2}\right) \ast \varphi _{n+1}.
\end{eqnarray*}%
Let $V_{r}=L_{r}\left( U,\mathcal{U},\Pi \right) ,r=2,p$. By Corollary 2 in 
\cite{Mph2}, there is a constant $C$ independent of $\Phi \in \mathbb{H}%
_{2,p}^{s}\left( E\right) $ so that%
\begin{eqnarray*}
&&\left\vert \left( \sum_{j=0}^{\infty }\left\vert \kappa \left(
N^{-j}\right) ^{-s}\left( \tilde{\Phi}_{n}-\Phi _{n}\right) \ast \varphi
_{j}\right\vert _{V_{2}}^{2}\right) ^{1/2}\right\vert _{L_{p}\left( E\right)
} \\
&\leq &C\left\vert \left( \sum_{j=n}^{\infty }\left\vert \kappa \left(
N^{-j}\right) ^{-s}\Phi \ast \varphi _{j}\right\vert _{V_{2}}^{2}\right)
^{1/2}\right\vert _{L_{p}\left( E\right) }\rightarrow 0
\end{eqnarray*}%
as $n\rightarrow \infty .$ Obviously 
\begin{eqnarray*}
\left\vert \left( \tilde{\Phi}_{n}-\Phi _{n}\right) \ast \varphi
_{j}\right\vert _{L_{r,p}\left( E\right) } &\leq
&C\sum_{k=j-1}^{j+1}\left\vert \tilde{\Phi}_{n}\ast \varphi _{k}\right\vert
_{L_{r,p}\left( E\right) },j\geq n, \\
\left\vert \left( \tilde{\Phi}_{n}-\Phi _{n}\right) \ast \varphi
_{j}\right\vert _{L_{r,p}\left( E\right) } &=&0,j<n,r=0,p,
\end{eqnarray*}%
and 
\begin{equation*}
\left\vert \Phi _{n}\right\vert _{\mathbb{D}_{r,p}\left( E\right) }\leq
C\left\vert \Phi \right\vert _{\mathbb{D}_{r,p}\left( E\right) },\Phi \in 
\mathbb{D}_{r,p},n\geq 1,r=0,2,p\text{.}
\end{equation*}%
Thus $\left\vert \Phi _{n}-\Phi \right\vert _{\mathbb{D}_{r,p}\left(
E\right) }\rightarrow 0$ as $n\rightarrow \infty ,r=0,2,p.$

Let $r=0,2,p$, $\Phi \in \mathbb{D}_{r,p}\left( E\right) $. Obviously, for
any $k\geq 0,$%
\begin{equation*}
\mathbf{E}\int_{E}\left\vert \Phi \ast \varphi _{k}\right\vert
_{V_{r}}^{p}dxdt<\infty ,
\end{equation*}%
where $r=0,2,p$ with $V_{0}=\mathbf{R}.$. Since for any multiindex $\gamma ,$%
\begin{equation*}
\Phi \ast \varphi _{k}=\Phi \ast \varphi _{k}\ast \tilde{\varphi}%
_{k},D^{\gamma }\Phi \ast \varphi _{k}=\Phi \ast \varphi _{k}\ast D^{\gamma }%
\tilde{\varphi}_{k},
\end{equation*}%
and $\mathbf{P}$-a.s. for all $s,x,$ with $\frac{1}{q}+\frac{1}{p}=1,$%
\begin{eqnarray*}
\left\vert D^{\gamma }\Phi \ast \varphi _{k}\left( s,x\right) \right\vert
_{V_{r}} &\leq &\int \left\vert \Phi \ast \varphi _{k}\left( s,x-y\right)
\right\vert _{V_{r}}\left\vert D^{\gamma }\tilde{\varphi}_{k}\left( y\right)
\right\vert dy, \\
\sup_{x}\left\vert D^{\gamma }\Phi \ast \varphi _{k}\left( s,x\right)
\right\vert _{V_{r}} &\leq &\left( \int \left\vert \Phi \ast \varphi
_{k}\left( s,\cdot \right) \right\vert _{V_{r}}^{p}dx\right)
^{1/p}\left\vert D^{\gamma }\tilde{\varphi}_{k}\right\vert _{L_{q}\left( 
\mathbf{R}^{d}\right) },
\end{eqnarray*}%
we have for any multiindex $\gamma ,$%
\begin{equation*}
\left\vert D^{\gamma }\Phi \ast \varphi _{k}\right\vert _{\mathbb{L}%
_{r,p}\left( E\right) }<\infty ,
\end{equation*}%
and%
\begin{equation*}
\mathbf{E}\int_{0}^{T}\sup_{x}\left\vert D^{\gamma }\Phi \ast \varphi
_{k}\right\vert _{V_{r}}^{p}dt<\infty ,r=0,2,p.
\end{equation*}%
The proof for the case of $A=\mathbf{R}^{d}$ is a repeat with obvious changes%
$.$ The statement follows.
\end{proof}

\begin{corollary}
\label{cor:approx1}The space $C_{0}^{\infty }\left( \mathbf{R}%
^{d};V_{r}\right) $ of $V_{r}$-valued infinitely differentiable functions
with compact support is dense in $D_{r}\left( \mathbf{R}^{d}\right) ,r=0,2,p$%
.
\end{corollary}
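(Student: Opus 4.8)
The plan is to reduce the assertion to the definition of the spaces together with a single elementary spatial truncation. Since $D_{r}(\mathbf{R}^{d})$ is by definition the closure of $\mathcal{S}(\mathbf{R}^{d},V_{r})$ in its norm, and $C_{0}^{\infty}(\mathbf{R}^{d};V_{r})\subseteq\mathcal{S}(\mathbf{R}^{d},V_{r})$, it suffices to show that an arbitrary $v\in\mathcal{S}(\mathbf{R}^{d},V_{r})$ is the $D_{r}$-limit of a sequence in $C_{0}^{\infty}(\mathbf{R}^{d};V_{r})$. I would fix $\eta\in C_{0}^{\infty}(\mathbf{R}^{d})$ with $\eta\equiv1$ near the origin, set $v_{m}(x)=\eta(x/m)v(x)\in C_{0}^{\infty}(\mathbf{R}^{d};V_{r})$, and claim $v_{m}\to v$ in $D_{r}(\mathbf{R}^{d})$. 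Writing $v_{m}-v=(\eta(\cdot/m)-1)v$ and expanding by Leibniz: the term with no derivative on $\eta(\cdot/m)$ is supported in $\{|x|\ge m\}$, where $|x^{\alpha}D^{\beta}v(x)|_{V_{r}}\le|x|^{-1}\sup_{z}|z|^{|\alpha|+1}|D^{\beta}v(z)|_{V_{r}}$, and every term carrying a derivative of $\eta(\cdot/m)$ gains a factor $m^{-1}$ and is supported in a shell $\{|x|\sim m\}$ where $v$ decays rapidly; hence $v_{m}\to v$ in the Schwartz topology of $\mathcal{S}(\mathbf{R}^{d},V_{r})$. So the whole statement follows once one knows $\mathcal{S}(\mathbf{R}^{d},V_{r})\hookrightarrow D_{r}(\mathbf{R}^{d})$ continuously, i.e. that $|w|_{D_{r}(\mathbf{R}^{d})}$ is dominated by a fixed finite family of Schwartz seminorms of $w$.

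This continuous embedding is the only part with real content, and it is where I expect the main difficulty to lie. Using the equivalent norms $|\cdot|_{\tilde{B}_{r,pp}^{s}}$, $|\cdot|_{\tilde{H}_{r,p}^{s}}$ recalled before Lemma \ref{lem-approximation1}, one has to bound $\sum_{j\ge0}\kappa(N^{-j})^{-sp}|\varphi_{j}\ast w|_{L_{p}(\mathbf{R}^{d},V_{r})}^{p}$ and (using $\ell^{2}\hookrightarrow\ell^{1}$) its square-function analogue by seminorms of $w$. For $j\ge1$ all moments of $\varphi_{j}$ vanish, since $\phi$ is supported away from $0$; subtracting from $w$ its Taylor polynomial at $x$ of a large order $K$ then gives $|\varphi_{j}\ast w(x)|_{V_{r}}\le C_{K}N^{-j(K+1)}\sup_{z}|D^{K+1}w(z)|_{V_{r}}$, while splitting the convolution at $|x-y|=|x|/2$ yields the additional bound $|\varphi_{j}\ast w(x)|_{V_{r}}\le C_{M}N^{j(d-M)}\langle x\rangle^{-M}$ times a seminorm of $w$ for $|x|\ge1$ (the block $j=0$ is treated the same way using $\varphi_{0}\in\mathcal{S}$). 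So each block decays rapidly in $x$ and in $j$. The remaining point is the growth of the weights $\kappa(N^{-j})^{-sp}$ for $s>0$; but \textbf{D}$\left(\kappa,l\right)$ and \textbf{A}$_{0}\left(\sigma\right)$ force a lower bound $\kappa(R)\ge c_{*}/\mu(\{|y|\le R\})$, and, since $\mu\in\mathfrak{A}^{\sigma}$ gives $\mu(\{|y|\le R\})\le C_{\delta}R^{-\sigma-\delta}$ for $R\le1$ and every $\delta>0$, one gets the polynomial lower bound $\kappa(N^{-j})\ge cN^{-j(\sigma+\delta)}$; choosing $K>s(\sigma+\delta)$ makes the series converge, the total bound being a finite combination of Schwartz seminorms of $w$.

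An alternative, closer in spirit to the material around Lemma \ref{lem-approximation1}, is to first use that lemma to replace $v$ by $v_{n}=\sum_{j=0}^{n}\varphi_{j}\ast v$ (which converges to $v$ in $D_{r}$ and has Fourier transform in a fixed ball), reducing to band-limited functions, and then truncate by $\eta(\cdot/m)$: for band-limited functions only finitely many Littlewood--Paley blocks are essential, and the tail blocks of $\eta(\cdot/m)v_{n}$ decay rapidly in $j$ uniformly in $m$, which is visible directly on the Fourier side. Either way, the crux is the same: that multiplication by the rescaled bumps $\eta(\cdot/m)$ acts on $D_{r}(\mathbf{R}^{d})$ with operator norm bounded uniformly in $m$ — a mapping property of these spaces of generalized smoothness, which can also be quoted from the paraproduct estimates in \cite{Mph2}.
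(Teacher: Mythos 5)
Your proposal follows essentially the same route as the paper. The paper likewise truncates by $v_{n}(x)=v(x)g(x/n)$, uses that $\int y^{\beta}\varphi_{j}(y)\,dy=0$ for $j\ge 1$ together with the Taylor remainder to extract the factor $N^{-j(m+1)}$, and offsets the weights via the polynomial bound $\kappa(N^{-j})^{-s}\le N^{js\sigma'}$ before choosing $m$ large enough for geometric decay. The only organizational difference is that the paper first invokes Lemma~\ref{lem-approximation1} to pass to a $v$ all of whose derivatives are bounded and $L_{p}$-integrable, and then reads off the $L_{p}$ bound on $v_{n}\ast\varphi_{j}$ by Minkowski's inequality, rather than packaging the core estimate as a continuous embedding $\mathcal{S}(\mathbf{R}^{d},V_{r})\hookrightarrow D_{r}(\mathbf{R}^{d})$ together with Schwartz convergence of the truncations; the two formulations are logically equivalent, and your alternative closing paragraph is even closer to what the paper actually does.

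There is, however, one step in your sketch that fails as written: the derivation of the polynomial lower bound on $\kappa$. You combine $\kappa(R)\ge c_{*}/\mu(\{|y|\le R\})$ (from \textbf{D}$(\kappa,l)$) with $\mu(\{|y|\le R\})\le C_{\delta}R^{-\sigma-\delta}$. But for $\mu\in\mathfrak{A}^{\sigma}$ the quantity $\mu(\{|y|\le R\})$ is generally infinite (already for $\mu(dy)=dy/|y|^{d+\sigma}$), so the first inequality is vacuous and the second is false. The bound you need is nonetheless true, and the paper simply cites it as Lemma~6 of \cite{Mph2}. A correct direct derivation uses a second moment instead of the raw mass: from \textbf{D}$(\kappa,l)$ and the nondegeneracy in \textbf{A}$_{0}(\sigma)$,
\begin{equation*}
\kappa(R)\int_{|y|\le 1}|y|^{2}\,\mu_{R}(dy)\;\ge\;\int_{|y|\le 1}|y|^{2}\,\mu^{0}(dy)\;\ge\;c>0,
\end{equation*}
while for $R\le 1$ and any $\alpha\in(\sigma,2)$,
\begin{equation*}
\int_{|y|\le 1}|y|^{2}\,\mu_{R}(dy)=R^{-2}\int_{|y|\le R}|y|^{2}\,\mu(dy)\le R^{-\alpha}\int_{|y|\le 1}|y|^{\alpha}\,\mu(dy)<\infty,
\end{equation*}
so $\kappa(R)\ge cR^{\alpha}$ for $R\le 1$ and hence $\kappa(N^{-j})^{-s}\le CN^{js\alpha}$ for $s>0$, $j\ge 0$. (A minor slip as well: in passing from the square-function norm of $\tilde{H}^{s}_{r,p}$ to a sum of block norms you need $\ell^{1}\hookrightarrow\ell^{2}$, not the reverse.)
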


\begin{proof}
In the view of Lemma \ref{lem-approximation1}, it suffices to show that for
any $V=V_{r}$-valued function $v$ such that for all multiindex $\gamma \in 
\mathbf{N}_{0}^{d}$, 
\begin{equation*}
\sup_{x}\left\vert D^{\gamma }v\left( x\right) \right\vert
_{V_{r}}+\left\vert D^{\gamma }v\right\vert _{L_{p}\left( \mathbf{R}%
^{d};V_{r}\right) }<\infty
\end{equation*}

there exists $v_{n}\in C_{0}^{\infty }\left( \mathbf{R}^{d},V_{r}\right) $
so that $v_{n}\rightarrow v$ in $D_{r}\left( \mathbf{R}^{d}\right) $. Let $%
g\in C_{0}^{\infty }\left( \mathbf{R}^{d}\right) $ with $0\leq g\left(
x\right) \leq 1,x\in \mathbf{R}^{d}$, $g\left( x\right) =1$ for $\left\vert
x\right\vert \leq 1$, and $g\left( x\right) =0$ for $\left\vert x\right\vert
\geq 2$. Let 
\begin{equation*}
v_{n}\left( x\right) :=v\left( x\right) g\left( x/n\right) ,x\in \mathbf{R}%
^{d}.
\end{equation*}%
Obviously $v_{n}\in C_{0}^{\infty }\left( \mathbf{R}^{d},V_{r}\right) $, and
for any multiindex $\beta ,$

\begin{eqnarray*}
D^{\beta }v_{n}\left( x\right) &=&D^{\beta }v\left( x\right) g\left(
x/n\right) +\sum_{\substack{ \beta _{1}+\beta _{2}=\beta ,  \\ \left\vert
\beta _{2}\right\vert \geq 1}}n^{-\left\vert \beta _{2}\right\vert }D^{\beta
_{1}}v\left( x\right) \left( D^{\beta _{2}}g\right) \left( x/n\right) ,x\in 
\mathbf{R}^{d}, \\
\left\vert D^{\beta }v_{n}\right\vert _{Lp\left( \mathbf{R}^{d};V_{r}\right)
} &\leq &C\left( \left\vert \beta \right\vert \right) \sup_{\beta ^{\prime
}\leq \left\vert \beta \right\vert }\left\vert D^{\beta ^{\prime
}}v\right\vert _{Lp\left( \mathbf{R}^{d};V_{r}\right) },
\end{eqnarray*}%
and $\left\vert D^{\beta }v_{n}-D^{\beta }v\right\vert _{Lp\left( \mathbf{R}%
^{d};V_{r}\right) }\rightarrow 0$. Since for any multiindex $\beta $ we have$%
\int y^{\beta }\varphi _{j}\left( y\right) dy=0,$ it follows for $m>0,j\geq
1 $, by Taylor remainder theorem, for $x\in \mathbf{R}^{d},$

\begin{eqnarray*}
v_{n}\ast \varphi _{j}\left( x\right) &=&\int \varphi _{j}\left( y\right)
\left\{ v_{n}\left( x-y\right) -\sum_{\beta :\left\vert \beta \right\vert
\leq m}\frac{D^{\beta }v_{n}\left( x\right) }{\beta !}\left( -y\right)
^{\beta }\right\} dy \\
&=&\int \varphi _{j}\left( y\right) \sum_{\beta :\left\vert \beta
\right\vert =m+1}\int_{0}^{1}\frac{\left( 1-t\right) ^{m+1}}{\left(
m+1\right) !}\left( D^{\beta }v_{n}\right) \left( x-ty\right) \left(
-y\right) ^{\beta }dtdy \\
&=&N^{-j(m+1)}\sum_{\beta :\left\vert \beta \right\vert =m+1}\int \varphi
\left( y\right) \int_{0}^{1}\frac{\left( 1-t\right) ^{m+1}}{\left(
m+1\right) !}\left( D^{\beta }v_{n}\right) \left( x-tN^{-j}y\right) dt\left(
-y\right) ^{\beta }dy.
\end{eqnarray*}%
By Lemma 6 of \cite{Mph2}, there exists $\sigma ^{\prime }$ such that $%
\kappa \left( N^{-j}\right) ^{-s}\leq N^{js\sigma ^{\prime }}$. Let $m>1$ be
such that $t=N^{\sigma ^{\prime }s}N^{-m}<1$. Hence there is a constant $%
C=C\left( m\right) $ (independent of $n$) so that 
\begin{equation*}
\kappa \left( N^{-j}\right) ^{-s}\left\vert v_{n}\ast \varphi
_{j}\right\vert _{L_{p}\left( \mathbf{R}^{d};V_{2}\right) }\leq C\left(
m\right) t^{j},j\geq 0.
\end{equation*}%
Now, for any $k\geq 0,$

\begin{eqnarray*}
&&\left\vert \left( \sum_{j=k}^{\infty }\left\vert \kappa \left(
N^{-j}\right) ^{-s}v_{n}\ast \varphi _{j}\left( x\right) \right\vert
_{V_{2}}^{2}\right) ^{1/2}\right\vert _{L_{p}\left( \mathbf{R}^{d}\right) }
\\
&\leq &\left\vert \sum_{j=k}^{\infty }\left\vert \kappa \left( N^{-j}\right)
^{-s}v_{n}\ast \varphi _{j}\left( x\right) \right\vert _{V_{2}}\right\vert
_{L_{p}\left( \mathbf{R}^{d}\right) }\leq \sum_{j\geq k}\left\vert \kappa
\left( N^{-j}\right) ^{-s}v_{n}\ast \varphi _{j}\left( x\right) \right\vert
_{L_{p}\left( \mathbf{R}^{d};V_{2}\right) } \\
&\leq &C\left( m\right) \sum_{j\geq k}t^{j}.
\end{eqnarray*}

Since the same estimate holds for $v,$ 
\begin{equation*}
\left\vert \left( \sum_{j\geq k}\left\vert \kappa \left( N^{-j}\right)
^{-s}v\ast \varphi _{j}\left( x\right) \right\vert _{V_{2}}^{2}\right)
^{1/2}\right\vert _{L_{p}\left( \mathbf{R}^{d}\right) }\leq C\left( m\right)
\sum_{j\geq k}t^{j},
\end{equation*}%
and%
\begin{equation*}
\left\vert \left( \sum_{j<k}\left\vert \kappa \left( N^{-j}\right)
^{-s}(v-v_{n})\ast \varphi _{j}\left( x\right) \right\vert
_{V_{2}}^{2}\right) ^{1/2}\right\vert _{L_{p}\left( \mathbf{R}^{d}\right)
}\rightarrow 0,
\end{equation*}%
it follows that 
\begin{equation*}
\left\vert \left( \sum_{j}\left\vert \kappa \left( N^{-j}\right)
^{-s}(v-v_{n})\ast \varphi _{j}\left( x\right) \right\vert
_{V_{2}}^{2}\right) ^{1/2}\right\vert _{L_{p}\left( \mathbf{R}^{d}\right)
}\rightarrow 0
\end{equation*}%
as $n\rightarrow \infty $.

Likewise, for $r=0,p,$ 
\begin{equation*}
\lim_{n\rightarrow \infty }\left\vert v_{n}-v\right\vert
_{B_{r,pp}^{s}}=\lim_{n\rightarrow \infty }\left( \sum_{j=0}^{\infty
}\left\vert \kappa \left( N^{-j}\right) ^{-s}\left( v_{n}-v\right) \ast
\varphi _{j}\right\vert _{L_{p}\left( \mathbf{R}^{d};V_{r}\right)
}^{p}\right) ^{1/p}=0.
\end{equation*}
\end{proof}

An obvious consequence of Lemma \ref{lem-approximation1} (the form of the
approximating sequence is identical for different $V$) is the following

\begin{lemma}
\label{lem-denseSubspace} Let $p\geq 1$ and $s,s^{\prime }\in \mathbf{R}$.
Then the set $\tilde{\mathcal{\mathbb{C}}}_{p,p}^{\infty }\left( E\right) $
is a dense subset in $\mathbb{B}_{p,pp}^{s^{\prime }}\left( E\right) ,\tilde{%
\mathcal{\mathbb{C}}}_{0,p}^{\infty }\left( \mathbf{R}^{d}\right) $ is a
dense subset of $\mathbb{B}_{pp}^{s^{\prime }}\left( \mathbf{R}^{d}\right) ,$
and $\tilde{\mathcal{\mathbb{C}}}_{r,p}^{\infty }\left( E\right) $ is dense
in $\mathbb{H}_{r,p}^{s}\left( E\right) ,r=0,2.$ Moreover, the set $\tilde{%
\mathcal{\mathbb{C}}}_{2,p}^{\infty }\left( E\right) \cap \tilde{\mathcal{%
\mathbb{C}}}_{p,p}^{\infty }\left( E\right) $ is a dense subset of $\mathbb{B%
}_{p,pp}^{s^{\prime }}\left( E\right) \cap \mathbb{H}_{2,p}^{s}\left(
E\right) $.
\end{lemma}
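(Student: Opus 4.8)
The plan is to read the statement straight off Lemma \ref{lem-approximation1}: that lemma produces an explicit approximating sequence which, as I will check, already belongs to the smooth truncated classes $\tilde{\mathcal{\mathbb{C}}}_{r,p}^{\infty}$, and — crucially — the \emph{same} sequence works simultaneously for every norm appearing in the statement.

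First I would fix $\Phi$ in the space under consideration and apply Lemma \ref{lem-approximation1} with the matching choice of $\mathbb{D}_{r,p}$: $\mathbb{B}_{p,pp}^{s'}(E)$ with $r=p$, $\mathbb{B}_{pp}^{s'}(\mathbf{R}^{d})$ with $r=0$, or $\mathbb{H}_{r,p}^{s}(E)$ with $r=0,2$. This produces $\Phi_{n}=\sum_{j=0}^{n}\Phi\ast\varphi_{j}\,\chi_{U_{n}}$ (the factor $\chi_{U_{n}}$ being absent when $r=0$, since the class $\tilde{\mathcal{\mathbb{C}}}_{0,p}^{\infty}$ imposes no support condition). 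Then I would verify $\Phi_{n}\in\tilde{\mathcal{\mathbb{C}}}_{r,p}^{\infty}$: the measurability with respect to $\mathcal{R}(\mathbb{F})\otimes\mathcal{B}(\mathbf{R}^{d})$ (resp. $\mathcal{F}\otimes\mathcal{B}(\mathbf{R}^{d})$ in the $\mathbf{R}^{d}$ case) and the $V_{r}$-valuedness are inherited from $\Phi$ since convolution acts only in $x$; the identity $\Phi_{n}=\Phi_{n}\chi_{U_{n}}$ is built in; and the finiteness of $\mathbf{E}\int_{0}^{T}\sup_{x}|D^{\gamma}\Phi_{n}|_{V_{r}}^{p}\,dt+|D^{\gamma}\Phi_{n}|_{\mathbb{L}_{r,p}(E)}^{p}$ for every multiindex $\gamma$ is precisely the last (``Moreover'') assertion of Lemma \ref{lem-approximation1}. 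The convergence $|\Phi_{n}-\Phi|_{\mathbb{D}_{r,p}}\to0$ is also asserted by the same lemma, and this settles the density statements for $\mathbb{B}_{p,pp}^{s'}(E)$, $\mathbb{B}_{pp}^{s'}(\mathbf{R}^{d})$ and $\mathbb{H}_{r,p}^{s}(E)$.

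For the final (``Moreover'') claim I would take $\Phi\in\mathbb{B}_{p,pp}^{s'}(E)\cap\mathbb{H}_{2,p}^{s}(E)$ and invoke Lemma \ref{lem-approximation1} twice — once with $\mathbb{D}_{r,p}=\mathbb{B}_{p,pp}^{s'}(E)$ and once with $\mathbb{D}_{r,p}=\mathbb{H}_{2,p}^{s}(E)$ — noting that both invocations prescribe literally the same function $\Phi_{n}=\sum_{j=0}^{n}\Phi\ast\varphi_{j}\,\chi_{U_{n}}$. Hence $\Phi_{n}\in\tilde{\mathcal{\mathbb{C}}}_{p,p}^{\infty}(E)\cap\tilde{\mathcal{\mathbb{C}}}_{2,p}^{\infty}(E)$ and $\Phi_{n}\to\Phi$ in each of the two norms, so $\Phi_{n}\to\Phi$ in the intersection.

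I do not expect a genuine obstacle here — the lemma is flagged in the text as ``an obvious consequence'' — but the one bookkeeping point to handle with some care is confirming that $\Phi_{n}$ qualifies as an admissible member of $\tilde{\mathcal{\mathbb{C}}}_{r,p}^{\infty}$ in all cases at once: that the $\sup_{x}$-bounds hold for every multiindex simultaneously, and that on the finite-measure set $U_{n}$ being $V_{p}$-valued and being $V_{2}$-valued are mutually consistent requirements. Since Lemma \ref{lem-approximation1} already supplies exactly these bounds, the argument reduces to collecting its conclusions.
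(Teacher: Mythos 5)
Your proposal is correct and follows exactly the paper's line of reasoning: the paper dismisses this lemma as ``an obvious consequence of Lemma \ref{lem-approximation1} (the form of the approximating sequence is identical for different $V$),'' which is precisely the observation you develop — that $\Phi_{n}=\sum_{j=0}^{n}\Phi\ast\varphi_{j}\,\chi_{U_{n}}$ lies in the relevant $\tilde{\mathcal{\mathbb{C}}}_{r,p}^{\infty}$ class by the ``Moreover'' part of that lemma, converges in each $\mathbb{D}_{r,p}$-norm, and, being the same sequence regardless of $V_{r}$, handles the intersection $\mathbb{B}_{p,pp}^{s^{\prime}}(E)\cap\mathbb{H}_{2,p}^{s}(E)$ as well.
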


\subsection{Representation of fractional operator and some density estimates}

We will use repeatedly the following representation of the fractional
operator. Let $\mu \in \mathfrak{A}_{sym}^{\sigma }=\left\{ \eta \in 
\mathfrak{A}^{\sigma }:\eta \text{ is symmetric, }\eta =\eta _{sym}\right\}
. $ Then for $\delta \in (0,1)$ and $f\in \mathcal{S}\left( \mathbf{R}%
^{d}\right) $, we have%
\begin{eqnarray*}
&&-\left( -\psi ^{\mu }\left( \xi \right) \right) ^{\delta }\hat{f}\left(
\xi \right) \\
&=&c_{\delta }\int_{0}^{\infty }t^{-\delta }\left[ \exp \left( \psi ^{\mu
}\left( \xi \right) t\right) -1\right] \frac{dt}{t}\hat{f}\left( \xi \right)
,\xi \in \mathbf{R}^{d},
\end{eqnarray*}%
and%
\begin{eqnarray}
L^{\mu ;\delta }f\left( x\right) &=&\mathcal{F}^{-1}\left[ -\left( -\psi
^{\mu }\right) ^{\delta }\hat{f}\right] \left( x\right)  \label{ff01} \\
&=&c_{\delta }\mathbf{E}\int_{0}^{\infty }t^{-\delta }\left[ f\left(
x+Z_{t}^{\mu }\right) -f\left( x\right) \right] \frac{dt}{t},x\in \mathbf{R}%
^{d}.  \notag
\end{eqnarray}

\begin{lemma}
\label{le0}Let $\mu \in \mathfrak{A}_{sym}^{\sigma },\delta \in \left(
0,1\right) .$

a) For any $p\geq 1,$ and $\varepsilon >0$ there is $C$ so that%
\begin{equation*}
\left\vert L^{\mu ;\delta }f\right\vert _{L_{p}\left( \mathbf{R}^{d}\right)
}\leq \varepsilon \left\vert L^{\mu }f\right\vert _{L_{p}\left( \mathbf{R}%
^{d}\right) }+C\left\vert f\right\vert _{L_{p}\left( \mathbf{R}^{d}\right)
},f\in \mathcal{S}\left( \mathbf{R}^{d}\right) .
\end{equation*}

b) Let \textbf{D}$\left( \kappa ,l\right) $ and \textbf{B}$\left( \kappa
,l\right) $ hold for $\pi \in \mathfrak{A}^{\sigma }$ with scaling function $%
\kappa $ and scaling factor $l$, and 
\begin{equation*}
\int_{\left\vert y\right\vert \leq 1}\left\vert y\right\vert ^{\alpha _{1}}d%
\widetilde{\mu }_{R}+\int_{\left\vert y\right\vert >1}\left\vert
y\right\vert ^{\alpha _{2}}d\widetilde{\mu }_{R}\leq M,R>0
\end{equation*}%
($\alpha _{1},\alpha _{2}$ are exponents in \textbf{B}$\left( \kappa
,l\right) $). Let $p^{R}\left( t,x\right) =p^{\tilde{\pi}_{R}}\left(
t,x\right) ,x\in \mathbf{R}^{d}$, be the pdf of $Z_{t}^{\tilde{\pi}%
_{R}},t>0,R>0.$ Then for each $\beta \in \lbrack 0,\delta \alpha _{2})$
there is $C=C\left( \kappa ,l,N_{0},\beta \right) $ ($N_{0}$ is a constant
in \textbf{B}$\left( \kappa ,l\right) $) so that for $\left\vert
k\right\vert \leq 2,$ 
\begin{eqnarray*}
\int \left( 1+\left\vert x\right\vert ^{\beta }\right) \left\vert D^{k}L^{%
\tilde{\mu}_{R};\delta }p^{R}\left( 1,x\right) \right\vert dx &\leq &CM, \\
\int (1+\left\vert x\right\vert ^{\beta })\left\vert L^{\tilde{\mu}%
_{R};\delta }L^{\tilde{\pi}^{\ast }}p^{R}\left( 1,x\right) \right\vert dx
&\leq &CM.
\end{eqnarray*}
\end{lemma}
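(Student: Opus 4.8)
\textbf{Part a)} I would deduce this from the subordination formula (\ref{ff01}) and the fact that the transition semigroup $P_{t}^{\mu }h:=\mathbf{E}\,h(\cdot +Z_{t}^{\mu })$ is an $L_{p}(\mathbf{R}^{d})$-contraction with $P_{t}^{\mu }f-f=\int_{0}^{t}P_{s}^{\mu }L^{\mu }f\,ds$ for $f\in \mathcal{S}(\mathbf{R}^{d})$. Substituting into (\ref{ff01}), fixing $T_{0}\in (0,1]$ and splitting the $t$-integral at $T_{0}$, Minkowski's integral inequality bounds the $L_{p}$-norm of the part over $(0,T_{0}]$ by $|c_{\delta }|\int_{0}^{T_{0}}t^{-\delta }\big(\int_{0}^{t}|P_{s}^{\mu }L^{\mu }f|_{L_{p}}\,ds\big)\frac{dt}{t}\le |c_{\delta }|\,|L^{\mu }f|_{L_{p}(\mathbf{R}^{d})}\int_{0}^{T_{0}}t^{-\delta }\,dt$, and that of the part over $(T_{0},\infty )$ by $|c_{\delta }|\int_{T_{0}}^{\infty }t^{-\delta }(|P_{t}^{\mu }f|_{L_{p}}+|f|_{L_{p}})\frac{dt}{t}\le 2|c_{\delta }|\,|f|_{L_{p}(\mathbf{R}^{d})}\int_{T_{0}}^{\infty }t^{-\delta -1}\,dt$. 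Choosing $T_{0}$ so that $|c_{\delta }|(1-\delta )^{-1}T_{0}^{1-\delta }\le \varepsilon $ gives the claim with $C=2|c_{\delta }|\delta ^{-1}T_{0}^{-\delta }$.

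\textbf{Part b)} Since $L^{\tilde{\mu}_{R};\delta }$ is a Fourier multiplier it commutes with $D^{k}$ and with $L^{\tilde{\pi}^{\ast }}$, so both inequalities reduce to bounding $\|L^{\tilde{\mu}_{R};\delta }g\|_{\beta }$ uniformly in $R$, where $\|h\|_{\beta }:=\int (1+|x|^{\beta })|h(x)|\,dx$ and $g$ denotes $D^{k}p^{R}(1,\cdot )$ or $L^{\tilde{\pi}^{\ast }}p^{R}(1,\cdot )$ (the relevant finitely many $x$-derivatives of $p^{R}(1,\cdot )$, and their polynomial decay, being justified by Assumption \textbf{A}$_{0}(\sigma )$). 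I would first collect three uniform-in-$R$ ingredients: (i) from the rough density estimates for $p^{R}=p^{\tilde{\pi}_{R}}$ of \cite{MPh}, \cite{Mph2} together with \textbf{B}$(\kappa ,l)$, the bounds $\|D^{k}p^{R}(1,\cdot )\|_{\beta }+\|L^{\tilde{\pi}^{\ast }}p^{R}(1,\cdot )\|_{\beta }\le CM$ for $\beta <\alpha _{2}$ and $|k|$ up to the order needed; (ii) boundedness of $L^{\tilde{\mu}_{R}}$ on functions of this type, $\|L^{\tilde{\mu}_{R}}g\|_{\beta }\le CM$, from splitting $L^{\tilde{\mu}_{R}}\varphi $ into its $\{|y|\le 1\}$ part (Taylor expansion and $\int_{|y|\le 1}|y|^{\alpha _{1}}\,d\widetilde{\mu }_{R}\le M$) and its $\{|y|>1\}$ part (triangle inequality, the translation bound $\|\varphi (\cdot +z)\|_{\beta }\le C_{\beta }(1+|z|^{\beta })\|\varphi \|_{\beta }$, and $\int_{|y|>1}(|y|^{\beta }+\chi _{\sigma }(y)|y|)\,d\widetilde{\mu }_{R}\le 2M$, which is licit since $\beta <\alpha _{2}$ and, when $\sigma >1$, $\alpha _{2}>1$) --- exactly as one estimates $L^{\pi }$ itself; and (iii) the moment bounds $\mathbf{E}|Z_{t}^{\tilde{\mu}_{R}}|^{\beta }\le CM\,(1\vee t)^{\beta /\alpha _{2}}$, $t>0$, again extracted from the tail/density estimates of \cite{MPh}.

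Granting (i)--(iii), I would apply (\ref{ff01}) to $g$ and split $\int_{0}^{\infty }=\int_{0}^{1}+\int_{1}^{\infty }$. On $(0,1]$, using $\mathbf{E}\,g(\cdot +Z_{t}^{\tilde{\mu}_{R}})-g=\int_{0}^{t}\mathbf{E}\,(L^{\tilde{\mu}_{R}}g)(\cdot +Z_{s}^{\tilde{\mu}_{R}})\,ds$ and the translation bound with $\mathbf{E}|Z_{s}^{\tilde{\mu}_{R}}|^{\beta }\le CM$ for $s\le 1$, the $\|\cdot \|_{\beta }$-norm of this part of (\ref{ff01}) is at most $|c_{\delta }|\int_{0}^{1}t^{-\delta }\big(\int_{0}^{t}C(1+\mathbf{E}|Z_{s}^{\tilde{\mu}_{R}}|^{\beta })\|L^{\tilde{\mu}_{R}}g\|_{\beta }\,ds\big)\frac{dt}{t}\le CM\,\|L^{\tilde{\mu}_{R}}g\|_{\beta }\int_{0}^{1}t^{-\delta }\,dt<\infty $, since $\delta <1$. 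On $(1,\infty )$, bounding $|\mathbf{E}\,g(\cdot +Z_{t}^{\tilde{\mu}_{R}})-g|\le \mathbf{E}|g(\cdot +Z_{t}^{\tilde{\mu}_{R}})|+|g|$ and using the translation bound and (iii), the corresponding part has $\|\cdot \|_{\beta }$-norm at most $CM\,\|g\|_{\beta }\int_{1}^{\infty }(1\vee t)^{\beta /\alpha _{2}}t^{-\delta -1}\,dt$, and the integral is finite \emph{precisely because} $\beta <\delta \alpha _{2}$. Adding the two ranges and inserting (i)--(ii) yields $\|L^{\tilde{\mu}_{R};\delta }g\|_{\beta }\le CM$ with $C=C(\kappa ,l,N_{0},\beta )$ (tracking the dependence on the $\widetilde{\mu }_{R}$-moments), which is both asserted inequalities.

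The subordination bookkeeping just described is routine; the substantive work --- and the main obstacle --- is step (i)--(iii), i.e.\ producing the uniform-in-$R$ weighted $L_{1}$ estimates for $p^{R}$, its $x$-derivatives and $L^{\tilde{\pi}^{\ast }}p^{R}$ with the correct qualitative behaviour (boundedness near $t=1$, and $(1\vee t)^{\beta /\alpha _{2}}$ growth of the $\beta$-moments as $t\to \infty $). Once these rough density estimates are in hand from \cite{MPh}, \cite{Mph2}, the exponent restriction $\beta <\delta \alpha _{2}$ is exactly what renders the large-$t$ integral in (\ref{ff01}) convergent, and part b) follows; part a) is elementary by comparison.
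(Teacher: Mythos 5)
Your proposal is correct and follows essentially the same route as the paper: part a) via the identity (\ref{f00}) (splitting the $t$-integral and using Minkowski with the $L_p$-contraction of the semigroup), and part b) via the same splitting of (\ref{ff01}) at $t=1$, the weighted $L_1$ density/derivative estimates for $p^R$ and $L^{\tilde\mu_R}D^k p^R$ (which the paper draws from Corollary 4 and Lemma 17 of \cite{Mph2}), the $\beta$-moment bound $\mathbf{E}|Z_t^{\tilde\mu_R}|^\beta\lesssim t^{\beta/\alpha_2}$, and the observation that $\beta<\delta\alpha_2$ makes the tail integral converge. Your only deviations are cosmetic: you unify the two inequalities of b) via Fourier-multiplier commutativity and sketch a self-contained Taylor/translation derivation of $\|L^{\tilde\mu_R}g\|_\beta\le CM$ where the paper simply cites the same bound.
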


\begin{proof}
Indeed for any $a>0,f\in \mathcal{S}\left( \mathbf{R}^{d}\right) ,x\in 
\mathbf{R}^{d},$ by Ito formula and (\ref{ff01}),%
\begin{eqnarray}
L^{\mu ;\delta }f\left( x\right) &=&c\mathbf{E}\int_{0}^{a}t^{-\delta
}\int_{0}^{t}L^{\mu }f\left( x+Z_{r}^{\mu }\right) dr\frac{dt}{t}
\label{f00} \\
&&+c\mathbf{E}\int_{a}^{\infty }t^{-\delta }\left[ f\left( x+Z_{t}^{\mu
}\right) -f\left( x\right) \right] \frac{dt}{t}.  \notag
\end{eqnarray}%
The statement a) follows by Minkowski inequality.

By (\ref{f00}), for $\left\vert k\right\vert \leq 2,$ 
\begin{eqnarray*}
&&\int \left( 1+\left\vert x\right\vert ^{\beta }\right) \left\vert L^{\mu
;\delta }D^{k}p^{R}\left( 1,x\right) \right\vert dx \\
&\leq &C\mathbf{E}\int_{0}^{1}t^{-\delta }\int_{0}^{t}\int \left(
1+\left\vert x\right\vert ^{\beta }\right) \left\vert L^{\mu
}D^{k}p^{R}\left( 1,x+Z_{r}^{\mu }\right) \right\vert dxdr\frac{dt}{t} \\
&&+C\mathbf{E}\int_{1}^{\infty }t^{-\delta }\int \left( 1+\left\vert
x\right\vert ^{\beta }\right) [\left\vert D^{k}p^{R}\left( 1,x+Z_{t}^{\mu
}\right) \right\vert +\left\vert D^{k}p^{R}\left( 1,x\right) \right\vert ]dx%
\frac{dt}{t} \\
&=&A_{1}+A_{2}.
\end{eqnarray*}%
Now, by Corollary 4 and Lemma 17 in \cite{Mph2}, 
\begin{eqnarray*}
A_{1} &\leq &C\int_{0}^{1}t^{-\delta }\int \left( 1+\left\vert x\right\vert
^{\beta }\right) \left\vert L^{\mu }D^{k}p^{R}\left( 1,x\right) \right\vert
dxdt \\
&&+C\int_{0}^{1}t^{-\delta }\int_{0}^{t}\mathbf{E}\left( \left\vert
Z_{r}^{\mu }\right\vert ^{\beta }\right) dr\int \left\vert L^{\mu
}D^{k}p^{R}\left( 1,x\right) \right\vert dx\frac{dt}{t} \\
&\leq &C,
\end{eqnarray*}%
and%
\begin{eqnarray*}
A_{2} &\leq &C\int_{1}^{\infty }t^{-\delta }\int \left( 1+\left\vert
x\right\vert ^{\beta }\right) \left\vert D^{k}p^{R}\left( 1,x\right)
\right\vert ]dx\frac{dt}{t} \\
&&+C\int_{1}^{\infty }t^{-\delta }\mathbf{E}\left( \left\vert Z_{t}^{\mu
}\right\vert ^{\beta }\right) \int \left\vert D^{k}p^{R}\left( 1,x\right)
\right\vert dx\frac{dt}{t} \\
&\leq &C\left( 1+\int_{1}^{\infty }t^{-\delta }t^{\frac{\beta }{\alpha _{2}}}%
\frac{dt}{t}\right) \leq C.
\end{eqnarray*}%
Similarly, the second inequality of part b) is proved.
\end{proof}

Let $\mathfrak{A}_{sign}^{\sigma }=\mathfrak{A}^{\sigma }-\mathfrak{A}%
^{\sigma }=\left\{ \eta -\rho :\eta ,\rho \in \mathfrak{A}^{\sigma }\right\}
.$

\begin{lemma}
\label{al1}Let $\delta \in \left( 0,1\right) ,$\textbf{D}$\left( \kappa
,l\right) $ and \textbf{B}$\left( \kappa ,l\right) $ hold for $\pi \in 
\mathfrak{A}^{\sigma }$ with scaling function $\kappa $ and scaling factor $%
l $. Let $\mu \in \mathfrak{A}_{sym}^{\sigma }$ and $\eta \in \mathfrak{A}%
_{sign}^{\sigma }.$ Then%
\begin{eqnarray*}
p^{\pi }\left( t,x\right) &=&a\left( t\right) ^{-d}p^{\tilde{\pi}_{a\left(
t\right) }}\left( 1,xa\left( t\right) ^{-1}\right) ,x\in \mathbf{R}^{d},t>0,
\\
L^{\mu ;\delta }p^{\pi }\left( t,x\right) &=&\frac{1}{t^{\delta }}a\left(
t\right) ^{-d}(L^{\tilde{\mu}_{a\left( t\right) };\delta }p^{\tilde{\pi}%
_{a\left( t\right) }})\left( 1,xa\left( t\right) ^{-1}\right) ,x\in \mathbf{R%
}^{d},t>0, \\
L^{\eta }L^{\mu ;\delta }p^{\pi }\left( t,x\right) &=&\frac{1}{t^{1+\delta }}%
a\left( t\right) ^{-d}(L^{\tilde{\eta}_{a\left( t\right) }}L^{\tilde{\mu}%
_{a\left( t\right) };\delta }p^{\tilde{\pi}_{a\left( t\right) }})\left(
1,xa\left( t\right) ^{-1}\right) ,x\in \mathbf{R}^{d},t>0,
\end{eqnarray*}%
where $a\left( t\right) =\inf \left\{ r\geq 0:\kappa \left( r\right) \geq
t\right\} ,t>0.$
\end{lemma}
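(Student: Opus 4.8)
The plan is to establish the three scaling identities in sequence, deriving the second and third from the first together with the representation \eqref{ff01}. First I would prove the pdf identity $p^{\pi}(t,x)=a(t)^{-d}p^{\tilde{\pi}_{a(t)}}(1,xa(t)^{-1})$. The key observation is that by definition $a(t)=\inf\{r\ge 0:\kappa(r)\ge t\}$, so (modulo continuity of $\kappa$, which is assumed) $\kappa(a(t))=t$, and hence $\tilde{\pi}_{a(t)}(dy)=\kappa(a(t))\pi_{a(t)}(dy)=t\,\pi_{a(t)}(dy)$. On the Fourier side this means $\psi^{\tilde{\pi}_{a(t)}}(\xi)=t\,\psi^{\pi_{a(t)}}(\xi)=t\,\psi^{\pi}(a(t)^{-1}\xi)$, using that $\pi_{R}$ is the pushforward of $\pi$ under $y\mapsto y/R$, so that $\psi^{\pi_R}(\xi)=\psi^{\pi}(\xi/R)$. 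Therefore
\begin{equation*}
\mathbf{E}e^{i2\pi\xi\cdot Z_{1}^{\tilde{\pi}_{a(t)}}}=\exp\{\psi^{\tilde{\pi}_{a(t)}}(\xi)\}=\exp\{t\,\psi^{\pi}(a(t)^{-1}\xi)\}=\mathbf{E}e^{i2\pi(a(t)^{-1}\xi)\cdot Z_{t}^{\pi}},
\end{equation*}
so $Z_{1}^{\tilde{\pi}_{a(t)}}\stackrel{d}{=}a(t)^{-1}Z_{t}^{\pi}$, and the stated identity for densities follows by the change-of-variables formula for the law of a scaled random vector.

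Next, for the second identity I would apply $L^{\mu;\delta}$ (acting in $x$) to both sides of the first identity. The cleanest route is to use the subordination formula \eqref{ff01}: for $f=p^{\pi}(t,\cdot)$,
\begin{equation*}
L^{\mu;\delta}p^{\pi}(t,x)=c_{\delta}\mathbf{E}\int_{0}^{\infty}r^{-\delta}[p^{\pi}(t,x+Z_{r}^{\mu})-p^{\pi}(t,x)]\frac{dr}{r}.
\end{equation*}
Substituting $p^{\pi}(t,y)=a(t)^{-d}p^{\tilde{\pi}_{a(t)}}(1,ya(t)^{-1})$ and changing the time variable $r\mapsto a(t)^{2}r$ in a way that matches $Z^{\mu}$ with $Z^{\tilde{\mu}_{a(t)}}$ — here one uses the analogous scaling $Z_{r}^{\mu}\stackrel{d}{=}$ a rescaling of $Z^{\tilde{\mu}_{a(t)}}$, together with the homogeneity $r^{-\delta}\,dr/r$ producing the factor $t^{-\delta}$ via $\kappa(a(t))=t$ — yields the claimed $\frac{1}{t^{\delta}}a(t)^{-d}(L^{\tilde{\mu}_{a(t)};\delta}p^{\tilde{\pi}_{a(t)}})(1,xa(t)^{-1})$. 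Alternatively, and perhaps more transparently, one can verify the identity on the Fourier side: $\widehat{L^{\mu;\delta}p^{\pi}(t,\cdot)}(\xi)=-(-\psi^{\mu}(\xi))^{\delta}\widehat{p^{\pi}(t,\cdot)}(\xi)=-(-\psi^{\mu}(\xi))^{\delta}e^{t\psi^{\pi}(\xi)}$, and then match this against the Fourier transform of the right-hand side using $\psi^{\tilde{\mu}_{a(t)}}(\xi)=t\psi^{\mu}(a(t)^{-1}\xi)$ (so that $(-\psi^{\tilde{\mu}_{a(t)}}(\xi))^{\delta}=t^{\delta}(-\psi^{\mu}(a(t)^{-1}\xi))^{\delta}$) together with the scaling of $p^{\tilde{\pi}_{a(t)}}$ already established; the factor $t^{\delta}$ from the $\delta$-power and the $d$-dimensional Jacobian from rescaling combine to give exactly $t^{-\delta}a(t)^{-d}$.

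The third identity is obtained from the second by applying $L^{\eta}$ in $x$ and noting that $L^{\eta}$ has symbol $\psi^{\eta}$ with the same scaling $\psi^{\tilde{\eta}_{a(t)}}(\xi)=t\psi^{\eta}(a(t)^{-1}\xi)$ (this extends to $\eta\in\mathfrak{A}_{sign}^{\sigma}$ by linearity, since the symbol is additive in the measure); each application of an operator whose symbol scales linearly in $t$ contributes one extra power of $t^{-1}$ and respects the spatial rescaling, giving the $t^{-(1+\delta)}$ prefactor. The main obstacle I anticipate is purely bookkeeping: carefully justifying $\kappa(a(t))=t$ (continuity and strict behavior of $\kappa$ at the relevant points) and correctly tracking the interaction of the three scalings — the $a(t)^{-d}$ spatial Jacobian, the $t^{-\delta}$ from the fractional power, and the $t^{-1}$ from $L^{\eta}$ — so that no stray factor of $a(t)$ survives. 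None of the steps is deep; the whole lemma is a change-of-variables computation anchored on the single scaling relation $\psi^{\tilde{\pi}_{a(t)}}(\xi)=t\psi^{\pi}(a(t)^{-1}\xi)$ and its analogues for $\mu$ and $\eta$.
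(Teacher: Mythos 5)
Your Fourier-side verification is exactly the paper's argument: the paper likewise establishes $\exp\{\psi^{\pi}(\xi)t\}=\exp\{\psi^{\tilde{\pi}_{a(t)}}(a(t)\xi)\}$ and $(-\psi^{\mu}(\xi))^{\delta}\exp\{\psi^{\pi}(\xi)t\}=t^{-\delta}(-\psi^{\tilde{\mu}_{a(t)}}(a(t)\xi))^{\delta}\exp\{\psi^{\tilde{\pi}_{a(t)}}(a(t)\xi)\}$ and then takes Fourier inverses (citing smoothness and integrability of $p^{\tilde{\pi}_{a(t)}}(r,\cdot)$ from Lemma 5 of \cite{MPh} to justify the inversion). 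One small correction in your alternative subordination sketch: matching $Z^{\mu}$ with $Z^{\tilde{\mu}_{a(t)}}$ requires the time change $r\mapsto tr$ (since $Z_{r}^{\tilde{\mu}_{a(t)}}\stackrel{d}{=}a(t)^{-1}Z_{tr}^{\mu}$, the factor being $t=\kappa(a(t))$), not $r\mapsto a(t)^{2}r$; the $t^{-\delta}$ then falls out directly, as you indicate.
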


\begin{proof}
Indeed, by Lemma 5 in \cite{MPh}, for each $t>0$ and $r>0$, the density $p^{%
\tilde{\pi}_{a\left( t\right) }}\left( r,x\right) ,x\in \mathbf{R}^{d},$ is
4 times continuously differentiable in $x$ bounded and integrable. Obviously,%
\begin{equation*}
\exp \left\{ \psi ^{\pi }\left( \xi \right) t\right\} =\exp \left\{ \psi ^{%
\tilde{\pi}_{a\left( t\right) }}\left( a\left( t\right) \xi \right) \right\}
,t>0,\xi \in \mathbf{R}^{d},
\end{equation*}%
and%
\begin{eqnarray*}
&&\left( -\psi ^{\mu }\left( \xi \right) \right) ^{\delta }\exp \left\{ \psi
^{\pi }\left( \xi \right) t\right\} \\
&=&\frac{1}{t^{\delta }}\left( -\psi ^{\tilde{\mu}_{a\left( t\right)
}}\left( a\left( t\right) \xi \right) \right) ^{\delta }\exp \left\{ \psi ^{%
\tilde{\pi}_{a\left( t\right) }}\left( a\left( t\right) \xi \right) \right\}
,t>0,\xi \in \mathbf{R}^{d}.
\end{eqnarray*}%
We derive the first two equalities by taking Fourier inverse. Similarly, the
third equality can be derived. The claim follows.
\end{proof}

\begin{lemma}
\label{al2}Let $\delta \in \left( 0,1\right) ,$\textbf{D}$\left( \kappa
,l\right) $ and \textbf{B}$\left( \kappa ,l\right) $ hold for $\pi \in 
\mathfrak{A}^{\sigma }$ with scaling function $\kappa $ and scaling factor $%
l $. Let $\mu \in \mathfrak{A}_{sym}^{\sigma }$. Assume 
\begin{equation*}
\int_{\left\vert y\right\vert \leq 1}\left\vert y\right\vert ^{\alpha _{1}}d%
\widetilde{\mu }_{R}+\int_{\left\vert y\right\vert >1}\left\vert
y\right\vert ^{\alpha _{2}}d\widetilde{\mu }_{R}\leq M,R>0
\end{equation*}%
($\alpha _{1},\alpha _{2}$ are exponents in \textbf{B}$\left( \kappa
,l\right) $). Then there exists $C=C\left( \kappa ,l,N_{0}\right) >0$ such
that for $\left\vert k\right\vert \leq 2,\beta \in \lbrack 0,\delta \alpha
_{2}),$\ 
\begin{eqnarray*}
\int_{\left\vert x\right\vert >c}\left\vert L^{\mu ;\delta }D^{k}p^{\pi
}\left( t,x\right) \right\vert dx &\leq &CMt^{-\delta }a\left( t\right)
^{\beta -\left\vert k\right\vert }c^{-\beta }, \\
\int \left\vert L^{\mu ;\delta }D^{k}p^{\pi }\left( t,x\right) \right\vert
dx &\leq &CMt^{-\delta }a\left( t\right) ^{-\left\vert k\right\vert },
\end{eqnarray*}%
with $a\left( t\right) =\inf \left\{ r\geq 0:\kappa \left( r\right) \geq
t\right\} ,t>0.$ Recall $\alpha _{1},\alpha _{2}\in (0,1]$ if $\sigma \in
\left( 0,1\right) ;\alpha _{1},\alpha _{2}\in (1,2]$ if $\sigma \in \left(
1,2\right) $ and $\alpha _{2}\in (0,1),\alpha _{1}\in (1,2]$ if $\sigma =1.$
\end{lemma}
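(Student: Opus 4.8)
\medskip

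The plan is to reduce everything to the rescaled situation at time $1$ via Lemma \ref{al1}, and then apply the weighted $L^1$-bounds of Lemma \ref{le0}(b). First I would write, using Lemma \ref{al1},
\begin{equation*}
L^{\mu ;\delta }D^{k}p^{\pi }\left( t,x\right) =\frac{1}{t^{\delta }}a\left( t\right) ^{-d-\left\vert k\right\vert }\left( L^{\tilde{\mu}_{a\left( t\right) };\delta }D^{k}p^{\tilde{\pi}_{a\left( t\right) }}\right) \left( 1,xa\left( t\right) ^{-1}\right) ,
\end{equation*}
where the extra factor $a\left( t\right) ^{-\left\vert k\right\vert }$ comes from the chain rule applied to the $D^{k}$ derivative (each spatial derivative of $x\mapsto h(xa(t)^{-1})$ produces a factor $a(t)^{-1}$). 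Then, changing variables $x=a\left( t\right) y$ in the integral $\int \left\vert L^{\mu ;\delta }D^{k}p^{\pi }\left( t,x\right) \right\vert dx$, the Jacobian $a(t)^d$ cancels the $a(t)^{-d}$, leaving
\begin{equation*}
\int \left\vert L^{\mu ;\delta }D^{k}p^{\pi }\left( t,x\right) \right\vert dx=t^{-\delta }a\left( t\right) ^{-\left\vert k\right\vert }\int \left\vert L^{\tilde{\mu}_{a\left( t\right) };\delta }D^{k}p^{\tilde{\pi}_{a\left( t\right) }}\left( 1,y\right) \right\vert dy,
\end{equation*}
and the $y$-integral is bounded by $CM$ by the (unweighted $\beta =0$ case of the) first inequality of Lemma \ref{le0}(b), applied with $R=a\left( t\right)$. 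Since $\kappa$ is a scaling function the hypothesis of Lemma \ref{le0}(b) on $\widetilde{\mu}_{R}$ is exactly the assumed bound, uniformly in $R$, so this gives the second displayed inequality of the lemma.

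\medskip

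For the first (tail) inequality I would similarly change variables: $\int_{\left\vert x\right\vert >c}=t^{-\delta }a\left( t\right) ^{-\left\vert k\right\vert }\int_{\left\vert y\right\vert >c/a\left( t\right) }\left\vert L^{\tilde{\mu}_{a\left( t\right) };\delta }D^{k}p^{\tilde{\pi}_{a\left( t\right) }}\left( 1,y\right) \right\vert dy$. On the region $\left\vert y\right\vert >c/a(t)$ one has $1\leq \left\vert y\right\vert ^{\beta }\big(a(t)/c\big)^{\beta}$, so the restricted integral is bounded by $\big(a(t)/c\big)^{\beta}\int \left\vert y\right\vert ^{\beta }\left\vert L^{\tilde{\mu}_{a\left( t\right) };\delta }D^{k}p^{\tilde{\pi}_{a\left( t\right) }}\left( 1,y\right) \right\vert dy\leq C M\,a(t)^{\beta}c^{-\beta}$, again by Lemma \ref{le0}(b), now using the full weight $\left( 1+\left\vert x\right\vert ^{\beta}\right)$ there (valid precisely because $\beta \in [0,\delta\alpha_{2})$). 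Collecting the prefactors yields $CMt^{-\delta}a(t)^{\beta-\left\vert k\right\vert}c^{-\beta}$, as claimed.

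\medskip

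The only genuine subtlety — and what I expect to be the main thing to check carefully rather than a real obstacle — is that Lemma \ref{le0}(b) is stated for $D^{k}L^{\tilde\mu_R;\delta}p^R$ whereas here I need $L^{\tilde\mu_R;\delta}D^{k}p^R$; these agree because $D^{k}$ and the Fourier multiplier $L^{\mu;\delta}$ commute on $\mathcal{S}(\mathbf{R}^d)$ (both act through multiplication on the Fourier side), and by Lemma 5 of \cite{MPh} the relevant densities are smooth enough (at least $4$ times continuously differentiable, bounded, integrable) for this manipulation and for Fubini in the representation (\ref{ff01}) to be legitimate. One should also note that the constant $C$ coming out is $C(\kappa,l,N_0,\beta)$ as in Lemma \ref{le0}(b), consistent with the claimed dependence $C(\kappa,l,N_0)$ once $\beta$ is fixed. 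The final remark about the ranges of $\alpha_1,\alpha_2$ is just a restatement of assumption \textbf{B}$\left(\kappa,l\right)$ and requires no argument.
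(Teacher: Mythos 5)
Your proof is correct and follows essentially the same route as the paper: apply the scaling identity of Lemma \ref{al1} (with the extra $a(t)^{-|k|}$ from the chain rule), change variables $x = a(t)y$, use a Chebyshev-type bound $1\le |y|^{\beta}(a(t)/c)^{\beta}$ on the tail domain, and invoke the weighted $L^1$-estimate of Lemma \ref{le0}(b). Your careful note that $D^k$ and the Fourier multiplier $L^{\mu;\delta}$ commute, and that the constant from Lemma \ref{le0}(b) depends on $\beta$, makes the argument slightly more explicit than the paper's terse proof, but the substance is the same.
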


\begin{proof}
Indeed, by Lemma \ref{al1}, Chebyshev inequality, and Lemma \ref{le0}, for $%
\left\vert k\right\vert \leq 2,\beta \in \lbrack 0,\delta \alpha _{2}),$%
\begin{eqnarray*}
&&\int_{\left\vert x\right\vert >c}\left\vert L^{\mu ;\delta }D^{k}p^{\pi
}\left( t,x\right) \right\vert dx \\
&=&\frac{1}{t^{\delta }}a\left( t\right) ^{-d-k}\int_{\left\vert
x\right\vert >c}\left\vert L^{\tilde{\mu}_{a\left( t\right) };\delta
}D^{k}p^{\tilde{\pi}_{a\left( t\right) }}\left( 1,\frac{x}{a\left( t\right) }%
\right) \right\vert dx \\
&\leq &\frac{a\left( t\right) ^{\beta -k}c^{-\beta }}{t^{\delta }}\int
\left\vert x\right\vert ^{\beta }\left\vert L^{\tilde{\mu}_{a\left( t\right)
};\delta }D^{k}p^{\tilde{\pi}_{a\left( t\right) }}\left( 1,x\right)
\right\vert dx\leq CM\frac{a\left( t\right) ^{\beta -k}c^{-\beta }}{%
t^{\delta }}.
\end{eqnarray*}%
Similarly, we derive the second estimate.
\end{proof}

\begin{lemma}
\label{mvt}Let \textbf{D}$\left( \kappa ,l\right) $ and \textbf{B}$\left(
\kappa ,l\right) $ hold for $\pi \in \mathfrak{A}^{\sigma }$ with scaling
function $\kappa $ and scaling factor $l$. Let $\mu \in \mathfrak{A}%
_{sym}^{\sigma }$. Assume 
\begin{equation*}
\int_{\left\vert y\right\vert \leq 1}\left\vert y\right\vert ^{\alpha _{1}}d%
\widetilde{\mu }_{R}+\int_{\left\vert y\right\vert >1}\left\vert
y\right\vert ^{\alpha _{2}}d\tilde{\mu}_{R}\leq M,R>0
\end{equation*}%
($\alpha _{1},\alpha _{2}$ are exponents in \textbf{B}$\left( \kappa
,l\right) $). Then for $\delta \in \left( 0,1\right) ,$

a) There exists $C=C\left( \kappa ,l,N_{0}\right) >0$ such that%
\begin{equation*}
\int_{\mathbf{R}^{d}}\left\vert L^{\mu ;\delta }p^{\pi }\left( t,x-y\right)
-L^{\mu ;\delta }p^{\pi }\left( t,x\right) \right\vert dx\leq CM\frac{%
\left\vert y\right\vert }{t^{\delta }a\left( t\right) },t>0,y\in \mathbf{R}%
^{d},
\end{equation*}%
where $a\left( t\right) =\inf \left\{ r:\kappa \left( r\right) \geq
t\right\} ,t>0.$

b) There is a constant $C=C\left( \kappa ,l,N_{0}\right) $ such that

\begin{align}
& \int_{2a}^{\infty }\left( \int \left\vert L^{\mu ;\frac{1}{2}}p^{\pi
}\left( t-s,x\right) -L^{\mu ;\frac{1}{2}}p^{\pi }\left( t,x\right)
\right\vert dx\right) ^{2}dt  \label{eq:MVTtime} \\
& \leq CM,\left\vert s\right\vert \leq a<\infty .  \notag
\end{align}
\end{lemma}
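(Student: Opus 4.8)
The plan is to prove part a) first, since part b) is essentially a reduction to a) combined with the time-scaling identity from Lemma \ref{al1} and an $L^2$-type integration in time. For part a), I would write the difference as an integral of a derivative: $L^{\mu;\delta}p^{\pi}(t,x-y)-L^{\mu;\delta}p^{\pi}(t,x)=-\int_{0}^{1}y\cdot\nabla_{x}L^{\mu;\delta}p^{\pi}(t,x-\theta y)\,d\theta$, so that after taking $|\cdot|$, integrating in $x$, using Fubini and the translation-invariance of Lebesgue measure, one gets the bound $|y|\int|\nabla_{x}L^{\mu;\delta}p^{\pi}(t,x)|\,dx$. Then I apply the second estimate of Lemma \ref{al2} with $|k|=1$, which gives $\int|L^{\mu;\delta}D^{k}p^{\pi}(t,x)|\,dx\leq CMt^{-\delta}a(t)^{-1}$, yielding exactly the claimed bound $CM|y|/(t^{\delta}a(t))$.

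For part b), the idea is to exploit the scaling identity $L^{\mu;1/2}p^{\pi}(t,x)=t^{-1/2}a(t)^{-d}(L^{\tilde{\mu}_{a(t)};1/2}p^{\tilde{\pi}_{a(t)}})(1,x a(t)^{-1})$ from Lemma \ref{al1}, and to estimate the inner $L^{1}_{x}$ norm of the time-increment $L^{\mu;1/2}p^{\pi}(t-s,\cdot)-L^{\mu;1/2}p^{\pi}(t,\cdot)$ by writing it as $\int_{t-s}^{t}\partial_{r}L^{\mu;1/2}p^{\pi}(r,\cdot)\,dr$ (or, when $|s|\le a$ and $t\ge 2a$, so $r\in[t-s,t]\subseteq[t/2,3t/2]$ stays comparable to $t$, directly by the forward equation $\partial_{r}p^{\pi}=L^{\pi}p^{\pi}$). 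This gives $\partial_{r}L^{\mu;1/2}p^{\pi}(r,x)=L^{\pi}L^{\mu;1/2}p^{\pi}(r,x)=L^{\pi;1}L^{\mu;1/2}p^{\pi}(r,x)$, so the inner integral is bounded by $\int_{t-s}^{t}\bigl(\int|L^{\pi}L^{\mu;1/2}p^{\pi}(r,x)|\,dx\bigr)dr$. Applying the third scaling identity of Lemma \ref{al1} together with the $L^{1}$ bounds coming from Lemma \ref{le0} b) (the estimate on $\int|L^{\tilde{\mu}_{R};\delta}L^{\tilde{\pi}^{*}}p^{R}(1,x)|\,dx\le CM$) and a time-change to the normalized density, one gets $\int|L^{\pi}L^{\mu;1/2}p^{\pi}(r,x)|\,dx\leq CM\,r^{-1/2}\cdot r^{-1}=CM\,r^{-3/2}$ (the extra $r^{-1}$ coming from the $L^{\tilde{\pi}}$ factor via Lemma \ref{al1}); hence the inner $L^{1}_{x}$ norm is $\le CM|s|\sup_{r\in[t-s,t]}r^{-3/2}\le CM\,a\,t^{-3/2}$ for $t\ge 2a$.

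Squaring and integrating, $\int_{2a}^{\infty}(CM\,a\,t^{-3/2})^{2}\,dt=C M^{2}a^{2}\int_{2a}^{\infty}t^{-3}\,dt=C M^{2}a^{2}\cdot\tfrac{1}{2}(2a)^{-2}=CM^{2}$, which is the desired bound, uniform in $a$. (One should also double-check the borderline contribution $r$ near $t-s$ when $s$ is close to $a$ and $t$ close to $2a$; there $r\ge t-s\ge a>0$, so everything stays finite, and the same power $t^{-3/2}$ controls it after absorbing constants.)

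The main obstacle I anticipate is getting the correct power of $t$ (equivalently of $a(t)$) in the estimate of $\int|L^{\pi}L^{\mu;1/2}p^{\pi}(r,x)|\,dx$: one must track carefully how the two operators $L^{\pi}$ (order $1$) and $L^{\mu;1/2}$ (order $1/2$) each contribute a factor of $a(t)^{-1}$ resp. $t^{-1/2}$ under the scaling of Lemma \ref{al1}, and confirm that, combined, they give a net $t^{-3/2}$ up to the slowly varying factors — which is precisely what makes $\int_{2a}^{\infty}(\cdot)^{2}\,dt$ converge with the $a^{2}$ prefactor canceling. The rest (mean value theorem in $x$, Fubini, translation invariance, and quoting Lemmas \ref{al1}, \ref{al2}, \ref{le0}) is routine.
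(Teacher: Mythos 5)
Your proposal is correct and follows essentially the same route as the paper: the fundamental theorem of calculus in $x$ combined with the $L^{1}$ gradient bound (via scaling and Lemma \ref{le0}/\ref{al2}) for part a), and the fundamental theorem of calculus in $t$ together with $\partial_{t}p^{\pi}=L^{\pi}p^{\pi}$ and the $L^{1}$ bound $\int\left\vert L^{\pi}L^{\mu;1/2}p^{\pi}(r,x)\right\vert dx\leq CM\,r^{-3/2}$ for part b). The only cosmetic difference is in the last integration of part b): you replace the inner $r$-integral by a pointwise $\sup_{r}r^{-3/2}$ bound and integrate $a^{2}t^{-3}$, whereas the paper computes $\int_{0}^{1}(t-rs)^{-3/2}dr$ exactly and reduces to $\int_{2a}^{\infty}\left\vert (t-s)^{-1/2}-t^{-1/2}\right\vert^{2}dt\leq C$; both yield an $a$-independent constant.
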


\begin{proof}
By Lemma \ref{al1} and Lemma \ref{le0},%
\begin{eqnarray*}
&&\int_{\mathbf{R}^{d}}\left\vert L^{\mu ;\delta }p^{\pi }\left(
t,x-y\right) -L^{\mu ;\delta }p^{\pi }\left( t,x\right) \right\vert dx \\
&=&\frac{1}{t^{\delta }}\int \left\vert L^{\mu _{a\left( t\right) };\delta
}p^{\tilde{\pi}_{a\left( t\right) }}\left( 1,x-\frac{y}{a\left( t\right) }%
\right) -L^{\mu _{a\left( t\right) };\delta }p^{\tilde{\pi}_{a\left(
t\right) }}\left( 1,x\right) \right\vert dx \\
&\leq &\frac{1}{t^{\delta }}\int_{0}^{1}\int \left\vert \nabla L^{\tilde{\mu}%
_{a\left( t\right) };\delta }p^{\tilde{\pi}_{a\left( t\right) }}\left( 1,x-s%
\frac{y}{a\left( t\right) }\right) \right\vert \frac{\left\vert y\right\vert 
}{a\left( t\right) }dxds \\
&\leq &C\frac{\left\vert y\right\vert }{t^{\delta }a\left( t\right) }\int
\left\vert L^{\tilde{\mu}_{a\left( t\right) };\delta }\nabla p^{\tilde{\mu}%
_{a(t)}}\left( 1,x\right) \right\vert dx\leq CM\frac{\left\vert y\right\vert 
}{t^{\delta }a\left( t\right) }.
\end{eqnarray*}%
Similarly, we derive the estimate (\ref{eq:MVTtime}). By Lemma \ref{al1} and
Lemma \ref{le0},%
\begin{eqnarray*}
&&\int_{2a}^{\infty }\left( \int \left\vert L^{\mu ;\frac{1}{2}}p^{\pi
}\left( t-s,x\right) -L^{\mu ;\frac{1}{2}}p^{\pi }\left( t,x\right)
\right\vert dx\right) ^{2}dt \\
&\leq &\left\vert s\right\vert ^{2}\int_{2a}^{\infty }\left(
\int_{0}^{1}\int \left\vert L^{\mu ;\frac{1}{2}}L^{\pi }p^{\pi }\left(
t-rs,x\right) \right\vert dxdr\right) ^{2}dt \\
&\leq &C\left\vert s\right\vert ^{2}\int_{2a}^{\infty }\left( \int_{0}^{1}%
\frac{dr}{\left( t-rs\right) ^{1+\frac{1}{2}}}\right) ^{2}dt\leq
C\int_{2a}^{\infty }\left\vert \frac{1}{\left( t-s\right) ^{\frac{1}{2}}}-%
\frac{1}{t^{\frac{1}{2}}}\right\vert ^{2}dt \\
&\leq &C\left\vert s\right\vert \int_{2a}^{\infty }\frac{dt}{\left(
t-s\right) t}=C\int_{2a}^{\infty }\frac{1}{\left( \frac{t}{s}-1\right) }%
\frac{dt}{t}\leq C.
\end{eqnarray*}
\end{proof}

\section{Proof of the main theorem}

We split the proof into several steps. First we derive the existence of
smooth solutions for the equation with smooth input functions. Then we prove
the main estimate for them by verifying H\"{o}rmander condition. At the end
we extend the estimates and regularity result for general input functions.

\subsection{Existence and uniqueness of solution for smooth input functions}

Let $\pi \in \mathfrak{A}^{\sigma }$, and $Z_{t}=Z_{t}^{\pi },t\geq 0,$ be
the Levy process associated to it. Let $P_{t}\left( dy\right) $ be the
distribution of $Z_{t}^{\pi },t>0$, and for a measurable $f\geq 0,$%
\begin{equation*}
T_{t}f\left( x\right) =\int f\left( x+y\right) P_{t}\left( dy\right) ,\left(
t,x\right) \in E.
\end{equation*}%
For the representation of the solution to (\ref{mainEq}) we will use the
following operators:

\begin{align*}
T_{t}^{\lambda }g\left( x\right) & =e^{-\lambda t}\int g\left( x+y\right)
P_{t}\left( dy\right) ,\left( t,x\right) \in E,\hspace{1em}g\in \tilde{%
\mathcal{\mathbb{C}}}_{0,p}^{\infty }\left( \mathbf{R}^{d}\right) ,p>1, \\
R_{\lambda }f\left( t,x\right) & =\int_{0}^{t}e^{-\lambda \left( t-s\right)
}\int f\left( s,x+y\right) P_{t-s}\left( dy\right) ds,\left( t,x\right) \in
E,\hspace{1em}f\in \tilde{\mathcal{\mathbb{C}}}_{0,p}^{\infty }\left(
E\right) ,p>1,
\end{align*}%
and%
\begin{eqnarray*}
\tilde{R}_{\lambda }\Phi \left( t,x\right) &=&\int_{0}^{t}e^{-\lambda \left(
t-s\right) }\int_{U}\int \Phi \left( s,x+y,z\right) P_{t-s}\left( dy\right)
q\left( ds,dz\right) ,\left( t,x\right) \in E,\hspace{1em} \\
\Phi &\in &\tilde{\mathcal{\mathbb{C}}}_{2,p}^{\infty }\left( E\right) \cap 
\tilde{\mathcal{\mathbb{C}}}_{p,p}^{\infty }\left( E\right) \text{ if }p\geq
2, \\
\Phi &\in &\tilde{\mathcal{\mathbb{C}}}_{p,p}^{\infty }\left( E\right) \text{
if }p\in \left( 1,2\right) .
\end{eqnarray*}

First we present some simple estimates of $T_{t}^{\lambda }g,R_{\lambda }f,%
\tilde{R}_{\lambda }\Phi $.

\begin{lemma}
\label{lem-basicLpEst}The following estimates hold for any multiindex $%
\gamma $:

(i) $\mathbf{P}$-a.s.%
\begin{eqnarray*}
\left\vert D^{\gamma }T^{\lambda }g\right\vert _{L_{p}\left( E\right) }
&\leq &\rho _{\lambda }^{\frac{1}{p}}\left\vert D^{\gamma }g\right\vert
_{L_{p}\left( \mathbf{R}^{d}\right) },\hspace{1em}g\in \tilde{\mathbb{C}}%
_{0,p}^{\infty }\left( \mathbf{R}^{d}\right) ,p\geq 1, \\
\left\vert D^{\gamma }R_{\lambda }f\right\vert _{L_{p}\left( E\right) }
&\leq &\rho _{\lambda }\left\vert D^{\gamma }f\right\vert _{L_{p}\left(
E\right) },\hspace{1em}f\in \mathbb{\tilde{C}}_{0,p}^{\infty }\left(
E\right) ,p\geq 1,
\end{eqnarray*}%
and 
\begin{eqnarray*}
\left\vert D^{\gamma }R_{\lambda }f\left( t,\cdot \right) \right\vert
_{L_{p}\left( \mathbf{R}^{d}\right) } &\leq &\int_{0}^{t}\left\vert
D^{\gamma }f\left( s,\cdot \right) \right\vert _{L_{p}\left( \mathbf{R}%
^{d}\right) }ds,t\geq 0, \\
\left\vert T_{t}^{\lambda }g\right\vert _{L_{p}\left( \mathbf{R}^{d}\right)
} &\leq &e^{-\lambda t}\left\vert g\right\vert _{L_{p}\left( \mathbf{R}%
^{d}\right) },t\geq 0,p\geq 1;
\end{eqnarray*}

(ii) For each $p\geq 2,$%
\begin{eqnarray*}
&&\left\vert D^{\gamma }\tilde{R}_{\lambda }\Phi \right\vert _{\mathbb{L}%
_{p}\left( E\right) }^{p} \\
&\leq &C\left[ \rho _{\lambda }^{\frac{p}{2}}\mathbf{E}\int_{0}^{T}\left%
\vert D^{\gamma }\Phi \left( s,\cdot \right) \right\vert _{L_{2,p}\left( 
\mathbf{R}^{d}\right) }^{p}ds+\rho _{\lambda }\left\vert D^{\gamma }\Phi
\right\vert _{\mathbb{L}_{p,p}\left( E\right) }^{p}\right] , \\
\Phi  &\in &\tilde{\mathbb{C}}_{2,p}^{\infty }\left( E\right) \cap \tilde{%
\mathbb{C}}_{p,p}^{\infty }\left( E\right) ,
\end{eqnarray*}%
and for each \thinspace $p\in (1,2),$%
\begin{equation*}
\left\vert D^{\gamma }\tilde{R}_{\lambda }\Phi \right\vert _{\mathbb{L}%
_{p}\left( E\right) }^{p}\leq C\rho _{\lambda }\left\vert D^{\gamma }\Phi
\right\vert _{\mathbb{L}_{p,p}\left( E\right) }^{p},\Phi \in \tilde{\mathbb{C%
}}_{p,p}^{\infty }\left( E\right) ,
\end{equation*}

where $\rho _{\lambda }=T\wedge \frac{1}{\lambda }.$ Moreover,%
\begin{eqnarray*}
&&\left\vert D^{\gamma }\tilde{R}_{\lambda }\Phi \left( t,\cdot \right)
\right\vert _{\mathbb{L}_{p}\left( \mathbf{R}^{d}\right) }^{p} \\
&\leq &C\left\{ \mathbf{E}\left[ \left( \int_{0}^{t}\left\vert D^{\gamma
}\Phi \left( s,\cdot \right) \right\vert _{L_{2,p}\left( \mathbf{R}%
^{d}\right) }^{2}ds\right) ^{p/2}\right] +\mathbf{E}\int_{0}^{t}\left\vert
D^{\gamma }\Phi \left( s,\cdot \right) \right\vert _{L_{p,p}\left( \mathbf{R}%
^{d}\right) }^{p}ds\right\} ,
\end{eqnarray*}%
if $p\geq 2$, and%
\begin{equation*}
\left\vert D^{\gamma }\tilde{R}_{\lambda }\Phi \left( t,\cdot \right)
\right\vert _{\mathbb{L}_{p}\left( \mathbf{R}^{d}\right) }^{p}\leq C\mathbf{E%
}\int_{0}^{t}\left\vert D^{\gamma }\Phi \left( s,\cdot \right) \right\vert
_{L_{p,p}\left( \mathbf{R}^{d}\right) }^{p}ds,t>0,
\end{equation*}%
if $p\in \left( 1,2\right) .$
\end{lemma}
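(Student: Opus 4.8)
The plan is to treat the three operators $T^{\lambda}g$, $R_{\lambda}f$, $\tilde R_{\lambda}\Phi$ in turn. The deterministic parts (i) reduce to Minkowski's integral inequality for convolution against the probability measure $P_{t}$ plus Young's convolution inequality in time; the stochastic part (ii) reduces to the Burkholder--Davis--Gundy inequality (in the form of Kunita's inequality for stochastic integrals against $q$) applied pointwise in $x$, followed by the same two deterministic tools. Throughout, $D^{\gamma}$ commutes with $T_{t}$, $T_{t}^{\lambda}$, $R_{\lambda}$, $\tilde R_{\lambda}$, so it suffices to argue with $g,f,\Phi$ in place of $D^{\gamma}g,D^{\gamma}f,D^{\gamma}\Phi$. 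For (i): since $P_{t}$ is a probability measure, Minkowski gives $|T_{t}h|_{L_{p}(\mathbf{R}^{d})}\le|h|_{L_{p}(\mathbf{R}^{d})}$, whence $|T_{t}^{\lambda}g|_{L_{p}}\le e^{-\lambda t}|g|_{L_{p}}$ and $|R_{\lambda}f(t,\cdot)|_{L_{p}}\le\int_{0}^{t}e^{-\lambda(t-s)}|f(s,\cdot)|_{L_{p}}ds\le\int_{0}^{t}|f(s,\cdot)|_{L_{p}}ds$; these are exactly the pointwise-in-$t$ statements. Raising the bound for $T_{t}^{\lambda}g$ to the $p$-th power and integrating in $t$, the elementary estimate $\int_{0}^{T}e^{-p\lambda t}dt\le T\wedge(p\lambda)^{-1}\le\rho_{\lambda}$ gives $|T^{\lambda}g|_{L_{p}(E)}\le\rho_{\lambda}^{1/p}|g|_{L_{p}}$; for $R_{\lambda}f$ one writes $|R_{\lambda}f(t,\cdot)|_{L_{p}}\le\bigl(\mathbf{1}_{[0,\infty)}e^{-\lambda\cdot}\ast|f(\cdot,\cdot)|_{L_{p}}\bigr)(t)$ and applies Young's inequality on $[0,T]$ with $|\mathbf{1}_{[0,T]}e^{-\lambda\cdot}|_{L_{1}}=\lambda^{-1}(1-e^{-\lambda T})\le\rho_{\lambda}$ to get $|R_{\lambda}f|_{L_{p}(E)}\le\rho_{\lambda}|f|_{L_{p}(E)}$.

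For (ii) with $p\ge2$: fix $x$ and view $\tilde R_{\lambda}\Phi(t,x)=\int_{0}^{t}\int_{U}e^{-\lambda(t-s)}T_{t-s}(\Phi(s,\cdot,z))(x)\,q(ds,dz)$ as a scalar stochastic integral, well defined because $\Phi=\Phi\chi_{U_{n}}$ is smooth with the required integrability and $(s,\omega,z)\mapsto T_{t-s}(\Phi(s,\cdot,z))(x)$ is predictable (cf. the Appendix). Kunita's inequality bounds $\mathbf{E}|\tilde R_{\lambda}\Phi(t,x)|^{p}$ by $C_{p}$ times the sum of a square-function term $\mathbf{E}\bigl(\int_{0}^{t}\int_{U}e^{-2\lambda(t-s)}|T_{t-s}(\Phi(s,\cdot,z))(x)|^{2}\Pi(dz)ds\bigr)^{p/2}$ and a $p$-term $\mathbf{E}\int_{0}^{t}\int_{U}e^{-p\lambda(t-s)}|T_{t-s}(\Phi(s,\cdot,z))(x)|^{p}\Pi(dz)ds$; we integrate both in $x$. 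The $p$-term is straightforward: $\int|T_{t-s}h(x)|^{p}dx\le|h|_{L_{p}}^{p}$ gives the bound $\mathbf{E}\int_{0}^{t}e^{-p\lambda(t-s)}|\Phi(s,\cdot)|_{L_{p,p}(\mathbf{R}^{d})}^{p}ds$, and Fubini together with $\int_{s}^{T}e^{-p\lambda(t-s)}dt\le\rho_{\lambda}$ turns this into $C\rho_{\lambda}|\Phi|_{\mathbb{L}_{p,p}(E)}^{p}$. The square-function term is the main obstacle, and the point is that one must \emph{not} pull the $L_{p}(\mathbf{R}^{d})$-norm out by Minkowski at this stage, since that would produce the strictly larger quantity $\int_{U}|\Phi(s,\cdot,z)|_{L_{p}}^{2}\Pi(dz)=|\Phi(s,\cdot)|_{L_{2}(U;L_{p}(\mathbf{R}^{d}))}^{2}\ge|\Phi(s,\cdot)|_{L_{2,p}(\mathbf{R}^{d})}^{2}$. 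Instead, apply Cauchy--Schwarz to the $P_{t-s}$-average first: with $g_{\Phi}(s,x):=\bigl(\int_{U}|\Phi(s,x,z)|^{2}\Pi(dz)\bigr)^{1/2}$ one gets $\int_{U}|T_{t-s}(\Phi(s,\cdot,z))(x)|^{2}\Pi(dz)\le T_{t-s}\bigl(g_{\Phi}(s,\cdot)^{2}\bigr)(x)$, so Minkowski's inequality for the $L_{p/2}$-norm over the $ds$-integration plus $|T_{t-s}(g_{\Phi}(s,\cdot)^{2})|_{L_{p/2}}\le|g_{\Phi}(s,\cdot)|_{L_{p}}^{2}=|\Phi(s,\cdot)|_{L_{2,p}}^{2}$ bound the $x$-integrated square-function term by $\mathbf{E}\bigl(\int_{0}^{t}e^{-2\lambda(t-s)}|\Phi(s,\cdot)|_{L_{2,p}}^{2}ds\bigr)^{p/2}$; Young's inequality in $t$ with exponent $p/2\ge1$ then yields $C\rho_{\lambda}^{p/2}\mathbf{E}\int_{0}^{T}|\Phi(s,\cdot)|_{L_{2,p}}^{p}ds$. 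Adding the two contributions gives the stated $p\ge2$ bound for $\tilde R_{\lambda}\Phi$, and omitting the final integration in $t$ (and using $e^{-\cdot}\le1$) gives the pointwise-in-$t$ version.

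For (ii) with $p\in(1,2)$ only the $p$-term survives: from BDG one has $\mathbf{E}|M_{t}|^{p}\le C\,\mathbf{E}[M]_{t}^{p/2}$ for the martingale $M_{t}=\int_{0}^{t}\int_{U}H(s,z)\,q(ds,dz)$, and since $r\mapsto r^{p/2}$ is subadditive on the atoms of $[M]_{t}=\int_{0}^{t}\int_{U}|H|^{2}\,p(ds,dz)$ we obtain $\mathbf{E}[M]_{t}^{p/2}\le\mathbf{E}\int_{0}^{t}\int_{U}|H|^{p}\,p(ds,dz)=\mathbf{E}\int_{0}^{t}\int_{U}|H|^{p}\,\Pi(dz)ds$ by the compensation formula; applied with $H(s,z)(x)=e^{-\lambda(t-s)}T_{t-s}(\Phi(s,\cdot,z))(x)$, then integrated in $x$ (using $\int|T_{t-s}h(x)|^{p}dx\le|h|_{L_{p}}^{p}$) and in $t$ (Fubini, $\int_{s}^{T}e^{-p\lambda(t-s)}dt\le\rho_{\lambda}$), this gives $C\rho_{\lambda}|\Phi|_{\mathbb{L}_{p,p}(E)}^{p}$ and, without the $t$-integration, the pointwise-in-$t$ bound. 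In all of (ii) the constant depends only on $p$ through the universal BDG/Kunita constant.
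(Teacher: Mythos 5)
Your proof is correct and takes essentially the same route as the paper: for (i) you reprove the cited lemmas directly via Minkowski's inequality for the probability measure $P_{t}$ together with Young's inequality in time, and for (ii) you invoke Kunita's (BDG-type) inequality pointwise in $x$, move the $P_{t-s}$-average inside by Jensen, estimate the square-function term by Minkowski in $L_{p/2}$ and the linear term by Fubini, and finish with Young's inequality in $t$ -- exactly the computation carried out in the paper for $B(t)$ and $D(t)$. Your parenthetical warning about not applying Minkowski over $x$ before the Jensen step is a correct and helpful remark, though the paper avoids the issue simply by quoting the Kunita bound already in the $L_{2}(U)$-first form.
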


\begin{proof}
The estimates (i) follow from Lemma 15 in \cite{Mph2} and Lemma 8 in \cite%
{MPh}. Let $p\geq 2,\Phi \in \tilde{\mathbb{C}}_{2,p}^{\infty }\left(
E\right) \cap \tilde{\mathbb{C}}_{p,p}^{\infty }\left( E\right) $. Recall $%
\Phi =\Phi \chi _{U_{n}}$ for some $U_{n}\in \mathcal{U}$ with $\pi \left(
U_{n}\right) <\infty .$ Obviously, for any multiindex $\gamma $, $\left(
t,x\right) \in E,$%
\begin{eqnarray*}
D^{\gamma }\tilde{R}_{\lambda }\Phi \left( t,x\right)
&=&\int_{0}^{t}e^{-\lambda \left( t-s\right) }\int_{U}\int D^{\gamma }\Phi
\left( s,x+y,z\right) P_{t-s}\left( dy\right) q\left( ds,dz\right) \\
&=&\int_{0}^{t}e^{-\lambda \left( t-s\right) }\int_{U}\int D^{\gamma }\Phi
\left( s,x+y,z\right) P_{t-s}\left( dy\right) p\left( ds,dz\right) \\
&&-\int_{0}^{t}e^{-\lambda \left( t-s\right) }\int_{U}\int D^{\gamma }\Phi
\left( s,x+y,z\right) P_{t-s}\left( dy\right) \pi \left( dz\right) ds.
\end{eqnarray*}%
By Kunita's inequality (see \cite{ku}, \cite{mp2}), for $t>0,$ 
\begin{eqnarray*}
&&\mathbf{E}\int \left\vert D^{\gamma }\tilde{R}_{\lambda }\Phi \left(
t,x\right) \right\vert ^{p}dx \\
&\leq &C\mathbf{E}\int \left( \int_{0}^{t}e^{-2\lambda \left( t-s\right)
}\int \left( \int_{U}\left\vert D^{\gamma }\Phi \left( s,x+y,z\right)
\right\vert ^{2}\pi \left( dz\right) \right) P_{t-s}\left( dy\right)
ds\right) ^{p/2}dx \\
&&+C\mathbf{E}\int \int_{0}^{t}e^{-p\lambda \left( t-s\right) }\left\vert
T_{t-s}D^{\gamma }\Phi \left( s,x,z\right) \right\vert ^{p}\pi \left(
dz\right) dsdx \\
&=&B\left( t\right) +D\left( t\right) .
\end{eqnarray*}

By Fubini theorem and Minkowski inequality,%
\begin{equation*}
D\left( t\right) \leq C\mathbf{E}\int_{0}^{t}e^{-p\lambda \left( t-s\right)
}\left\vert D^{\gamma }\Phi \left( s,\cdot \right) \right\vert
_{L_{p,p}\left( \mathbf{R}^{d}\right) }^{p}ds,t>0,
\end{equation*}

and%
\begin{eqnarray*}
B\left( t\right) &\leq &C\mathbf{E}\left[ \left( \int_{0}^{t}e^{-2\lambda
\left( t-s\right) }\left\vert D^{\gamma }\Phi \left( s,\cdot \right)
\right\vert _{L_{2,p}\left( \mathbf{R}^{d}\right) }^{2}ds\right) ^{p/2}%
\right] \\
&=&C\mathbf{E}\left[ \left( \int_{0}^{t}e^{-2\lambda \left( t-s\right)
}\left\vert D^{\gamma }\Phi \left( s,\cdot \right) \right\vert
_{L_{2,p}\left( \mathbf{R}^{d}\right) }^{2}ds\right) ^{p/2}\right] \\
&\leq &C\left( \frac{1}{\lambda }\right) ^{p/2}\mathbf{E}\left[
\int_{0}^{t}2\lambda e^{-2\lambda \left( t-s\right) }\left\vert D^{\gamma
}\Phi \left( s,\cdot \right) \right\vert _{L_{2,p}\left( \mathbf{R}%
^{d}\right) }^{p}ds\right] .
\end{eqnarray*}%
Now,%
\begin{equation*}
\int_{0}^{T}D\left( t\right) dt\leq C\rho _{\lambda }\mathbf{E}%
\int_{0}^{T}\left\vert D^{\gamma }\Phi \left( s,\cdot \right) \right\vert
_{L_{p,p}\left( \mathbf{R}^{d}\right) }^{p}ds
\end{equation*}%
and%
\begin{equation*}
\int_{0}^{T}B\left( t\right) dt\leq C\rho _{\lambda }^{\frac{p}{2}}\mathbf{E}%
\int_{0}^{T}\left\vert D^{\gamma }\Phi \left( s,\cdot \right) \right\vert
_{L_{2,p}\left( \mathbf{R}^{d}\right) }^{p}ds.
\end{equation*}

Similarly we consider the case $p\in (1,2).$
\end{proof}

\begin{lemma}
\label{lem:basicEst2}For $\mu {\in }\mathfrak{A}$, let $f\in \mathcal{%
\mathbb{\tilde{C}}}_{0,p}^{\infty }\left( E\right) ,g\in \tilde{\mathcal{%
\mathbb{C}}}_{0,p}^{\infty }\left( \mathbf{R}^{d}\right) ,\Phi \in \mathbb{%
\tilde{\mathcal{\mathbb{C}}}}_{2,p}^{\infty }\left( E\right) \cap {\tilde{%
\mathcal{\mathbb{C}}}}_{p,p}^{\infty }\left( E\right) $ for $p\in \left[
2,\infty \right) $ and $\Phi \in {\tilde{\mathcal{\mathbb{C}}}}%
_{p,p}^{\infty }\left( E\right) $ for $p\in (1,2$, then there is unique $%
u\in \tilde{\mathcal{\mathbb{C}}}_{0,p}^{\infty }\left( E\right) $ solving (%
\ref{mainEq}). Moreover,

\begin{equation*}
u\left( t,x\right) =T_{t}^{\lambda }g\left( x\right) +R_{\lambda }f\left(
t,x\right) +\tilde{R}_{\lambda }\Phi \left( t,x\right) ,\left( t,x\right)
\in E,
\end{equation*}%
and $u_{1}\left( t,x\right) =T_{t}^{\lambda }g\left( x\right) ,\left(
t,x\right) \in E,$ solves (\ref{mainEq}) with $f=0,\Phi =0$, $%
u_{2}=R_{\lambda }f$ solves (\ref{mainEq}) with $g=0,\Phi =0$, and $u_{3}=%
\tilde{R}_{\lambda }\Phi $ solves \ (\ref{mainEq})with $g=0,f=0.$
\end{lemma}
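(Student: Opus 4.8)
The plan is to verify directly that the candidate $u = T^{\lambda}g + R_{\lambda}f + \tilde R_{\lambda}\Phi$ solves \eqref{mainEq}, treating the three summands separately, and then to argue uniqueness by a standard energy/duality argument together with the regularity afforded by Lemma \ref{lem-basicLpEst}. First I would address $u_1(t,x)=T_t^{\lambda}g(x)=e^{-\lambda t}\int g(x+y)P_t(dy)$. Since $g\in\tilde{\mathbb{C}}_{0,p}^{\infty}(\mathbf{R}^d)$ is smooth with all derivatives in $L_p$ and bounded, $T_t g$ is smooth in $x$, $L^{\pi}T_tg = T_tL^{\pi}g$ (the generator commutes with the semigroup), and $t\mapsto T_tg(x)$ is differentiable with $\partial_t T_tg = L^{\pi}T_tg$; this is the Kolmogorov forward/backward equation for the Lévy process $Z^{\pi}$, which one gets by differentiating $\widehat{T_tg}(\xi)=e^{\psi^{\pi}(\xi)t}\hat g(\xi)$ in $t$ and inverting the Fourier transform (justified because $g\in\mathcal{S}$-like, with the required integrability of the symbol against $\hat g$). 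Including the $e^{-\lambda t}$ factor gives $\partial_t u_1 = L^{\pi}u_1 - \lambda u_1$, i.e. \eqref{mainEq} with $f=0,\Phi=0$, and $u_1(0,\cdot)=g$.

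Next I would treat $u_2 = R_{\lambda}f(t,x)=\int_0^t e^{-\lambda(t-s)}T_{t-s}f(s,\cdot)(x)\,ds$. Differentiating the time integral (Leibniz rule; the integrand and its $x$-derivatives are continuous and bounded on $E$ by the $\tilde{\mathbb{C}}^{\infty}$ hypotheses and Lemma \ref{lem-basicLpEst}) yields $\partial_t u_2(t,x) = f(t,x) + \int_0^t \partial_t\big[e^{-\lambda(t-s)}T_{t-s}f(s,\cdot)\big](x)\,ds = f(t,x) + (L^{\pi}-\lambda)u_2(t,x)$, using the forward equation applied to the inner semigroup term as in the $u_1$ step; and $u_2(0,\cdot)=0$. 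For $u_3=\tilde R_{\lambda}\Phi(t,x)=\int_0^t e^{-\lambda(t-s)}\int_U T_{t-s}\Phi(s,\cdot,z)(x)\,q(ds,dz)$ one argues similarly but now the "time differentiation" is the stochastic (Itô) differential of a jump-integral process. Writing $\Psi_s(t,x,z):=e^{-\lambda(t-s)}T_{t-s}\Phi(s,\cdot,z)(x)$, one has by the stochastic Fubini theorem and the Itô formula for stochastic integrals against $q$ (constructed in the Appendix) that $du_3(t,x) = \big[\int_U\int\Phi(t,x+y,z)P_0(dy)q(\{t\},dz)\big] + \big(\int_0^t (L^{\pi}-\lambda)\Psi_s(t,x,z)q(ds,dz)\big)dt$; since $P_0$ is the point mass at $0$, the first bracket is $\int_U\Phi(t,x,z)q(dt,dz)$, and the drift term is $(L^{\pi}-\lambda)u_3(t,x)\,dt$. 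This gives $du_3 = [(L^{\pi}-\lambda)u_3]dt + \int_U\Phi(t,x,z)q(dt,dz)$ with $u_3(0,\cdot)=0$. Summing, $u=u_1+u_2+u_3$ solves \eqref{mainEq}; the moment/regularity bounds of Lemma \ref{lem-basicLpEst} show $u\in\tilde{\mathbb{C}}_{0,p}^{\infty}(E)$ (with the extra integrability $\Phi=\Phi\chi_{U_n}$ ensuring the stochastic integrals are well-defined and the $U$-integrals finite).

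For uniqueness, suppose $v\in\tilde{\mathbb{C}}_{0,p}^{\infty}(E)$ solves \eqref{mainEq} with $f=g=\Phi=0$; then $w=v$ is a smooth (in $x$) solution of $dw=(L^{\pi}-\lambda)w\,dt$, $w(0,\cdot)=0$, with no martingale part. Taking Fourier transform in $x$ (legitimate since $w(t,\cdot)$ and its derivatives are in $L_p$ and, being smooth with the decay coming from the $\tilde{\mathbb{C}}^{\infty}$ definition, $\hat w(t,\cdot)$ is well-defined), $\hat w$ satisfies the deterministic ODE $\partial_t\hat w(t,\xi) = (\psi^{\pi}(\xi)-\lambda)\hat w(t,\xi)$, $\hat w(0,\xi)=0$, for each fixed $\xi$ and $\mathbf{P}$-a.s., whence $\hat w\equiv 0$ and so $v\equiv 0$. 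The main obstacle is the rigorous justification of the Itô/stochastic-Fubini manipulation for $u_3$: one must interchange the stochastic integral in $(s,z)$ with the $y$-integration against $P_{t-s}(dy)$ and then differentiate the resulting process in $t$, which requires the stochastic Fubini theorem for jump-measure integrals and a careful check that the generator $L^{\pi}$ can be passed inside the stochastic integral — all of which rest on the construction and estimates in the Appendix, the hypothesis $\Phi=\Phi\chi_{U_n}$, and the Kunita-type bounds already recorded in Lemma \ref{lem-basicLpEst}.
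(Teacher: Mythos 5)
Your verification for $u_{1},u_{2},u_{3}$ is essentially the paper's: Kolmogorov forward equation for $e^{-\lambda t}T_{t}g$, Leibniz rule plus the same forward equation for the Duhamel term $R_{\lambda }f$, and for $\tilde{R}_{\lambda }\Phi $ the fundamental theorem of calculus applied in $t$ to the deterministic kernel $e^{-\lambda (t-s)}T_{t-s}\Phi (s,\cdot ,z)(x)$ followed by stochastic Fubini. The paper carries out the $u_{3}$ step entirely in integral form (it writes $v(t,x)=\int_{0}^{t}\int_{U}\Phi \,q(ds,dz)+\int_{0}^{t}\int_{s}^{t}\cdots \,dr\,q(ds,dz)$ and then swaps the $r$-integral with $q(ds,dz)$), which is cleaner and avoids your informal It\^{o}-differential notation with the $P_{0}$ and $q(\{t\},dz)$ symbols; but the technical ingredients you isolate, namely stochastic Fubini, the hypothesis $\Phi =\Phi \chi _{U_{n}}$, and commuting $L^{\pi }$ past the stochastic integral, are precisely the ones that matter.

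For uniqueness one step needs repair. For $p>2$ a function in $L_{p}(\mathbf{R}^{d})$ has no classical Fourier transform, and membership in $\tilde{\mathbb{C}}_{0,p}^{\infty }$ gives boundedness and $L_{p}$-integrability of all $x$-derivatives but imposes no pointwise decay, so your claim that $\hat{w}(t,\cdot )$ is well-defined as a function ``because of the decay'' is not justified; moreover $\psi ^{\pi }$ is not smooth, so multiplying a tempered distribution by $\psi ^{\pi }$ also requires care. A cleaner fix, compatible with what the paper does (it simply refers uniqueness to Lemma 8 of \cite{MPh} for the deterministic problem), is the semigroup/Duhamel trick: for fixed $t$ set $v(s)=T_{t-s}^{\lambda }w(s)$, $0\leq s\leq t$; since $w$ is smooth with bounded derivatives and $T^{\lambda }$ commutes with $L^{\pi }$ on such functions, $\partial _{s}v(s)=-T_{t-s}^{\lambda }(L^{\pi }-\lambda )w(s)+T_{t-s}^{\lambda }(L^{\pi }-\lambda )w(s)=0$, hence $w(t)=v(t)=v(0)=T_{t}^{\lambda }w(0)=0$. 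Your reduction to the deterministic equation (the martingale parts cancel when subtracting two solutions) is correct and is the same reduction the paper uses.
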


\begin{proof}
Uniqueness is a simple repeat of the proof of Lemma 8 in \cite{MPh}. We
prove that $u_{1},u_{2}$ solve the corresponding equations by repeating the
proofs of Lemma 8 in \cite{MPh} and Lemma 15\ in \cite{Mph2}. Let $\Phi \in 
\mathbb{\tilde{\mathcal{\mathbb{C}}}}_{2,p}^{\infty }\left( E\right) \cap {%
\tilde{\mathcal{\mathbb{C}}}}_{p,p}^{\infty }\left( E\right) $ if $p\in %
\left[ 2,\infty \right) $ or $\Phi \in {\tilde{\mathcal{\mathbb{C}}}}%
_{p,p}^{\infty }\left( E\right) $ if $p\in (1,2]$. Recall that $\Phi =\Phi
\chi _{U_{n}}$ for some $U_{n}\in \mathcal{U}$ with $\pi \left( U_{n}\right)
<\infty $. Let $v=u_{3}=\tilde{R}_{\lambda }\Phi $. A simple application of
Ito formula and Fubini theorem show that $\mathbf{P}$-a.s.%
\begin{eqnarray*}
&&v\left( t,x\right) \\
&=&\int_{0}^{t}e^{-\lambda \left( t-s\right) }\int_{U}\int \Phi \left(
s,x+y,z\right) P_{t-s}\left( dy\right) q\left( ds,dz\right) \\
&=&\int_{0}^{t}\int_{U}\Phi \left( s,x,z\right) q\left( ds,dz\right)
+\int_{0}^{t}\int_{s}^{t}e^{-\lambda \left( r-s\right) }\times \\
&&\times \int_{U}\int [L^{\pi }\Phi \left( s,x+y,z\right) -\lambda \Phi
\left( s,x+y,z\right) ]P_{r-s}\left( dy\right) drq\left( ds,dz\right) \\
&=&\int_{0}^{t}\int_{U}\Phi \left( s,x,z\right) q\left( ds,dz\right)
+\int_{0}^{t}\left[ L^{\pi }v\left( s,x\right) -\lambda v\left( s.x\right) %
\right] ds,\left( t,x\right) \in E.
\end{eqnarray*}
\end{proof}

\section{Main estimate}

Let

\begin{equation*}
G_{s,t}^{\lambda }\left( x\right) =\exp \left( -\lambda \left( t-s\right)
\right) p^{\pi ^{\ast }}\left( t-s,x\right) ,0<s<t,x\in \mathbf{R}^{d},
\end{equation*}%
where $\pi ^{\ast }\left( dy\right) =\pi \left( -dy\right) $, and%
\begin{equation*}
u\left( t,x\right) =\int_{0}^{t}\int_{U}G_{s,t}^{\lambda }\ast \Phi \left(
s,x,\nu \right) q\left( ds,d\nu \right) ,\left( t,x\right) \in E.
\end{equation*}

The main estimate for the solution with smooth input functions is the
following statement.

\begin{lemma}
\label{lem-smoothEst2} Let $\pi ,\mu \in ${$\mathfrak{A}$}$^{\sigma }$.
Assume there is a scaling function $\kappa $ with a scaling factor $l$ such
that $D\left( \kappa ,l\right) $ and $\beta \left( \kappa ,l\right) $ hold
for both, $\pi $ and $\mu $. Let $\Phi \in \mathbb{\tilde{\mathcal{\mathbb{C}%
}}}_{2}^{\infty }\left( E\right) \cap {\tilde{\mathcal{\mathbb{C}}}}%
_{p}^{\infty }\left( E\right) $, for $p\in \left[ 2,\infty \right) $ and $%
\Phi \in {\tilde{\mathcal{\mathbb{C}}}}_{p}^{\infty }\left( E\right) $ for $%
p\in (1,2)$. Assume%
\begin{equation*}
\int_{1}^{\infty }\frac{1}{\gamma \left( t\right) ^{\beta _{0}}}\frac{dt}{t}%
<\infty
\end{equation*}%
for some $\beta _{0}<\alpha _{2}$. Then

\begin{eqnarray*}
\left\vert L^{\mu }u\right\vert _{\mathbb{L}_{p}\left( E\right) } &\leq
&C\left( \left\vert L^{\mu ;\frac{1}{2}}\Phi \right\vert _{\mathbb{L}%
_{2,p}\left( E\right) }+\left\vert \Phi \right\vert _{\mathbb{B}_{p,pp}^{1-%
\frac{1}{p}}\left( E\right) }\right) ,\hspace{1em}p\in \left[ 2,\infty
\right) \\
\left\vert L^{\mu }u\right\vert _{\mathbb{L}_{p}\left( E\right) } &\leq
&C\left\vert \Phi \right\vert _{\mathbb{B}_{p,pp}^{1-\frac{1}{p}}\left(
E\right) },\hspace{1em}p\in (1,2),
\end{eqnarray*}

where $C=C\left( \kappa ,l,p,d\right) $.
\end{lemma}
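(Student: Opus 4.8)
Since $L^{\mu }$ is a Fourier multiplier it commutes with convolution and with the stochastic integral, so for $\Phi $ in the smooth class $\tilde{\mathbb{C}}^{\infty }$ of the statement we have
\begin{equation*}
L^{\mu }u(t,x)=\int_{0}^{t}\int_{U}e^{-\lambda (t-s)}\bigl( L^{\mu }p^{\pi ^{\ast }}(t-s,\cdot )\bigr) \ast \Phi (s,\cdot ,\nu )(x)\,q(ds,d\nu ),\quad (t,x)\in E,
\end{equation*}
with singular kernel $K_{s,t}(x)=e^{-\lambda (t-s)}L^{\mu }p^{\pi ^{\ast }}(t-s,x)$. The plan is to apply Kunita's inequality, which for $p\geq 2$ splits $\mathbf{E}\int |L^{\mu }u(t,\cdot )|^{p}$ into a square-function term $B(t)$ built from $\int_{0}^{t}\int_{U}|K_{s,t}\ast \Phi (s,\cdot ,\nu )|^{2}\,\Pi (d\nu )\,ds$ and a $p$-variation term $D(t)$ built from $\int_{0}^{t}\int_{U}|K_{s,t}\ast \Phi (s,\cdot ,\nu )|^{p}\,\Pi (d\nu )\,ds$ (only $D(t)$ remains when $p<2$), and then to estimate each piece by a Littlewood--Paley/Calderon--Zygmund analysis keyed to the parabolic balls $Q_{\rho }(t_{0},x_{0})=(t_{0}-\kappa (\rho ),t_{0}]\times \{|x-x_{0}|<\rho \}$ of $\pi $. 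The key reduction is Lemma \ref{al1}: the exact scaling $p^{\pi }(t,x)=a(t)^{-d}p^{\widetilde{\pi }_{a(t)}}(1,x/a(t))$ with $a(t)=\inf \{r:\kappa (r)\geq t\}$ turns all kernel estimates into bounds for $p^{\widetilde{\pi }_{R}}(1,\cdot )$ that are uniform in $R$, the uniformity being furnished by \textbf{B}$(\kappa ,l)$ for $\widetilde{\pi }_{R}$ and $\widetilde{\mu }_{R}$.

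I would dispose of the $D(t)$ term first, as it is present for all $p>1$ and produces the Besov summand. Decomposing $\Phi =\sum_{j}\varphi _{j}\ast \Phi $ and using that the frequency-localized kernel $L^{\mu }p^{\pi ^{\ast }}(\tau ,\cdot )\ast \varphi _{j}$ has $L_{1}$-norm at most $C\kappa (N^{-j})^{-1}e^{-c\tau /\kappa (N^{-j})}$ --- a consequence of the symbol estimates underlying Lemma \ref{al2} together with \textbf{B}$(\kappa ,l)$ --- Young's inequality and almost-orthogonality bound $\int_{0}^{T}\!\int_{0}^{t}\!\int_{U}|K_{s,t}\ast \Phi (s,\cdot ,\nu )|_{L_{p}}^{p}\Pi (d\nu )\,ds\,dt$ by $C\sum_{j}\kappa (N^{-j})^{-(1-1/p)p}\int_{0}^{T}|\varphi _{j}\ast \Phi (s,\cdot )|_{L_{p}(\mathbf{R}^{d},V_{p})}^{p}ds$, i.e.\ by $C|\Phi |_{\tilde{B}_{p,pp}^{1-1/p}(E)}^{p}$; taking $\mathbf{E}$ gives the $\mathbb{B}_{p,pp}^{1-1/p}(E)$ bound, which already settles $p\in (1,2)$.

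The substance is the $B(t)$ term for $p\geq 2$, which I would handle by the stochastic Calderon--Zygmund theorem (Theorem \ref{stochHormander}) applied to the $V_{2}$-valued singular integral $\Psi \mapsto \bigl( \int_{0}^{t}|g_{s,t}\ast \Psi (s,\cdot ,\nu )|^{2}\Pi (d\nu )\,ds\bigr) ^{1/2}$ with $\Psi =L^{\mu ;1/2}\Phi $ and $g_{s,t}=e^{-\lambda (t-s)}L^{\mu ;1/2}p^{\pi ^{\ast }}(t-s,\cdot )$, so that $K_{s,t}=L^{\mu ;1/2}g_{s,t}$ (the odd part of $L^{\mu }$ being absorbed as a comparable correction; this, and the fact that Lemmas \ref{al1}--\ref{mvt} are stated for symmetric $\mu $, forces a routine extension of those lemmas to the general $\mu $ of the statement). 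The $L_{2}$ input --- which also gives the case $p=2$ --- follows from It\^{o}'s isometry and Plancherel:
\begin{equation*}
|L^{\mu }u|_{\mathbb{L}_{2}(E)}^{2}=\mathbf{E}\int_{0}^{T}\!\int_{0}^{t}\!\int_{U}\!\int |\psi ^{\mu }(\xi )|^{2}e^{-2\lambda (t-s)}e^{2\psi ^{\pi _{sym}}(\xi )(t-s)}|\hat{\Phi}(s,\xi ,\nu )|^{2}\,d\xi \,\Pi (d\nu )\,ds\,dt,
\end{equation*}
where integrating first in $t$ bounds $\int_{s}^{T}e^{(2\psi ^{\pi _{sym}}(\xi )-2\lambda )(t-s)}dt$ by $(2|\psi ^{\pi _{sym}}(\xi )|)^{-1}$ and the symbol comparisons $|\psi ^{\mu }(\xi )|\asymp |\psi ^{\mu _{sym}}(\xi )|\asymp |\psi ^{\pi _{sym}}(\xi )|$ forced by \textbf{D}$(\kappa ,l)$, \textbf{B}$(\kappa ,l)$ (cf.\ \cite{MPh}, \cite{Mph2}) then give $|L^{\mu }u|_{\mathbb{L}_{2}(E)}\leq C|L^{\mu ;1/2}\Phi |_{\mathbb{L}_{2,2}(E)}$. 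The H\"{o}rmander condition has the two parts supplied by Lemma \ref{mvt}: spatially, $\int |g_{s,t}(x-y)-g_{s,t}(x)|\,dx\leq C|y|/(\sqrt{t-s}\,a(t-s))$ (part a), whose $t$-integral outside $Q_{\rho }$ converges exactly because $\int_{1}^{\infty }\gamma (t)^{-\beta _{0}}dt/t<\infty $ with $\beta _{0}<\alpha _{2}$ controls the large-scale tail after changing variables through $\kappa $ and $a$; and temporally, in the square-integrated form $\int_{2a}^{\infty }(\int |L^{\mu ;1/2}p^{\pi }(t-s,x)-L^{\mu ;1/2}p^{\pi }(t,x)|\,dx)^{2}dt\leq CM$ for $|s|\leq a$ (part b), as dictated by the $L_{2}$ character of the stochastic integral in time. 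Theorem \ref{stochHormander} then upgrades the $L_{2}$ bound to $\int_{0}^{T}B(t)\,dt\leq C|L^{\mu ;1/2}\Phi |_{\mathbb{L}_{2,p}(E)}^{p}$ for $p\geq 2$, and adding the $D(t)$ estimate completes the proof. Throughout $e^{-\lambda (t-s)}\leq 1$, so every constant is independent of $\lambda \geq 0$, and tracking the dependences gives $C=C(\kappa ,l,p,d)$.

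I expect the main obstacle to be the bookkeeping in the $B(t)$ step: fitting the spatial and square-integrated temporal cancellations of Lemma \ref{mvt} to the single ball family $Q_{\rho }$, verifying that the small-scale decay (from \textbf{B}$(\kappa ,l)$) and the large-scale decay (from $\int_{1}^{\infty }\gamma (t)^{-\beta _{0}}dt/t<\infty $) combine into a finite off-diagonal sum with constant depending only on $(\kappa ,l,p,d)$, and legitimately reducing the general, possibly non-symmetric $\mu $ of the statement to the symmetric setting of Lemmas \ref{al1}--\ref{mvt} by peeling off and separately controlling $L^{\mu }-L^{\mu _{sym}}$.
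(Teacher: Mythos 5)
Your proposal reproduces the paper's own proof in its essential structure. The paper also decomposes the stochastic integral via Kunita's inequality (for $p\geq 2$) and the Mikulevi\v{c}ius--Xu $L_p$-inequality \cite{MF} (for $1<p<2$) into the square-function piece ($I_{1}$, your $B(t)$) and the $p$-variation piece ($I_{2}$, your $D(t)$); estimates $I_{2}$ by the Besov norm via Proposition 1 of \cite{Mph2} (you outline essentially the same calculation by hand via Littlewood--Paley and $L_{1}$-kernel bounds, which is what that proposition encapsulates); establishes the $L_{2}$ case of the square-function bound by Plancherel exactly as you do; and then upgrades to $L_{p}$, $p>2$, through Theorem \ref{stochHormander} by verifying the stochastic H\"{o}rmander condition (\ref{eq:stochHormander}) with the kernel $K_{\lambda }^{\varepsilon }(t,x)=e^{-\lambda t}L^{\mu ;1/2}p^{\pi ^{\ast }}(t,x)\chi _{[\varepsilon ,\infty ]}(t)$, splitting into $\mathcal{I}_{1},\mathcal{I}_{2,1},\mathcal{I}_{2,2}$ following \cite{KK2} and invoking precisely Lemma \ref{mvt} a) and b) together with Lemma \ref{al2} and the $\gamma ^{-\beta _{0}}$ integrability hypothesis. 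Your remark that $L^{\mu }=L^{\mu ;1/2}\circ L^{\mu ;1/2}$ actually reproduces $L^{\mu _{sym}}$ rather than $L^{\mu }$, and that passing between them requires a norm equivalence (Lemma 14 of \cite{Mph2}), is an accurate reading of a point the paper treats implicitly; it does not alter the argument. The one place where you add commentary beyond the paper is the suggested almost-orthogonality estimate on the frequency-localized kernel for the $D(t)$ term, which the paper instead delegates wholesale to the deterministic Proposition 1 of \cite{Mph2}.
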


\subsubsection{Proof of Lemma \protect\ref{lem-smoothEst2}}

Let 
\begin{equation*}
G_{s,t}^{\lambda ,\varepsilon }\left( x\right) =\exp \left( -\lambda \left(
t-s\right) \right) p^{\pi ^{\ast }}\left( t-s,x\right) \chi _{\left[
\varepsilon ,\infty \right] }\left( t-s\right) ,0<s<t,x\in \mathbf{R}^{d},
\end{equation*}%
where $\pi ^{\ast }\left( dy\right) =\pi \left( -dy\right) $. Denote for $%
\varepsilon >0,$ 
\begin{eqnarray*}
Q\left( t,x\right) &=&\int_{0}^{t}\int_{U}\tilde{\Phi}_{\varepsilon }\left(
s,x,\nu \right) q\left( ds,d\nu \right) ,\left( t,x\right) \in E, \\
\tilde{\Phi}_{\varepsilon }\left( s,x,\nu \right) &=&\int \left( L^{\mu
}G_{s,t}^{\lambda ,\varepsilon }\right) \left( x-y\right) \Phi \left(
s,y,\nu \right) dy,\left( s,x\right) \in E.
\end{eqnarray*}

Obviously, with $K_{\lambda }^{\varepsilon }\left( t,x\right) =e^{-\lambda
t}L^{\pi ;\frac{1}{2}}p^{\pi ^{\ast }}\left( t,x\right) \chi _{\left[
\varepsilon ,\infty \right] }\left( t\right) ,t>0,x\in \mathbf{R}^{d},$ we
have

\begin{eqnarray}
&&\mathbf{E}\left\vert \int_{0}^{t}\int_{U}\int \left( L^{\mu
}G_{s,t}^{\lambda ,\varepsilon }\right) \left( x-y\right) \Phi \left(
s,y,\nu \right) dyq\left( ds,d\nu \right) \right\vert _{L_{p}\left( E\right)
}^{p}  \label{ff1} \\
&=&\mathbf{E}\left\vert \int_{0}^{t}\int_{U}\int L^{\mu
;1/2}G_{s,t}^{\lambda ,\varepsilon }\left( x-y\right) L^{\mu ;1/2}\Phi
\left( s,y,\nu \right) dyq\left( ds,d\nu \right) \right\vert _{L_{p}\left(
E\right) }^{p}  \notag \\
&=&\mathbf{E}\left\vert \int_{0}^{t}\int_{U}\int K_{\lambda }^{\varepsilon
}\left( t-s,x-y\right) L^{\mu ;1/2}\Phi \left( s,y,\nu \right) dyq\left(
ds,d\nu \right) \right\vert _{L_{p}\left( E\right) }^{p}.  \notag
\end{eqnarray}

If $2\leq p<\infty $, then

\begin{eqnarray*}
&&\mathbf{E}\int_{0}^{T}\left\vert Q\left( t,\cdot \right) \right\vert
_{L_{p}\left( \mathbf{R}^{d}\right) }^{p}dt \\
&\leq &C\mathbf{E}\left\{ \int_{0}^{T}\left\vert \left[ \int_{0}^{t}\int_{U}%
\tilde{\Phi}_{\varepsilon }\left( s,\cdot ,\nu \right) ^{2}\Pi \left( d\nu
\right) ds\right] ^{1/2}\right\vert _{L_{p}\left( \mathbf{R}^{d}\right)
}^{p}dt\right\} \\
&+&C\mathbf{E}\left\{ \int_{0}^{T}\int_{0}^{t}\int_{U}\left\vert \tilde{\Phi}%
_{\varepsilon }\left( s,\cdot ,\nu \right) \right\vert _{L_{p}\left( \mathbf{%
R}^{d}\right) }^{p}\Pi \left( d\nu \right) dsdt\right\} =C\left( \mathbf{E}%
I_{1}+\mathbf{E}I_{2}\right) .
\end{eqnarray*}

If $1<p<2$, then by Lemma 9 (see also Remark 1 therein) in \cite{MF},

\begin{equation*}
\mathbf{E}\int_{0}^{T}\left\vert Q\left( t,\cdot \right) \right\vert
_{L_{p}\left( \mathbf{R}^{d}\right) }^{p}dt\leq C\mathbf{E}I_{2}.
\end{equation*}

\emph{Estimate of} $\mathbf{E}I_{2}$. Let $B_{t}^{\lambda }g\left( x\right)
=e^{-\lambda t}\mathbf{E}g\left( x+Z_{t}^{\pi }\right) ,\left( t,x\right)
\in E,g\in \tilde{C}_{0,p}^{\infty }\left( \mathbf{R}^{d}\right) $. Then

\begin{eqnarray*}
I_{2} &=&\int_{0}^{T}\int_{0}^{t}\int_{U}\left\vert \tilde{\Phi}%
_{\varepsilon }\left( s,\cdot ,\nu \right) \right\vert _{L_{p}\left( \mathbf{%
R}^{d}\right) }^{p}\Pi \left( d\nu \right) dsdt \\
&=&\int_{0}^{T}\int_{0}^{t}\int_{U}\left\vert \left( L^{\mu
}G_{s,t}^{\lambda ,\varepsilon }\right) \ast \Phi \left( s,\cdot ,\nu
\right) \right\vert _{L_{p}\left( \mathbf{R}^{d}\right) }^{p}\Pi \left( d\nu
\right) dsdt \\
&\leq &\int_{0}^{T}\int_{0}^{t}\int_{U}\left\vert L^{\mu }B_{t-s}^{\lambda
}\Phi \left( s,\cdot ,\nu \right) \right\vert _{L_{p}\left( \mathbf{R}%
^{d}\right) }^{p}\Pi \left( d\nu \right) dsdt \\
&=&\int_{U}\int_{0}^{T}\int_{s}^{T}\left\vert L^{\mu }B_{t-s}^{\lambda }\Phi
\left( s,\cdot ,\nu \right) \right\vert _{L_{p}\left( \mathbf{R}^{d}\right)
}^{p}dtds\Pi \left( d\nu \right)
\end{eqnarray*}

It follows from Proposition 1 (see section 4.4 as well) of \cite{Mph2} that
for $p>1,$

\begin{eqnarray*}
\mathbf{E}I_{2} &\leq &\mathbf{E}\int_{U}\int_{0}^{T}\int_{s}^{T}\left\vert
L^{\mu }B_{t-s}^{\lambda }\Phi \left( s,\cdot ,\nu \right) \right\vert
_{L_{p}\left( \mathbf{R}^{d}\right) }^{p}dtds\Pi \left( d\nu \right) \\
&\leq &C\mathbf{E}\int_{U}\int_{0}^{T}\sum_{j=0}^{\infty }\left\vert \kappa
\left( N^{-j}\right) ^{-\left( 1-1/p\right) }\left\vert \Phi \left( s,\cdot
,\nu \right) \ast \varphi _{j}\right\vert _{L_{p}\left( \mathbf{R}%
^{d}\right) }\right\vert ^{p}ds\Pi \left( d\nu \right) \\
&=&C\left\vert \Phi \right\vert _{\mathbb{B}_{p,pp}^{\mu ,N;1-1/p}\left(
E\right) }^{p}.
\end{eqnarray*}

\paragraph{Estimate of $\mathbf{E}I_{1}$}

It is enough to show that

\begin{eqnarray}
I &=&\int \int \left\{ \int \left\vert K_{\lambda }^{\varepsilon }\left(
t-s,\cdot \right) \ast \Phi \left( s,\cdot \right) \left( x\right)
\right\vert _{V_{2}}^{2}ds\right\} ^{\frac{p}{2}}dxdt  \label{ff2} \\
&\leq &C\int \int \left( \left\vert \Phi \left( t,x\right) \right\vert
_{V_{2}}^{2}\right) ^{p/2}dxdt,~\Phi \in \tilde{C}_{2,p}^{\infty }\left( 
\mathbf{R}^{d+1}\right) .  \notag
\end{eqnarray}

where $V_{2}=L_{2}\left( U,\mathcal{U},\Pi \right) $, $C$ is independent of $%
\varepsilon $ and $\Phi $.

For $p=2$, by Plancherel's theorem and Fubini's theorem, denoting $\hat{\Phi}%
=\mathcal{F}\Phi $,

\begin{eqnarray*}
&&I=\int_{\mathbf{R}^{d+1}}\int \left\vert L^{\mu ;\frac{1}{2}%
}G_{s,t}^{\lambda }\ast \Phi \left( s,x\right) \right\vert _{V_{2}}^{2}dsdxdt
\\
&=&\int_{0}^{T}\int \int_{0}^{t-\varepsilon }\int_{U}\exp \left\{ 2\left(
\psi ^{\pi }\left( \xi \right) -\lambda \right) \left( t-s\right) \right\}
\left\vert \psi ^{\mu }\left( \xi \right) \right\vert \left\vert \hat{\Phi}%
\left( s,\xi ,z\right) \right\vert ^{2}\Pi \left( dz\right) dsd\xi dt \\
&\leq &C\int_{0}^{T}\int \int_{U}\left\vert \hat{\Phi}\left( s,\xi ,z\right)
\right\vert ^{2}\Pi \left( dz\right) dsd\xi =C\int_{0}^{T}\int
\int_{U}\left\vert \Phi \left( s,x,z\right) \right\vert ^{2}\Pi \left(
dz\right) dxds
\end{eqnarray*}

Hence (\ref{ff2}) follows for $p=2$. \ 

Next we prove (\ref{ff2}) for $p>2$. According to Lemma \ref{stochHormander}
(see Appendix), it is sufficient to show that there exists $C_{0}>0$ such
that for all $\left\vert s\right\vert \leq \kappa \left( \delta \right)
,\left\vert y\right\vert \leq \delta ,\delta >0$, we have%
\begin{equation}
\mathcal{I}=\int \left[ \int \chi _{Q_{C_{0}\delta }\left( 0\right)
^{c}}\left\vert K_{\lambda }^{\varepsilon }\left( t-s,x-y\right) -K_{\lambda
}^{\varepsilon }\left( t,x\right) \right\vert dx\right] ^{2}dt\leq N,
\label{hc}
\end{equation}%
where $Q_{C_{0}\delta }\left( 0\right) =\left( -\kappa \left( C_{0}\delta
\right) ,\kappa \left( C_{0}\delta \right) \right) \times \left\{
x:\left\vert x\right\vert <C_{0}\delta \right\} .$

\subparagraph{Verification of H\"{o}rmander condition (\protect\ref{hc})}

Let

\begin{eqnarray*}
a\left( r\right) &=&\inf \left\{ t:\kappa \left( t\right) \geq r\right\}
,r>0, \\
a^{-1}\left( s\right) &=&\inf \left\{ t:a\left( t\right) \geq s\right\} ,s>0,
\\
\gamma \left( t\right) &=&\inf \left\{ r:l\left( r\right) \geq t\right\}
,t>0.
\end{eqnarray*}%
It follows from Lemma 9 in \cite{Mph2} that 
\begin{eqnarray*}
a^{-1}\left( r\right) &=&\sup_{s\leq r}\kappa \left( s\right) \leq l\left(
1\right) \kappa \left( r\right) ,r>0, \\
a^{-1}\left( r\varepsilon \right) &\leq &l\left( \varepsilon \right)
a^{-1}\left( r\right) ,\varepsilon ,r>0.
\end{eqnarray*}

and 
\begin{equation*}
a\left( \varepsilon r\right) \geq a\left( r\right) \gamma \left( \varepsilon
\right) ,r,\varepsilon >0.
\end{equation*}

In particular, $\gamma \left( \varepsilon \right) \leq a\left( \varepsilon
\right) a\left( 1\right) ^{-1},$ and 
\begin{equation}
\frac{a\left( r\right) }{a\left( r^{\prime }\right) }\leq \gamma \left( 
\frac{r^{\prime }}{r}\right) ^{-1},r^{\prime },r>0.  \label{hc1}
\end{equation}

Let $C_{0}>3$ and $3l\left( 1\right) l\left( C_{0}^{-1}\right) <1$. Now, we
follow the splitting in \cite{KK2}. Let%
\begin{equation*}
\mathcal{I}=\int_{-\infty }^{2\left\vert s\right\vert }\left[ \int ...\right]
^{2}dt+\int_{2\left\vert s\right\vert }^{\infty }\left[ \int ...\right]
^{2}dt=\mathcal{I}_{1}+\mathcal{I}_{2}.
\end{equation*}

Since $\kappa \left( C_{0}\delta \right) >3\kappa \left( \delta \right)
,\delta >0$, it follows by Lemma 9 in \cite{Mph2} (see (\ref{hc1}), and
Lemma \ref{al2}, with $k_{0}=C_{0}-1$ and $\beta \in (\frac{\beta _{0}}{2},%
\frac{\alpha _{2}}{2})$,\ 

\begin{eqnarray*}
\left\vert \mathcal{I}_{1}\right\vert &\leq &C\int_{0}^{3\left\vert
s\right\vert }\left[ \int_{\left\vert x\right\vert >k_{0}a\left( \left\vert
s\right\vert \right) }\left\vert L^{\mu ;\frac{1}{2}}p^{\pi ^{\ast }}\left(
t,x\right) \right\vert dx\right] ^{2}dt \\
&\leq &C\int_{0}^{3\left\vert s\right\vert }\left( t^{-\frac{1}{2}}a\left(
t\right) ^{\beta }\left( k_{0}a\left( \left\vert s\right\vert \right)
\right) ^{-\beta }\right) ^{2}dt\leq C\int_{0}^{3\left\vert s\right\vert
}\left( \frac{a\left( t\right) }{a\left( s\right) }\right) ^{2\beta }\frac{dt%
}{t} \\
&\leq &C\int_{0}^{3\left\vert s\right\vert }\frac{1}{\gamma \left( \frac{s}{t%
}\right) ^{2\beta }}\frac{dt}{t}\leq C\int_{1/3}^{\infty }\frac{1}{\gamma
\left( t\right) ^{2\beta }}\frac{dt}{t}
\end{eqnarray*}

Now, 
\begin{eqnarray*}
&&\left\vert \mathcal{I}_{2}\right\vert \\
&\leq &2\int_{2\left\vert s\right\vert }^{\infty }\left[ \int \chi
_{Q_{C_{0}\delta }^{c}\left( 0\right) }\left\vert L^{\mu ;\frac{1}{2}}p^{\pi
^{\ast }}\left( t-s,x-y\right) -L^{\mu ;\frac{1}{2}}p^{\pi ^{\ast }}\left(
t-s,x\right) \right\vert dx\right] ^{2}dt \\
&+&2\int_{2\left\vert s\right\vert }^{\infty }\left\{ \int \chi
_{Q_{C_{0}\delta }^{c}\left( 0\right) }|\chi _{\left[ \varepsilon ,\infty %
\right] }\left( t-s\right) L^{\mu ;\frac{1}{2}}p^{\pi ^{\ast }}\left(
t-s,x\right) -\chi _{\left[ \varepsilon ,\infty \right] }\left( t\right)
L^{\mu ;\frac{1}{2}}p^{\pi ^{\ast }}\left( t,x\right) |dx\right\} ^{2}dt \\
&=&\mathcal{I}_{2,1}+\mathcal{I}_{2,2}.
\end{eqnarray*}

We split the estimate of $\mathcal{I}_{2,1}$ into two cases.

\emph{Case 1.} Assume $\left\vert y\right\vert \leq a\left( 2\left\vert
s\right\vert \right) $. Then by Lemma \ref{mvt}a),%
\begin{eqnarray*}
&&\mathcal{I}_{2,1} \\
&\leq &C\int_{2\left\vert s\right\vert }^{\infty }\frac{\left\vert
y\right\vert ^{2}}{(t-s)a\left( t-s\right) ^{2}}dt \\
&\leq &C\left\vert y\right\vert ^{2}a\left( 2\left\vert s\right\vert \right)
^{-2}\int_{2\left\vert s\right\vert }^{\infty }\frac{a\left( 2\left\vert
s\right\vert \right) ^{2}}{a\left( t-s\right) ^{2}}\left( t-s\right) ^{-1}dt
\\
&\leq &C\int_{2\left\vert s\right\vert }^{\infty }\gamma \left( \frac{t-s}{%
2\left\vert s\right\vert }\right) ^{-2}\left( t-s\right) ^{-1}dt\leq
C\int_{1/2}^{\infty }\gamma \left( r\right) ^{-2}\frac{dr}{r}
\end{eqnarray*}

\emph{Case 2.} Assume $\left\vert y\right\vert >a\left( 2\left\vert
s\right\vert \right) $ i.e. $\delta \geq \left\vert y\right\vert >a\left(
2\left\vert s\right\vert \right) $ and $a^{-1}\left( \delta \right) \geq
a^{-1}\left( \left\vert y\right\vert \right) \geq 2\left\vert s\right\vert $%
. We split 
\begin{equation*}
\mathcal{I}_{2,1}=\int_{2\left\vert s\right\vert }^{2\left\vert s\right\vert
+a^{-1}\left( \left\vert y\right\vert \right) }\left[ \int ...\right]
^{2}+\int_{2\left\vert s\right\vert +a^{-1}\left( \left\vert y\right\vert
\right) }^{\infty }\left[ \int ...\right] ^{2}=\mathcal{I}_{2,1,1}+\mathcal{I%
}_{2,1,2}.
\end{equation*}

If $2\left\vert s\right\vert \leq t\leq 2\left\vert s\right\vert
+a^{-1}\left( \left\vert y\right\vert \right) $, then $0\leq t\leq
3a^{-1}\left( \delta \right) \leq 3l\left( 1\right) \kappa \left( \delta
\right) \leq \kappa \left( C_{0}\delta \right) $. Hence, $\left\vert
x\right\vert >C_{0}\delta \geq a\left( 2\left\vert s\right\vert \right)
+\left\vert y\right\vert $ and 
\begin{eqnarray*}
\left\vert x-y\right\vert &\geq &\left( C_{0}-1\right) \delta =k_{0}\delta
\geq \frac{k_{0}}{2}\left[ a\left( 2\left\vert s\right\vert \right)
+\left\vert y\right\vert \right] \\
&\geq &a\left( 2\left\vert s\right\vert \right) +\left\vert y\right\vert
\quad \text{if}\quad \left( t,x\right) \notin Q_{C_{0}\delta }\left(
0\right) .
\end{eqnarray*}

Also, 
\begin{equation}
2\geq \frac{2\left\vert s\right\vert +a^{-1}\left( \left\vert y\right\vert
\right) }{2\left\vert s\right\vert +a^{-1}\left( \left\vert y\right\vert
\right) -s}\geq \frac{2}{3}  \label{eq:1}
\end{equation}

and by Lemma 9 of \cite{Mph2},%
\begin{eqnarray}
&&\frac{a\left( 2\left\vert s\right\vert +a^{-1}\left( \left\vert
y\right\vert \right) \right) }{a\left( 2\left\vert s\right\vert \right)
+\left\vert y\right\vert }  \label{eq:2} \\
&\leq &\frac{a\left( 2a^{-1}\left( \left\vert y\right\vert \right) \right) }{%
a\left( 2\left\vert s\right\vert \right) +\left\vert y\right\vert }\leq
\gamma \left( 2^{-1}\right) ^{-1}\frac{a\left( a^{-1}\left( \left\vert
y\right\vert \right) \right) }{a\left( 2\left\vert s\right\vert \right)
+\left\vert y\right\vert }\leq \gamma \left( 2^{-1}\right) ^{-1}.  \notag
\end{eqnarray}

Hence, with $\beta \in (\frac{\beta _{0}}{2},\frac{\alpha _{2}}{2})$, by
Lemma 9 in \cite{Mph2} (see (\ref{hc1})), Lemma \ref{al2}, (\ref{eq:1}) and (%
\ref{eq:2}),\ 
\begin{eqnarray*}
\mathcal{I}_{2,1,1} &\leq &C\int_{2\left\vert s\right\vert }^{2\left\vert
s\right\vert +a^{-1}\left( \left\vert y\right\vert \right) }\left[
\int_{\left\vert x\right\vert >a\left( 2\left\vert s\right\vert \right)
+\left\vert y\right\vert }\left\vert L^{\mu ;\frac{1}{2}}p^{\pi ^{\ast
}}\left( t-s,x\right) \right\vert dx\right] ^{2}dt \\
&\leq &\frac{C}{\left( a\left( 2\left\vert s\right\vert \right) +\left\vert
y\right\vert \right) ^{2\beta }}\int_{2\left\vert s\right\vert
}^{2\left\vert s\right\vert +a^{-1}\left( \left\vert y\right\vert \right)
}a\left( t-s\right) ^{2\beta }\frac{dt}{\left( t-s\right) } \\
&\leq &C\frac{a\left( 2\left\vert s\right\vert +a^{-1}\left( \left\vert
y\right\vert \right) \right) ^{2\beta }}{\left[ a\left( 2\left\vert
s\right\vert \right) +\left\vert y\right\vert \right] ^{2\beta }}%
\int_{2\left\vert s\right\vert }^{2\left\vert s\right\vert +a^{-1}\left(
\left\vert y\right\vert \right) }\frac{a\left( t-s\right) ^{2\beta }}{%
a\left( 2\left\vert s\right\vert +a^{-1}\left( \left\vert y\right\vert
\right) \right) ^{2\beta }}\frac{dt}{t-s} \\
&\leq &C\int_{2\left\vert s\right\vert }^{2\left\vert s\right\vert
+a^{-1}\left( \left\vert y\right\vert \right) }\gamma \left( \frac{%
2\left\vert s\right\vert +a^{-1}\left( \left\vert y\right\vert \right) }{t-s}%
\right) ^{-2\beta }\frac{dt}{t-s} \\
&\leq &C\int_{2/3}^{\infty }\gamma \left( r\right) ^{-2\beta }\frac{dr}{r}
\end{eqnarray*}

Then by Lemma \ref{mvt}a) and (\ref{eq:2}), 
\begin{eqnarray*}
\mathcal{I}_{2,1,2} &\leq &C\int_{2\left\vert s\right\vert +a^{-1}\left(
\left\vert y\right\vert \right) }^{\infty }\left[ \int_{\mathbf{R}%
^{d}}\left\vert L^{\pi ;\frac{1}{2}}p^{\pi ^{\ast }}\left( t-s,x-y\right)
-L^{\pi ;\frac{1}{2}}p^{\pi ^{\ast }}\left( t-s,x\right) \right\vert dx%
\right] ^{2}dt \\
&\leq &C\int_{2\left\vert s\right\vert +a^{-1}\left( \left\vert y\right\vert
\right) }^{\infty }\frac{\left\vert y\right\vert ^{2}}{a\left( t-s\right)
^{2}}\frac{dt}{t-s} \\
&=&C\frac{\left\vert y\right\vert ^{2}}{a\left( 2\left\vert s\right\vert
+a^{-1}\left( \left\vert y\right\vert \right) \right) ^{2}}\int_{2\left\vert
s\right\vert +a^{-1}\left( \left\vert y\right\vert \right) }^{\infty }\left(
t-s\right) ^{-1}\gamma \left( \frac{t-s}{2\left\vert s\right\vert
+a^{-1}\left( \left\vert y\right\vert \right) }\right) ^{-2}dt \\
&\leq &C\int_{1/2}^{\infty }\gamma \left( r\right) ^{-2}\frac{dr}{r},
\end{eqnarray*}

because 
\begin{equation*}
\frac{\left\vert y\right\vert }{a\left( 2\left\vert s\right\vert
+a^{-1}\left( \left\vert y\right\vert \right) \right) }\leq \frac{a\left(
a^{-1}\left( \left\vert y\right\vert \right) +\right) }{a\left( 2\left\vert
s\right\vert +a^{-1}\left( \left\vert y\right\vert \right) \right) }\leq 1.
\end{equation*}

Hence, $\mathcal{I}_{2,1}\leq C$. Since

\begin{eqnarray*}
\mathcal{I}_{2,2} &\leq &\int_{2\left\vert s\right\vert }^{\infty }\left[
\int \chi _{Q_{C_{0}\delta }^{c}\left( 0\right) }\left\vert L^{\mu
;1/2}p^{\pi ^{\ast }}\left( t-s,x\right) -L^{\mu ;\frac{1}{2}}p^{\pi ^{\ast
}}\left( t,x\right) \right\vert dx\right] ^{2}dt \\
&+&\int_{\varepsilon \vee \left\vert s\right\vert }^{\varepsilon +\left\vert
s\right\vert }\left[ \int \left\vert L^{\mu ;\frac{1}{2}}p^{\pi ^{\ast
}}\left( t,x\right) \right\vert dx\right] ^{2}dt \\
&=&\mathcal{I}_{2,2,1}+\mathcal{I}_{2,2,2},
\end{eqnarray*}%
it follows by Lemma \ref{mvt}b) that%
\begin{eqnarray*}
\mathcal{I}_{2,2,1} &\leq &\int_{2\left\vert s\right\vert }^{\infty }\left[
\int \chi _{Q_{C_{0}\delta }^{c}\left( 0\right) }\left\vert L^{\mu
;1/2}p^{\pi ^{\ast }}\left( t-s,x\right) -L^{\mu ;\frac{1}{2}}p^{\pi ^{\ast
}}\left( t,x\right) \right\vert dx\right] ^{2}dt \\
&\leq &C.
\end{eqnarray*}

Clearly, $\mathcal{I}_{2,2,2}\leq C$ as well. Hence (\ref{hc}) and (\ref{ff2}%
) are proved. Combining the estimates of $I_{1}$ and $I_{2}$, we deduce that
the claim of Lemma \ref{lem-smoothEst2} holds.

\subsection{ Proof of Theorem \protect\ref{t1}}

We finish the proof of Theorem \ref{t1} in a standard way. Since by
Proposition 1 and 2 of \cite{Mph2}, $J_{\mu }^{t}:\mathbb{H}_{p}^{\mu
;s}\left( \mathbf{R}^{d},l_{2}\right) \rightarrow \mathbb{H}_{p}^{\mu
;s-t}\left( \mathbf{R}^{d},l_{2}\right) $ and $J_{\mu }^{t}:\mathbb{B}%
_{pp}^{\mu ,N;s}\left( \mathbf{R}^{d}\right) \rightarrow \mathbb{B}%
_{pp}^{\mu ,N;s-t}\left( \mathbf{R}^{d}\right) $ is an isomorphism for any $%
s,t\in \mathbf{R}$, it is enough to derive the statement for $s=0.$ Let $%
f\in \mathbb{L}_{p}\left( E\right) ,g\in \mathbb{B}_{pp}^{\mu
,N;1-1/p}\left( \mathbf{R}^{d}\right) ,$ and 
\begin{eqnarray*}
\Phi &\in &\mathbb{B}_{p,pp}^{\mu ,N;1-1/p}\left( E\right) \cap \mathbb{H}%
_{2,p}^{\mu ;\frac{1}{2}}\left( E\right) \text{ if }p\geq 2, \\
\Phi &\in &\mathbb{B}_{p,pp}^{\mu ,N;1-1/p}\left( E\right) \text{ if }p\in
\left( 1,2\right) .
\end{eqnarray*}

According to Lemma \ref{lem-denseSubspace}, there are sequences $f_{n}\in 
\tilde{\mathbb{C}}_{0,p}^{\infty }\left( E\right) ,g_{n}\in \tilde{\mathbb{C}%
}_{0,p}^{\infty }\left( \mathbf{R}^{d}\right) ,\Phi _{n}\in \tilde{\mathbb{C}%
}_{2,p}^{\infty }\left( E\right) \cap \tilde{\mathbb{C}}_{p,p}^{\infty
}\left( E\right) $ if $p\geq 2$, and $\Phi _{n}\in \tilde{\mathbb{C}}%
_{p,p}^{\infty }\left( E\right) $ if $p\in \left( 1,2\right) $, such that 
\begin{equation*}
f_{n}\rightarrow f\text{ in }\mathbb{L}_{p}\left( E\right) ,g_{n}\rightarrow
g\text{ in }\mathbb{B}_{pp}^{\mu ,N;1-1/p}\left( \mathbf{R}^{d}\right) ,
\end{equation*}%
and%
\begin{eqnarray*}
\Phi _{n} &\rightarrow &\Phi \text{ in }\mathbb{B}_{p,pp}^{\mu
,N;1-1/p}\left( E\right) \cap \mathbb{H}_{2,p}^{\mu ;\frac{1}{2}}\left(
E\right) \text{ if }p\geq 2, \\
\Phi _{n} &\rightarrow &\Phi \text{ in }\mathbb{B}_{p,pp}^{\mu
,N;1-1/p}\left( E\right) \text{ if }p\in \left( 1,2\right) .
\end{eqnarray*}%
For each $n,$ there is unique $u_{n}\in \tilde{\mathbb{C}}_{0,p}^{\infty
}\left( E\right) $ solving (\ref{mainEq}). Hence for $u_{n,m}=u_{n}-u_{m,}$
we have 
\begin{eqnarray*}
\partial _{t}u_{n,m} &=&\left( L^{\pi }-\lambda \right)
u_{n,m}+f_{n}-f_{m}+\int_{U}\left( \Phi _{n}-\Phi _{m}\right) q\left(
dt,d\nu \right) , \\
u_{n,m}\left( 0,x\right)  &=&g_{n}\left( x\right) -g_{m}\left( x\right)
,x\in \mathbf{R}^{d}.
\end{eqnarray*}%
By Lemma \ref{lem:basicEst2}, (4.15), (4.20) in \cite{Mph2}, and Lemma \ref%
{lem-smoothEst2}, for $p\geq 2,$ 
\begin{eqnarray*}
&&\left\vert L^{\mu }u_{n,m}\right\vert _{\mathbb{L}_{p}\left( E\right)
}\leq C[\left\vert f_{n}-f_{m}\right\vert _{\mathbb{L}_{p}\left( E\right)
}+\left\vert g_{n}-g_{m}\right\vert _{\mathbb{B}_{pp}^{\mu ,N;1-1/p}\left( 
\mathbf{R}^{d}\right) } \\
&&+\left\vert \Phi _{n}-\Phi _{m}\right\vert _{\mathbb{B}_{p,pp}^{\mu
,N;1-1/p}\left( E\right) }+\left\vert \Phi _{n}-\Phi _{m}\right\vert _{%
\mathbb{H}_{2,p}^{\mu ;1/2}\left( E\right) }],
\end{eqnarray*}%
and 
\begin{eqnarray*}
&&\left\vert L^{\mu }u_{n,m}\right\vert _{\mathbb{L}_{p}\left( E\right)
}\leq C[\left\vert f_{n}-f_{m}\right\vert _{\mathbb{L}_{p}\left( E\right)
}+\left\vert g_{n}-g_{m}\right\vert _{\mathbb{B}_{pp}^{\mu ,N;1-1/p}\left( 
\mathbf{R}^{d}\right) } \\
&&+\left\vert \Phi _{n}-\Phi _{m}\right\vert _{\mathbb{B}_{p,pp}^{\mu
,N;1-1/p}\left( E\right) }
\end{eqnarray*}%
if $p\in \left( 1,2\right) $. 

By Lemma \ref{lem-basicLpEst}, 
\begin{eqnarray*}
\left\vert u_{n,m}\right\vert _{\mathbb{L}_{p}\left( E\right) } &\leq
&C[\rho _{\lambda }\left\vert f_{n}-f_{m}\right\vert _{\mathbb{L}_{p}\left(
E\right) }+\rho _{\lambda }^{1/p}\left\vert g_{n}-g_{m}\right\vert _{\mathbb{%
L}_{p}\left( \mathbf{R}^{d}\right) } \\
&&+\rho _{\lambda }^{1/p}\left\vert \Phi _{n}-\Phi _{m}\right\vert _{\mathbb{%
L}_{p,p}\left( E\right) }+\rho _{\lambda }^{1/2}\left\vert \Phi _{n}-\Phi
_{m}\right\vert _{\mathbb{L}_{2,p}\left( E\right) }]
\end{eqnarray*}%
if $p\geq 2$, and 
\begin{eqnarray*}
\left\vert u_{n,m}\right\vert _{\mathbb{L}_{p}\left( E\right) } &\leq
&C[\rho _{\lambda }\left\vert f_{n}-f_{m}\right\vert _{\mathbb{L}_{p}\left(
E\right) }+\rho _{\lambda }^{1/p}\left\vert g_{n}-g_{m}\right\vert _{\mathbb{%
L}_{p}\left( \mathbf{R}^{d}\right) } \\
&&+\rho _{\lambda }^{1/p}\left\vert \Phi _{n}-\Phi _{m}\right\vert _{\mathbb{%
L}_{p,p}\left( E\right) }]
\end{eqnarray*}%
if $p\in \left( 1,2\right) $. Hence there is $u\in \mathbb{H}_{p}^{\mu
;1}\left( E\right) $ so that $u_{n}\rightarrow u$ in $\mathbb{H}_{p}^{\mu
;1}\left( E\right) $. Moreover, by Lemma \ref{lem-basicLpEst}, 
\begin{equation}
\sup_{t\leq T}\left\vert u_{n}\left( t\right) -u\left( t\right) \right\vert
_{\mathbb{L}_{p}\left( \mathbf{R}^{d}\right) }\rightarrow 0,  \label{fo7}
\end{equation}%
and $u$ is $L_{p}\left( \mathbf{R}^{d}\right) $-valued continuous. According
to Lemma 14 of \cite{Mph2}, 
\begin{equation}
\left\vert L^{\pi }f\right\vert _{\mathbb{L}_{p}\left( E\right) }\leq
C\left\vert L^{\mu }f\right\vert _{\mathbb{L}_{p}\left( E\right) },f\in 
\tilde{\mathbb{C}}_{0,p}^{\infty }\left( E\right) .  \label{fo8}
\end{equation}

By Lemma \ref{lem-stochInt} (see Appendix) and Remark \ref{re0}, 
\begin{equation}
\sup_{t\leq T}\left\vert \int_{0}^{t}\int_{U}\Phi _{n}q\left( ds,dz\right)
-\int_{0}^{t}\int_{U}\Phi q\left( ds,dz\right) \right\vert _{L_{p}\left( 
\mathbf{R}^{d}\right) }\rightarrow 0  \label{fo80}
\end{equation}%
as $n\rightarrow \infty $ in probability.

Hence (see (\ref{fo7})-(\ref{fo80})) we can pass to the limit in the
equation 
\begin{equation}
u_{n}\left( t\right) =g_{n}+\int_{0}^{t}[L^{\pi }u_{n}\left( s\right)
-\lambda u_{n}\left( s\right) +f_{n}\left( s\right)
]ds+\int_{0}^{t}\int_{U}\Phi _{n}q\left( ds,dz\right) ,0\leq t\leq T.
\label{fo9}
\end{equation}%
Obviously, (\ref{fo9}) holds for $u,g$ and $f,\Phi $. We proved the
existence part of Theorem \ref{t1}.

\emph{Uniqueness. }Assume $u_{1},u_{2}\in \mathbb{H}_{p}^{\mu ;1}\left(
E\right) $ solve (\ref{mainEq})$.$ Then $u=u_{1}-u_{2}\in \mathbb{H}%
_{p}^{\mu ;1}\left( E\right) $ solves (\ref{mainEq}) with $f=0,g=0,\Phi =0$.
Thus the uniqueness follows from the uniqueness of a deterministic equation
(see \cite{Mph2}).

Theorem \ref{t1} is proved.

\section{Appendix}

\subsection{Stochastic Integral}

We discuss here the definition of stochastic integrals with respect to a
martingale measure. Let $\left( \Omega ,\mathcal{F},\mathbf{P}\right) $ be a
complete probability space with a filtration of $\sigma -$algebras on $%
\mathbb{F}=\left( \mathcal{F}_{t},t\geq 0\right) $ satisfying the usual
conditions. Let $\left( U,\mathcal{U},\Pi \right) $ be a measurable space
with $\sigma -$finite measure $\Pi $, $\mathbf{R}_{0}^{d}=\mathbf{R}%
^{d}\backslash \left\{ 0\right\} $. Let $p\left( dt,d\nu \right) $ be $%
\mathbb{F}-$adapted point measures on $\left( \left[ 0,\infty \right) \times
U,\mathcal{B}\left( \left[ 0,\infty \right) \right) \otimes \mathcal{U}%
\right) $ with compensator $\Pi \left( d\nu \right) dt.$ We denote the
martingale measure $q\left( dt,d\nu \right) =p\left( dt,d\nu \right) -\Pi
\left( d\nu \right) dt$.

We prove the following based on Lemma 12 from \cite{MP1}.

\begin{lemma}
\label{lem-stochInt}Let $s\in \mathbf{R}$, $\Phi \in \mathbb{H}%
_{2,p}^{s}\left( E\right) \cap \mathbb{H}_{p,p}^{s}\left( E\right) $ for $%
p\in \left[ 2,\infty \right) $ and $\Phi \in \mathbb{H}_{p,p}^{s}\left(
E\right) $ for $p\in \left[ 1,2\right) $. There is a unique cadlag $%
H_{p}^{s}\left( \mathbf{R}^{d}\right) -$valued process 
\begin{equation*}
M\left( t\right) =\int_{0}^{t}\int \Phi \left( r,x,\nu \right) q\left(
dr,d\nu \right) ,0\leq t\leq T,x\in \mathbf{R}^{d},
\end{equation*}

such that for every $\varphi \in \mathcal{S}\left( \mathbf{R}^{d}\right) $, 
\begin{equation}
\left\langle M\left( t\right) ,\varphi \right\rangle =\int_{0}^{t}\int
\left( \int J^{s}\Phi \left( r,\cdot ,\nu \right) J^{-s}\varphi dx\right)
q\left( dr,d\nu \right) ,0\leq t\leq T.  \label{eq:unique}
\end{equation}

Moreover, there is a constant $C$ independent of $\Phi $ such that 
\begin{eqnarray}
&&\mathbf{E}\sup_{t\leq T}\left\vert \int_{0}^{t}\int \Phi \left( r,\cdot
,\nu \right) q\left( dr,d\nu \right) \right\vert _{H_{p}^{s}\left( \mathbf{R}%
^{d}\right) }  \label{eq:normEst} \\
&\leq &C\sum_{j=2,p}\left\vert \Phi \right\vert _{\mathbb{H}_{j,p}^{s}\left(
E\right) },p\geq 2,  \notag \\
&&\mathbf{E}\sup_{t\leq T}\left\vert \int_{0}^{t}\int \Phi \left( r,\cdot
,\nu \right) q\left( dr,d\nu \right) \right\vert _{H_{p}^{s}\left( \mathbf{R}%
^{d}\right) }  \notag \\
&\leq &C\left\vert \Phi \right\vert _{\mathbb{H}_{p,p}^{s}\left( E\right)
},p\in \left( 1,2\right) .  \notag
\end{eqnarray}
\end{lemma}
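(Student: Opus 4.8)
The plan is to reduce to the case $s=0$ by conjugating with $J^{s}=\left( I-L^{\mu _{sym}}\right) ^{s}$ and then to invoke Lemma 12 of \cite{MP1} for the $L_{p}\left( \mathbf{R}^{d}\right) $-valued stochastic integral so produced. Recall that, by the definition of the Bessel potential norm and the theory cited from \cite{fjs}, $J^{s}:H_{p}^{s}\left( \mathbf{R}^{d},V\right) \rightarrow L_{p}\left( \mathbf{R}^{d},V\right) $ is an isometric isomorphism for every Banach space $V$, with inverse $J^{-s}$; since its symbol $\left( 1-\psi ^{\mu _{sym}}\right) ^{s}$ is real and even, $J^{\pm s}$ is self-adjoint on $\mathcal{S}\left( \mathbf{R}^{d}\right) $. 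I would set $\Psi =J^{s}\Phi $; because $J^{s}$ acts only on $x$, $\Psi \in \mathbb{L}_{2,p}\left( E\right) \cap \mathbb{L}_{p,p}\left( E\right) $ when $p\in \left[ 2,\infty \right) $ and $\Psi \in \mathbb{L}_{p,p}\left( E\right) $ when $p\in \left[ 1,2\right) $, with $\left\vert \Psi \right\vert _{\mathbb{L}_{j,p}\left( E\right) }=\left\vert \Phi \right\vert _{\mathbb{H}_{j,p}^{s}\left( E\right) }$, $j=2,p$.

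\textbf{The case $s=0$ and transfer back.} Next I would apply Lemma 12 of \cite{MP1} to $\Psi $ — first for simple $V_{r}$-valued predictable integrands, then extending to the whole space by density using the estimates below — to obtain a unique cadlag $L_{p}\left( \mathbf{R}^{d}\right) $-valued process $N\left( t\right) =\int_{0}^{t}\int \Psi \left( r,\cdot ,\nu \right) q\left( dr,d\nu \right) $, $0\leq t\leq T$, characterized by
\begin{equation*}
\left\langle N\left( t\right) ,\phi \right\rangle =\int_{0}^{t}\int \left( \int \Psi \left( r,x,\nu \right) \phi \left( x\right) dx\right) q\left( dr,d\nu \right) ,\quad \phi \in \mathcal{S}\left( \mathbf{R}^{d}\right) ,
\end{equation*}
and satisfying $\mathbf{E}\sup_{t\leq T}\left\vert N\left( t\right) \right\vert _{L_{p}\left( \mathbf{R}^{d}\right) }\leq C\sum_{j=2,p}\left\vert \Psi \right\vert _{\mathbb{L}_{j,p}\left( E\right) }$ for $p\geq 2$ and $\mathbf{E}\sup_{t\leq T}\left\vert N\left( t\right) \right\vert _{L_{p}\left( \mathbf{R}^{d}\right) }\leq C\left\vert \Psi \right\vert _{\mathbb{L}_{p,p}\left( E\right) }$ for $p\in \left( 1,2\right) $, with $C$ independent of $\Psi $; for $p\geq 2$ the two-term bound is Kunita's inequality applied inside $L_{p}\left( \mathbf{R}^{d}\right) $ together with Minkowski's inequality, just as in the proof of Lemma \ref{lem-basicLpEst}. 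Then I would set $M\left( t\right) :=J^{-s}N\left( t\right) $: since $J^{-s}$ is a fixed bounded operator on the space variable, $M$ is cadlag with values in $H_{p}^{s}\left( \mathbf{R}^{d}\right) $ and $\left\vert M\left( t\right) \right\vert _{H_{p}^{s}}=\left\vert N\left( t\right) \right\vert _{L_{p}}$, so (\ref{eq:normEst}) follows from the displayed bounds and from $\left\vert \Psi \right\vert _{\mathbb{L}_{j,p}\left( E\right) }=\left\vert \Phi \right\vert _{\mathbb{H}_{j,p}^{s}\left( E\right) }$.

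\textbf{Duality and uniqueness.} To verify (\ref{eq:unique}), fix $\varphi \in \mathcal{S}\left( \mathbf{R}^{d}\right) $ and use the self-adjointness of $J^{-s}$ together with the defining identity for $N$, extended from $\mathcal{S}\left( \mathbf{R}^{d}\right) $ to the test function $J^{-s}\varphi $ by approximation (legitimate because $N\left( t\right) \in L_{p}\left( \mathbf{R}^{d}\right) $ and $\phi \mapsto \int_{0}^{t}\int \left( \int \Psi \phi \,dx\right) q$ is continuous in the relevant norm), to get
\begin{equation*}
\left\langle M\left( t\right) ,\varphi \right\rangle =\left\langle N\left( t\right) ,J^{-s}\varphi \right\rangle =\int_{0}^{t}\int \left( \int J^{s}\Phi \left( r,\cdot ,\nu \right) J^{-s}\varphi \,dx\right) q\left( dr,d\nu \right) .
\end{equation*}
For uniqueness, if $M^{\prime }$ is another cadlag $H_{p}^{s}\left( \mathbf{R}^{d}\right) $-valued process satisfying (\ref{eq:unique}), then $\left\langle M\left( t\right) -M^{\prime }\left( t\right) ,\varphi \right\rangle =0$ for all $t$ and all $\varphi $ in a fixed countable dense subset of $\mathcal{S}\left( \mathbf{R}^{d}\right) $, $\mathbf{P}$-a.s., by right-continuity; as this family separates $H_{p}^{s}\left( \mathbf{R}^{d}\right) $, it follows that $M=M^{\prime }$ up to indistinguishability.

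\textbf{Main obstacle.} The analytic heart is the maximal $L_{p}\left( \mathbf{R}^{d}\right) $-estimate for the integral against $q$, i.e.\ Lemma 12 of \cite{MP1}, which rests on Doob's inequality combined with the Burkholder--Davis--Gundy / Kunita inequality, integrated in $x$ by Minkowski; this is the step that would require the most care to cite or reprove precisely. The only subtlety in the reduction itself is that conjugation by the (generally non-smooth) Fourier multiplier $J^{\pm s}$, which acts solely on the space variable and commutes with integration in $\left( r,\nu \right) $, preserves all the norms involved — transparent for simple integrands, and inherited in the limit from the estimate — after which (\ref{eq:unique}) pins down the limit process uniquely.
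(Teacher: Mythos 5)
Your proposal is correct and follows essentially the same route as the paper: reduce to the case $s=0$ (the paper invokes Proposition~2 of \cite{Mph2}, you make the conjugation by $J^{s}$ explicit), invoke the $L_{p}(\mathbf{R}^{d})$-valued stochastic integral theory at $s=0$ (the paper rebuilds it via the approximating sequence $\Phi_{n}$ of Lemma~\ref{lem-approximation1}, then cites \cite{mp2} and Lemma~9 of \cite{MF}; you cite Lemma~12 of \cite{MP1}/Kunita's inequality directly), and transfer back via $J^{-s}$, with the duality identity pinning down uniqueness. Your presentation is a touch more modular, and you correctly note the only subtlety — that the symbol $(1-\psi^{\mu_{\mathrm{sym}}})^{s}$ is not smooth, so $J^{-s}\varphi\notin\mathcal{S}$ and the identity \eqref{eq:unique} must be obtained as a limit from test functions, which is exactly how the paper's passage to the limit in \eqref{eq:stoch3} works.
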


\begin{proof}
According to Proposition 2 in \cite{Mph2}, it is enough to consider the case 
$s=0$. Let $\Phi _{n}$ be a sequence defined in Lemma \ref%
{lem-approximation1} that approximates $\Phi $. Note first that by Lemma \ref%
{lem-approximation1}, for all $x$, 
\begin{equation*}
\mathbf{E}\int_{0}^{T}\sup_{x}\int \left\vert D^{\gamma }\Phi _{n}\left(
r,x,\nu \right) \right\vert ^{p}\Pi \left( d\nu \right) dr<\infty .
\end{equation*}%
Recall for each $n$, $\ \,$we have $\Phi _{n}=\Phi _{n}\chi _{U_{k}}$ for
some $U_{k}\in \mathcal{U}$ with $\Pi \left( U_{k}\right) <\infty $.
Consequently, we define for each $x\in \mathbf{R}^{d}$ and $\mathbf{P}$-a.s.
for all $\left( t,x\right) \in E,$ 
\begin{eqnarray*}
M_{n}\left( t,x\right) &=&\int_{0}^{t}\int \Phi _{n}\left( r,x,z\right)
q\left( dr,dz\right) \\
&=&\int_{0}^{t}\int \Phi _{n}\left( r,x,z\right) p\left( dr,dz\right)
-\int_{0}^{t}\int \Phi _{n}\left( r,x,z\right) \Pi \left( dz\right) dr.
\end{eqnarray*}%
Obviously, \ $M^{n}\left( t,x\right) $ is cadlag in $t$ and infinitely
differentiable in $x$. Obviously,$M_{n}\left( t\right) =M_{n}\left( t,\cdot
\right) $ is $\mathbb{L}_{p}\left( \mathbf{R}^{d}\right) $-valued cadlag
and, according to \cite{mp2}, there is a constant $C$ independent of $\Phi
_{n}$ such that 
\begin{equation}
\mathbf{E}\sup_{t\leq T}\left\vert M_{n}\left( t\right) \right\vert
_{L_{p}\left( \mathbf{R}^{d}\right) }^{p}\leq C\mathbf{E}\sum_{j=2,p}\left%
\vert \Phi _{n}\right\vert _{L_{j,p}\left( E\right) }^{p},2\leq p<\infty .
\label{f1}
\end{equation}

By Lemma 9 of \cite{MF},

\begin{equation}
\mathbf{E}\sup_{t\leq T}\left\vert M_{n}\left( t\right) \right\vert
_{L_{p}\left( \mathbf{R}^{d}\right) }^{p}\leq C\mathbf{E}\left\vert \Phi
_{n}\right\vert _{L_{p,p}\left( E\right) }^{p},1<p<2.  \label{f2}
\end{equation}

In addition, by Fubini theorem, $\mathbf{P}$-a.s. for $0\leq t\leq T,\varphi
\in \mathcal{S}\left( \mathbf{R}^{d}\right) ,$ 
\begin{eqnarray}
\left\langle M_{n}\left( t\right) ,\varphi \right\rangle &=&\int M_{n}\left(
t,x\right) \varphi \left( x\right) dx  \label{eq:stoch3} \\
&=&\int_{0}^{t}\int \left( \int \Phi _{n}\left( r,x,\nu \right) \varphi
\left( x\right) dx\right) q\left( dr,d\nu \right) .  \notag
\end{eqnarray}

By Lemma \ref{lem-approximation1}, 
\begin{eqnarray*}
\mathbf{E}\sum_{j=2,p}\left\vert \Phi _{n}-\Phi \right\vert _{L_{j,p}\left( 
\mathbf{R}^{d}\right) }^{p} &\rightarrow &0,2\leq p, \\
\mathbf{E}\left\vert \Phi _{n}-\Phi \right\vert _{L_{p,p}\left( E\right)
}^{p} &\rightarrow &0,p\in (1,2).
\end{eqnarray*}

Similarly, for each $p\in \lbrack 2,\infty )$, 
\begin{equation*}
\mathbf{E}\sup_{t\leq T}\left\vert M_{n}\left( t\right) -M_{m}\left(
t\right) \right\vert _{L_{p}\left( \mathbf{R}^{d}\right) }^{p}\leq C\mathbf{E%
}\sum_{j=2,p}\left\vert \Phi _{n}-\Phi _{m}\right\vert _{L_{j,p}\left(
E\right) }^{p}\rightarrow 0,
\end{equation*}%
and for each $p\in \left( 1,2\right) $%
\begin{equation*}
\mathbf{E}\sup_{t\leq T}\left\vert M_{n}\left( t\right) -M_{m}\left(
t\right) \right\vert _{L_{p}\left( \mathbf{R}^{d}\right) }^{p}\leq C\mathbf{E%
}\left\vert \Phi _{n}-\Phi _{m}\right\vert _{L_{p,p}\left( E\right)
}^{p}\rightarrow 0,
\end{equation*}

as $n,m\rightarrow \infty $. Therefore there is an adapted cadlag $%
L_{p}\left( \mathbf{R}^{d}\right) $ -valued process $M\left( t\right) $ so
that 
\begin{equation*}
\mathbf{E}\sup_{t\leq T}\left\vert M_{n}\left( t\right) -M\left( t\right)
\right\vert _{L_{p}\left( \mathbf{R}^{d}\right) }^{p}\rightarrow 0
\end{equation*}

as $n\rightarrow \infty .$ Passing to the limit as $n\rightarrow \infty $ in
(\ref{f1}), (\ref{f2}) and (\ref{eq:stoch3}) we derive (\ref{eq:normEst}), (%
\ref{eq:unique}). Henceforth, we define $\int_{0}^{t}\int \Phi \left(
r,x,\nu \right) q\left( dr,d\nu \right) $ to be $M\left( t\right) $ in this
lemma.
\end{proof}

\subsection{Maximal and sharp functions, H\"{o}rmander condition}

Given a function $\kappa :\left( 0,\infty \right) \rightarrow \left(
0,\infty \right) $, consider the collection $\mathbb{Q}$ of sets $Q_{\delta
}=Q_{\delta }\left( t,x\right) =\left( t-\kappa \left( \delta \right)
,t+\kappa \left( \delta \right) \right) \times B_{\delta }\left( x\right)
,\left( t,x\right) \in \mathbf{R}\times \mathbf{R}^{d}=\mathbf{R}%
^{d+1},\delta >0$, where $B_{\delta }\left( x\right) $ is the standard open
ball of radius $\delta $ centered at $x$. The volume $\left\vert Q_{\delta
}\left( t,x\right) \right\vert =c_{0}\kappa \left( \delta \right) \delta
^{d}.$ We will need the following assumptions.

\textbf{A1. }$\kappa$ is continuous, $\lim_{\delta\rightarrow0}\kappa\left(%
\delta\right)=0$ and $\lim_{\delta\rightarrow\infty}\kappa\left(\delta%
\right)=\infty.$

\textbf{A2.} There is a nondecreasing continuous function $%
l\left(\varepsilon\right),\varepsilon>0,$ such that $\lim_{\varepsilon%
\rightarrow0}l\left(\varepsilon\right)=0$ and 
\begin{equation*}
\kappa\left(\varepsilon r\right)\leq
l\left(\varepsilon\right)\kappa(r),r>0,\varepsilon>0.
\end{equation*}

Since $Q_{\delta}\left(t,x\right)$ not exactly increases in $\delta$, we
present the basic estimates involving maximal functions based on the system $%
\mathbb{Q=}\left\{ Q_{\delta}\right\} $.

We state the following engulfing property from \cite{MPh}.

\begin{lemma}
\label{le1}Let \textbf{A}2\textbf{\ }hold. If $Q_{\delta}\left(t,x\right)%
\cap Q_{\delta^{\prime}}\left(r,z\right)\neq\emptyset$ with $%
\delta^{\prime}\leq\delta$, then there is $K_{0}\geq3$ such that $%
Q_{K_{0}\delta}\left(t,x\right)$ contains both, $Q_{\delta}\left(t,x\right)$
and $Q_{\delta^{\prime}}\left(r,z\right),$ and 
\begin{equation*}
\left\vert Q_{\delta}\left(t,x\right)\right\vert \leq\left\vert
Q_{K_{0}\delta}\left(t,x\right)\right\vert \leq
K_{0}^{d}l\left(K_{0}\right)\left\vert Q_{\delta}\left(t,x\right)\right\vert
.
\end{equation*}
\end{lemma}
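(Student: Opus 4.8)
The plan is to verify the three assertions (spatial inclusion, temporal inclusion, volume comparison) by direct estimates with the triangle inequality; the only care needed is that $\kappa$ is \emph{not} assumed monotone, so the scaling bound in \textbf{A2} must be invoked twice — once to dominate $\kappa(\delta')$ by a multiple of $\kappa(\delta)$ (comparison from above) and once to bound $\kappa(K_0\delta)$ from below by a multiple of $\kappa(\delta)$ (comparison from below).

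First I would record two normalizations. Taking $\varepsilon=1$ in \textbf{A2} gives $\kappa(r)\le l(1)\kappa(r)$ for all $r>0$, hence $l(1)\ge1$; and $l(\varepsilon)>0$ for every $\varepsilon>0$, for otherwise $\kappa(\varepsilon r)\le 0$, contradicting $\kappa>0$. Now pick a point in $Q_{\delta}(t,x)\cap Q_{\delta'}(r,z)$. Its space coordinate $w$ satisfies $|w-x|<\delta$ and $|w-z|<\delta'$, so $|x-z|<\delta+\delta'\le 2\delta$; consequently for $|y-z|<\delta'$ we get $|y-x|<\delta'+2\delta\le 3\delta$, i.e. $B_{\delta'}(z)\subseteq B_{3\delta}(x)$, and trivially $B_{\delta}(x)\subseteq B_{3\delta}(x)$.

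For the time coordinate, the same point furnishes $\tau$ with $|\tau-t|<\kappa(\delta)$ and $|\tau-r|<\kappa(\delta')$, hence $|t-r|<\kappa(\delta)+\kappa(\delta')$. Writing $\delta'=(\delta'/\delta)\delta$ with $\delta'/\delta\le 1$ and using \textbf{A2} together with monotonicity of $l$ gives $\kappa(\delta')\le l(\delta'/\delta)\kappa(\delta)\le l(1)\kappa(\delta)$. Therefore any $s$ in the time interval of $Q_{\delta'}(r,z)$ satisfies $|s-t|<\kappa(\delta')+|r-t|<2\kappa(\delta')+\kappa(\delta)\le(1+2l(1))\kappa(\delta)$, and the same bound holds (trivially) for the time interval of $Q_{\delta}(t,x)$.

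It remains to choose $K_0$. Applying \textbf{A2} with $r=K_0\delta$ and $\varepsilon=1/K_0$ gives $\kappa(\delta)\le l(1/K_0)\kappa(K_0\delta)$, i.e. $\kappa(K_0\delta)\ge \kappa(\delta)/l(1/K_0)$. Since $l(1/K_0)\to 0$ as $K_0\to\infty$, I can fix an integer $K_0\ge 3$ with $l(1/K_0)\le (1+2l(1))^{-1}$, so that $\kappa(K_0\delta)\ge(1+2l(1))\kappa(\delta)$ for every $\delta>0$. Combined with the previous paragraph, the time intervals of both $Q_{\delta}(t,x)$ and $Q_{\delta'}(r,z)$ lie in $\bigl(t-\kappa(K_0\delta),t+\kappa(K_0\delta)\bigr)$, and together with the spatial inclusions (where $K_0\ge 3$ enters) this yields $Q_{\delta}(t,x)\cup Q_{\delta'}(r,z)\subseteq Q_{K_0\delta}(t,x)$. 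Finally, $|Q_{K_0\delta}(t,x)|=c_0\kappa(K_0\delta)(K_0\delta)^d\le K_0^d l(K_0)\,c_0\kappa(\delta)\delta^d=K_0^d l(K_0)\,|Q_{\delta}(t,x)|$ by one more use of \textbf{A2}, while $|Q_{\delta}(t,x)|\le|Q_{K_0\delta}(t,x)|$ follows from $K_0\ge 1$ and $\kappa(K_0\delta)\ge\kappa(\delta)$. The only genuine subtlety is the non-monotonicity of $\kappa$ flagged at the start; everything else is the triangle inequality.
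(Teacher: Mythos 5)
Your proof is correct. The paper states this lemma without proof (it cites \cite{MPh}), so there is no in-paper argument to compare against; your argument is the natural one. The two observations you flag as the crux are indeed the only nontrivial points: (i) $\kappa(\delta')\le l(\delta'/\delta)\kappa(\delta)\le l(1)\kappa(\delta)$ uses the monotonicity of $l$ together with \textbf{A2} to replace the missing monotonicity of $\kappa$, and (ii) the lower bound $\kappa(K_0\delta)\ge l(1/K_0)^{-1}\kappa(\delta)$, obtained by applying \textbf{A2} with $\varepsilon=1/K_0$ and $r=K_0\delta$, lets you absorb the constant $1+2l(1)$ into $\kappa(K_0\delta)$ once $K_0$ is large enough that $l(1/K_0)\le(1+2l(1))^{-1}$, which is possible since $l(\varepsilon)\to0$ as $\varepsilon\to0$. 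You correctly note that $K_0$ depends only on $l$ (hence is uniform over all overlapping pairs with $\delta'\le\delta$), the spatial inclusion needs only $K_0\ge3$, and the volume comparison is a direct consequence of $\kappa(K_0\delta)\le l(K_0)\kappa(\delta)$ and $\kappa(K_0\delta)\ge\kappa(\delta)$, the latter already established in step (ii). No gaps.
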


\subsubsection{Maximal and sharp functions}

Following \cite{stein1}, for a locally integrable function $%
f\left(t,x\right) $ on $\mathbf{R}^{d+1}$ we define 
\begin{equation*}
\left(A_{\delta}f\right)(t,x)=\frac{1}{\left\vert
Q_{\delta}\left(t,x\right)\right\vert }\int_{Q_{\delta}\left(t,x\right)}f%
\left(s,y\right)dsdy,\left(t,x\right)\in\mathbf{R\times R}^{d},\delta>0
\end{equation*}
and the maximal function of $f$ by 
\begin{equation*}
\mathcal{M}f\left(t,x\right)=\sup_{\delta>0}\left(A_{\delta}\left\vert
f\right\vert \right)(t,x),\left(t,x\right)\in\mathbf{R}^{d+1}.
\end{equation*}
We use collection $\mathbb{Q}$ to define a larger, noncentered maximal
function of $f$, as 
\begin{equation*}
\widetilde{\mathcal{M}}f\left(t,x\right)=\sup_{\left(t,x\right)\in Q}\frac{1%
}{\left\vert Q\right\vert }\int_{Q}|f\left(s,y\right)|dsdy,\left(t,x\right)%
\in\mathbf{R}^{d+1},
\end{equation*}
where $\sup$ is taken over all $Q\in\mathbb{Q}$ that contain $(t,x).$

\begin{remark}
\label{re2}It is shown in \cite{MPh} that if \textbf{A2} hold then there
exists $K_{0}>0$ such that for a locally integrable $f$ on $\mathbf{R}%
^{d+1}, $ 
\begin{equation*}
\mathcal{M}f\leq\widetilde{\mathcal{M}}f\leq\frac{1}{K_{0}^{d}l\left(K_{0}%
\right)}\mathcal{M}f.
\end{equation*}
\end{remark}

The following result is proved in \cite{MPh}.

\begin{theorem}
\label{ft}Let \textbf{A2} hold and $f$ be measurable function on $\mathbf{R}%
^{d+1}=\mathbf{R\times R}^{d}.$

(a) If $f\in L_{p},1\leq p\leq\infty$, then $\mathcal{M}f$ is finite a.e.

(b) If $f\in L_{1}$, then for every $\alpha>0$, 
\begin{equation*}
\left\vert \left\{ \mathcal{M}f\left(t,x\right)>\alpha\right\} \right\vert
\leq\frac{c}{\alpha}\int|f|dtdx.
\end{equation*}

(c) If $f\in L_{p},1<p\leq \infty $, then $\mathcal{M}f\in L_{p}$ and 
\begin{equation*}
\left\vert \mathcal{M}f\right\vert _{L_{p}}\leq N_{p}\left\vert f\right\vert
_{L_{p}},
\end{equation*}%
where $N_{p}$ depends only on $p,l$ and $K_{0}.$
\end{theorem}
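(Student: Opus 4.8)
The plan is to run the classical proof of the Hardy--Littlewood maximal theorem, with the engulfing property of Lemma \ref{le1} playing the role of the elementary geometry of Euclidean balls. By Remark \ref{re2}, $\mathcal{M}f$ and the noncentered maximal function $\widetilde{\mathcal{M}}f$ are comparable up to the fixed factor $K_{0}^{d}l(K_{0})$, so it suffices to establish the weak type $(1,1)$ bound and the $L_{p}$ bound for $\widetilde{\mathcal{M}}f$. Granting (b) and (c), part (a) is then immediate: for $p=1$ from the weak type $(1,1)$ estimate (which forces $\mathcal{M}f<\infty$ a.e.), for $1<p<\infty$ from $\mathcal{M}f\in L_{p}$, and for $p=\infty$ trivially since $(A_{\delta}|f|)(t,x)\le|f|_{L_{\infty}}$ for every $\delta$, hence $\mathcal{M}f\le|f|_{L_{\infty}}$.

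For (b), fix $\alpha>0$ and set $E_{\alpha}=\{\widetilde{\mathcal{M}}f>\alpha\}\supseteq\{\mathcal{M}f>\alpha\}$. Each $(t,x)\in E_{\alpha}$ lies in some $Q\in\mathbb{Q}$ with $\int_{Q}|f|>\alpha|Q|$, whence $|Q|<\alpha^{-1}|f|_{L_{1}}$; since $|Q_{\delta}|=c_{0}\kappa(\delta)\delta^{d}\to\infty$ as $\delta\to\infty$, the scales $\delta$ of such boxes stay bounded above. Given any compact $K\subseteq E_{\alpha}$, choose a finite subcover $Q^{(1)},\dots,Q^{(M)}$ by such boxes and run the standard greedy selection: order the $Q^{(i)}$ by decreasing scale and pick successively a pairwise disjoint subfamily $Q_{1},\dots,Q_{m}$ such that every $Q^{(i)}$ meets some selected $Q_{j}$ with $\delta(Q_{j})\ge\delta(Q^{(i)})$. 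By Lemma \ref{le1}, $Q^{(i)}$ is then contained in the $K_{0}$-dilate $Q_{K_{0}\delta(Q_{j})}$ of $Q_{j}$, and $|Q_{K_{0}\delta(Q_{j})}|\le K_{0}^{d}l(K_{0})|Q_{j}|$. Therefore
\[
|K|\le\Big|\bigcup_{i}Q^{(i)}\Big|\le\sum_{j}K_{0}^{d}l(K_{0})|Q_{j}|\le\frac{K_{0}^{d}l(K_{0})}{\alpha}\sum_{j}\int_{Q_{j}}|f|\le\frac{K_{0}^{d}l(K_{0})}{\alpha}\int|f|\,dtdx,
\]
using disjointness of the $Q_{j}$ in the last step. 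Taking the supremum over compact $K\subseteq E_{\alpha}$ gives (b), with $c$ depending only on $l$ and $K_{0}$.

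For (c), the case $p=\infty$ is the trivial bound above. For $1<p<\infty$, the operator $f\mapsto\mathcal{M}f$ is sublinear, of weak type $(1,1)$ by (b), and of (strong, hence weak) type $(\infty,\infty)$, so the Marcinkiewicz interpolation theorem yields $|\mathcal{M}f|_{L_{p}}\le N_{p}|f|_{L_{p}}$ with $N_{p}$ depending only on $p$ and the two endpoint constants, i.e. only on $p,l,K_{0}$. The one genuinely delicate point is the Vitali selection in (b): because $\kappa$ need not be monotone, the boxes $Q_{\delta}$ are \emph{not} nested in $\delta$, so the usual ``$5B$'' covering lemma does not apply verbatim; Lemma \ref{le1} is precisely the substitute, guaranteeing that a smaller box meeting a larger one sits inside a $K_{0}$-dilate of the larger with only a bounded volume increase $K_{0}^{d}l(K_{0})$. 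The remaining ingredients---reduction to compact subsets, the interpolation step, and the easy cases of (a)---are routine.
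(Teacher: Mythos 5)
Your proof is correct and is the standard Hardy--Littlewood program transplanted to the anisotropic family $\mathbb{Q}$: the engulfing property of Lemma \ref{le1} replaces the Euclidean ``$5B$'' lemma in the Vitali selection for the weak $(1,1)$ estimate, the trivial $L_\infty$ bound gives the other endpoint, and Marcinkiewicz interpolation yields (c), with (a) following as a corollary. This is exactly the role the paper's own infrastructure (Lemma \ref{le1}, Remark \ref{re2}) is set up to play; the paper itself does not reproduce a proof here but defers to \cite{MPh}, and your argument is the expected one. Two small observations: you only actually need the inequality $\mathcal{M}f \le \widetilde{\mathcal{M}}f$ from Remark \ref{re2}, not the full two-sided comparison (which as printed in the paper appears to have the constant on the wrong side); and the boundedness-of-scales step in the Vitali selection relies on $\kappa(\delta)\delta^{d}\to\infty$, which does follow from \textbf{A2} alone (apply the scaling inequality in the form $\kappa(r/\varepsilon)\ge\kappa(r)/l(\varepsilon)$), so the hypothesis ``\textbf{A2}'' in the statement is sufficient even though \textbf{A1} is where the limit conditions are written explicitly.
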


\paragraph{Calderon-Zygmund decomposition, sharp functions}

Assume \textbf{A1, A2} hold. Let $F\subseteq \mathbf{R\times R}^{d}$ be
closed and $O=F^{c}=\mathbf{R}^{d+1}\backslash F.$ For $\left( t,x\right)
\in O$, let 
\begin{equation*}
D\left( t,x\right) =\inf \left\{ \delta >0:Q_{\delta }\left( t,x\right) \cap
F\neq \emptyset \right\} .
\end{equation*}
In \cite{MPh}, the following statement is proved.

\begin{lemma}
\label{wh}(Lemma 15 in \cite{MPh}) Assume \textbf{A1, A2} hold. Given a
closed nonempty $F$, there are sequences $Q^{k}$, $Q^{\ast k}$ and $Q^{\ast
\ast k}$ in $\mathbb{Q\,\ }$having the same center but with radius expanded
by the same factor $c_{1}^{\ast \ast }>c_{1}^{\ast }>c_{1}\,\ $so that $%
Q^{k}\subseteq Q^{\ast k}\subseteq Q^{\ast \ast k}$ (all of them are of the
form $Q_{bD\left( t_{k},x_{k}\right) }\left( t_{k},x_{k}\right) $ with $%
b=c_{1},c_{1}^{\ast },c_{1}^{\ast \ast }$ correspondingly) and

(a) the sets $Q^{k}$ are disjoint.

(b) $\cup_{k}Q^{\ast k}=O=F^{c}.$

(c) $Q^{\ast\ast k}\cap F\neq\emptyset$ for each $k$.
\end{lemma}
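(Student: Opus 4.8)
The plan is to run the Whitney--Calder\'on--Zygmund construction for the open set $O=F^{c}$, using the family $\mathbb{Q}$ of anisotropic cylinders $Q_{\delta}(t,x)$ in place of Euclidean balls and the engulfing property (Lemma \ref{le1}) in place of the monotonicity of balls in the radius. First I would record that $D(t,x)\in(0,\infty)$ for every $(t,x)\in O$: since $F$ is closed and $(t,x)\notin F$, \textbf{A1} gives $Q_{\delta}(t,x)\to\{(t,x)\}$ as $\delta\downarrow 0$, so $Q_{\delta}(t,x)\cap F=\emptyset$ for $\delta$ small and $D(t,x)>0$; and \textbf{A1} (with $\kappa(\delta)\to\infty$) shows $Q_{\delta}(t,x)$ eventually contains any fixed point of $F$, so $D(t,x)<\infty$. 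From \textbf{A2} one also gets the "almost monotonicity" $\kappa(s)\le l(1)\kappa(r)$ whenever $s\le r$ (take $\varepsilon=s/r\le1$ and use that $l$ is nondecreasing); together with Lemma \ref{le1} this replaces genuine nestedness of the $Q_{\delta}$ at every step below.

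The first substantial step I would carry out is a comparability estimate for $D$: for a suitable small $c_{1}\in(0,1)$ (depending only on $K_{0}$ and $l$), if $Q_{c_{1}D(t,x)}(t,x)\cap Q_{c_{1}D(t',x')}(t',x')\neq\emptyset$ then $D(t,x)$ and $D(t',x')$ are comparable, with constants depending on $K_{0}$ and $l$. The argument: if, say, $D(t',x')\le c_{1}D(t,x)$, then a point $(s,y)\in F$ witnessing $D(t',x')$ lies, after using Lemma \ref{le1} to control the separation of $(t,x)$ and $(t',x')$ and \textbf{A2} to convert the resulting spatial radii into temporal ones, inside $Q_{C^{\sharp}c_{1}D(t,x)}(t,x)$ for a constant $C^{\sharp}=C^{\sharp}(K_{0},l)$; choosing $c_{1}$ with $c_{1}C^{\sharp}<1$ contradicts the definition of $D(t,x)$, forcing $D(t',x')>c_{1}D(t,x)$, and the reverse bound is symmetric.

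With comparability available I would select, by Zorn's lemma (giving priority to larger scales), a maximal pairwise disjoint subfamily $\{Q^{k}=Q_{c_{1}D(t_{k},x_{k})}(t_{k},x_{k})\}$ of $\{Q_{c_{1}D(t,x)}(t,x):(t,x)\in O\}$; the $Q^{k}$ are disjoint open sets of positive Lebesgue measure in $\mathbf{R}^{d+1}$, hence the family is countable, which gives (a). For (b): given $(t,x)\in O$, maximality forces $Q_{c_{1}D(t,x)}(t,x)$ to meet some $Q^{k}$, the comparability estimate (together with the scale priority, which excludes the blocking cylinder being much smaller) gives that $D(t_{k},x_{k})$ is at least a fixed multiple of $D(t,x)$, and then Lemma \ref{le1} and the almost monotonicity of $\kappa$ place $(t,x)$ in $Q_{c_{1}^{*}D(t_{k},x_{k})}(t_{k},x_{k})$ for a fixed $c_{1}^{*}$; after shrinking $c_{1}$ one keeps $c_{1}<c_{1}^{*}<1$, so $\bigcup_{k}Q^{*k}=O$ and each $Q^{*k}\subseteq F^{c}$. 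For (c), I would pick $c_{1}^{**}$ so large that $l(1/c_{1}^{**})<1$ (possible since $l(\varepsilon)\to0$ as $\varepsilon\to0$), so $c_{1}^{**}>c_{1}^{*}$; since for $\delta$ arbitrarily close to $D(t_{k},x_{k})$ from above there is $(s,y)\in F\cap Q_{\delta}(t_{k},x_{k})$, one has $|y-x_{k}|<\delta<c_{1}^{**}D(t_{k},x_{k})$ and, by \textbf{A2}, $|s-t_{k}|<\kappa(\delta)\le l(\delta/(c_{1}^{**}D(t_{k},x_{k})))\,\kappa(c_{1}^{**}D(t_{k},x_{k}))<\kappa(c_{1}^{**}D(t_{k},x_{k}))$ once $\delta$ is close enough to $D(t_{k},x_{k})$, whence $(s,y)\in Q^{**k}$. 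The nesting $Q^{k}\subseteq Q^{*k}\subseteq Q^{**k}$ and the boundedness of the volume ratios then follow from $|Q_{\delta}(t,x)|=c_{0}\kappa(\delta)\delta^{d}$ together with \textbf{A2} and the continuity of $l$ evaluated at the fixed ratios $c_{1}^{*}/c_{1}$ and $c_{1}^{**}/c_{1}^{*}$.

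The step I expect to be the main obstacle is the selection together with the comparability estimate that feeds it. Because the cylinders $Q_{\delta}$ are not monotone in $\delta$, every inclusion must be extracted from Lemma \ref{le1} and from \textbf{A2}, with the attendant bookkeeping of the factors $l(\varepsilon)$; and because $D$ ranges over all of $(0,\infty)$ on $O$, one cannot quote the $5r$-covering lemma verbatim but must build the disjoint subfamily respecting scales while keeping the dilation factor $c_{1}^{*}$ strictly below $1$, so that $\bigcup_{k}Q^{*k}$ remains inside $F^{c}$. This is exactly what is done in Lemma 15 of \cite{MPh}, whose argument we would follow.
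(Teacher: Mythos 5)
You are reconstructing Lemma~\ref{wh}, which the paper itself does not prove but cites as Lemma~15 of \cite{MPh}; your plan (Whitney/Vitali selection of cylinders $Q_{c_{1}D(\cdot)}(\cdot)$, engulfing via Lemma~\ref{le1}, comparability of $D$ across intersecting cylinders, and then inflation by $c_{1}^{*},c_{1}^{**}$) is the right skeleton, and your treatment of (a) and (c) is fine. However, the comparability estimate as you state it is too weak to deliver the covering in (b) with $c_{1}^{*}<1$, which is forced on you because $\bigcup_{k}Q^{*k}=O$ requires $Q^{*k}\subseteq F^{c}$, hence $c_{1}^{*}D(t_{k},x_{k})<D(t_{k},x_{k})$.

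Concretely, from \emph{``if $D(t',x')\le c_{1}D(t,x)$ then contradiction''} you only extract $D(t_{k},x_{k})>c_{1}D(t,x)$. In the case $D(t_{k},x_{k})<D(t,x)$ (which is not excluded by mere maximality under Zorn, and which your ``scale priority'' remark does not actually rule out, since $D$ is unbounded on $O$ and there is no largest scale to start from), the engulfing Lemma~\ref{le1} applied to the pair of intersecting cylinders of radii $c_{1}D(t,x)\ge c_{1}D(t_{k},x_{k})$ gives $(t_{k},x_{k})\in Q_{K_{0}c_{1}D(t,x)}(t,x)$, and hence only
\[
|x-x_{k}|<K_{0}c_{1}D(t,x)\le K_{0}c_{1}\,\frac{D(t_{k},x_{k})}{c_{1}}=K_{0}D(t_{k},x_{k}),
\]
which forces $c_{1}^{*}\ge K_{0}\ge 3$. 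That is incompatible with $c_{1}^{*}<1$. What you actually need is the sharper, ``Lipschitz-type'' statement $D(t_{k},x_{k})\ge(1-A c_{1})D(t,x)$ for a fixed $A=A(K_{0},l)$, after which $K_{0}c_{1}D(t,x)/D(t_{k},x_{k})\le K_{0}c_{1}/(1-Ac_{1})$ can be made $<1$ by choosing $c_{1}$ small. Your own contradiction argument gives precisely this if you run it with the hypothesis $D(t',x')=\lambda D(t,x)$ for \emph{arbitrary} $\lambda<1$ (rather than only $\lambda\le c_{1}$): the witness $(s,y)\in F$ for $D(t',x')$ lies in $Q_{(\lambda+A c_{1})D(t,x)}(t,x)$ after the engulfing and \textbf{A2} bookkeeping, and non-intersection with $F$ for radii below $D(t,x)$ then forces $\lambda\ge 1-A c_{1}$. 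So the fix is not conceptual but quantitative; without it, step (b) does not close. Relatedly, you should make explicit that the nesting $Q^{k}\subseteq Q^{*k}\subseteq Q^{**k}$ is not automatic, since $\kappa$ is not assumed monotone; it holds once the ratios $c_{1}^{*}/c_{1}$ and $c_{1}^{**}/c_{1}^{*}$ are taken large enough that $l$ of their reciprocals is $\le 1$, which again pushes in the opposite direction from keeping $c_{1}^{*}<1$ and must be reconciled by shrinking $c_{1}$ first.
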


\begin{remark}
\label{re3}Assume \textbf{A1, A2} hold and $Q^{k}\subseteq Q^{\ast
k}\subseteq Q^{\ast\ast k}$ be the sequences in $\mathbb{Q}$ from Lemma \ref%
{wh}. It is easy to find a sequence of disjoint measurable sets $C^{k}$ so
that $Q^{k}\subseteq C^{k}\subseteq Q^{\ast k}$ and $\cup_{k}C^{k}=O$. For
example (see Remark, p. 15, in \cite{stein1}), 
\begin{equation*}
C^{k}=Q^{\ast
k}\cap\left(\cup_{j<k}C^{j}\right)^{c}\cap\left(\cup_{j>k}Q^{j}\right)^{c}.
\end{equation*}
\end{remark}

We have the following Calderon-Zygmund decomposition for $\mathbb{Q}.$

\begin{theorem}
\label{whit}(Theorem 4 in \cite{MPh}) Assume \textbf{A1, A2} hold. Let $f\in
L^{1}\left( \mathbf{R\times R}^{d}\right) $, $\alpha >0$ and $O_{\alpha
}=\left\{ \widetilde{\mathcal{M}}f>\alpha \right\} .$ Consider the sets $%
Q^{k}\subseteq C^{k}\subseteq Q^{\ast k}\subseteq O$ of Lemma \ref{wh} and
Remark \ref{re3} associated to $O_{\alpha }.$

There is a decomposition $f=g+b$ with 
\begin{equation}
g\left(x\right)=\left\{ 
\begin{array}{cc}
f(x) & \text{if }x\notin O_{\alpha}, \\ 
\frac{1}{\left\vert C^{k}\right\vert }\int_{C^{k}}f & \text{ if }x\in
C^{k},k\geq1,%
\end{array}%
\right.  \label{2}
\end{equation}
and with $b=\sum_{k}b_{k}$, where 
\begin{equation}
b_{k}=\chi_{C^{k}}\left[f\left(x\right)-\frac{1}{\left\vert C^{k}\right\vert 
}\int_{C^{k}}f\text{ }\right],k\geq1,  \label{3}
\end{equation}
(note $C^{k}$ are disjoint, $\cup_{k}C^{k}=O_{\alpha}$). Also,

(i) $\left\vert g\left(x\right)\right\vert \leq c\alpha$ for a.e. $x.$

(ii) support($b_{k})\subseteq Q^{\ast k},$ 
\begin{equation*}
\int b_{k}=0\text{ and }\int\left\vert b_{k}\right\vert \leq
c\alpha\left\vert Q^{\ast k}\right\vert .
\end{equation*}

(iii) $\sum_{k}\left\vert Q^{\ast k}\right\vert \leq\frac{c}{\alpha}%
\int\left\vert f\right\vert .$
\end{theorem}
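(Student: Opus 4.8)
The plan is to carry out the classical Calder\'{o}n--Zygmund argument with the collection $\mathbb{Q}$ and the non-centered maximal operator $\widetilde{\mathcal{M}}$ in place of cubes and the centered maximal operator, using \textbf{A2} whenever comparability of boxes is needed. First I would check that $O_{\alpha}=\{\widetilde{\mathcal{M}}f>\alpha\}$ is open (each member of $\mathbb{Q}$ is an open set, so a box $Q$ with $|Q|^{-1}\int_{Q}|f|>\alpha$ witnesses $\widetilde{\mathcal{M}}f>\alpha$ at every one of its points) and of finite measure, since by Remark \ref{re2} and Theorem \ref{ft}(b), $|O_{\alpha}|\le|\{\mathcal{M}f>K_{0}^{d}l(K_{0})\alpha\}|\le c\alpha^{-1}\int|f|<\infty$; if $O_{\alpha}=\emptyset$ the statement is trivial with $g=f,\ b=0$, so assume $F=O_{\alpha}^{c}$ is closed and nonempty. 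Then Lemma \ref{wh} and Remark \ref{re3} produce the boxes $Q^{k}\subseteq C^{k}\subseteq Q^{\ast k}\subseteq Q^{\ast\ast k}$ (disjoint $Q^{k}$, $\bigcup_{k}C^{k}=\bigcup_{k}Q^{\ast k}=O_{\alpha}$, $Q^{\ast\ast k}\cap F\neq\emptyset$), and with $g,b,b_{k}$ defined by (\ref{2})--(\ref{3}) the decomposition $f=g+b$, the supports $\mathrm{supp}\,b_{k}\subseteq C^{k}\subseteq Q^{\ast k}$, and $\int b_{k}=0$ are immediate.

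For the bound (i), $|g|\le c\alpha$, I would argue separately on $F$ and on each $C^{k}$. On $F$, where $g=f$, I would use the Lebesgue differentiation theorem for the family $\mathbb{Q}$ --- which follows from the weak $(1,1)$ bound already quoted together with the fact that $\mathrm{diam}\,Q_{\delta}(t,x)\to0$ as $\delta\to0$ (that is $\kappa(\delta)\to0$, from \textbf{A1}) --- to conclude $|f|\le\widetilde{\mathcal{M}}f\le\alpha$ a.e.\ on $F$. On $C^{k}$, I would pick $(r_{k},z_{k})\in Q^{\ast\ast k}\cap F$; since $\widetilde{\mathcal{M}}f(r_{k},z_{k})\le\alpha$ and $(r_{k},z_{k})$ lies in $Q^{\ast\ast k}\in\mathbb{Q}$, the average of $|f|$ over $Q^{\ast\ast k}$ is $\le\alpha$, and with $Q^{k}\subseteq C^{k}\subseteq Q^{\ast\ast k}$ and $|Q_{\delta}|=c_{0}\kappa(\delta)\delta^{d}$,
\[
\frac{1}{|C^{k}|}\int_{C^{k}}|f|\;\le\;\frac{|Q^{\ast\ast k}|}{|Q^{k}|}\,\alpha\;=\;\frac{\kappa(c_{1}^{\ast\ast}D_{k})}{\kappa(c_{1}D_{k})}\Bigl(\tfrac{c_{1}^{\ast\ast}}{c_{1}}\Bigr)^{d}\alpha\;\le\;l\Bigl(\tfrac{c_{1}^{\ast\ast}}{c_{1}}\Bigr)\Bigl(\tfrac{c_{1}^{\ast\ast}}{c_{1}}\Bigr)^{d}\alpha\;=:\;c\alpha ,
\]
with $D_{k}=D(t_{k},x_{k})$, the last step by \textbf{A2} since the boxes share a center. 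As $\bigcup_{k}C^{k}=O_{\alpha}$ and $c\ge1$ (because $Q^{k}\subseteq Q^{\ast\ast k}$), this gives (i).

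Part (ii) is then immediate: $\int b_{k}=0$ by construction, and $\int|b_{k}|\le2\int_{C^{k}}|f|\le2c\alpha|Q^{\ast k}|$ by the estimate just obtained and $|C^{k}|\le|Q^{\ast k}|$, which is the claimed bound after absorbing the $2$ into the constant. For (iii) I would use the same volume comparison, now between $Q^{k}$ and $Q^{\ast k}$, to get $|Q^{\ast k}|\le l(c_{1}^{\ast}/c_{1})(c_{1}^{\ast}/c_{1})^{d}|Q^{k}|$; since the $Q^{k}$ are disjoint and lie in $O_{\alpha}$,
\[
\sum_{k}|Q^{\ast k}|\;\le\;l\Bigl(\tfrac{c_{1}^{\ast}}{c_{1}}\Bigr)\Bigl(\tfrac{c_{1}^{\ast}}{c_{1}}\Bigr)^{d}\,|O_{\alpha}|\;=\;l\Bigl(\tfrac{c_{1}^{\ast}}{c_{1}}\Bigr)\Bigl(\tfrac{c_{1}^{\ast}}{c_{1}}\Bigr)^{d}\,|\{\widetilde{\mathcal{M}}f>\alpha\}|\;\le\;\frac{c}{\alpha}\int|f| ,
\]
again by the weak $(1,1)$ bound for $\widetilde{\mathcal{M}}f$ (Remark \ref{re2}, Theorem \ref{ft}(b)).

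The two points that are not purely mechanical are the applicability of the Whitney covering of Lemma \ref{wh} (requiring $O_{\alpha}$ open and of finite measure, which I get from the openness of the members of $\mathbb{Q}$ and from the weak $(1,1)$ bound), and the volume comparisons for members of $\mathbb{Q}$. The natural worry about the latter is that $\mathbb{Q}$ is not increasing in $\delta$; but here it is innocuous, because each triple $Q^{k}\subseteq Q^{\ast k}\subseteq Q^{\ast\ast k}$ consists of concentric boxes whose radii differ only by the fixed factors $c_{1}\le c_{1}^{\ast}\le c_{1}^{\ast\ast}$, so the scaling estimate $\kappa(\varepsilon r)\le l(\varepsilon)\kappa(r)$ of \textbf{A2} applies term by term. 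Everything else is the standard Calder\'{o}n--Zygmund bookkeeping, as in \cite{stein1}.
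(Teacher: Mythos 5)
Your proof is correct and is the standard Calder\'{o}n--Zygmund argument adapted to the family $\mathbb{Q}$; the paper itself does not reprove Theorem~\ref{whit} but cites Theorem 4 of \cite{MPh}, and your argument is exactly the kind of proof one would find there (and in \cite{stein1}, which the paper also follows). You have correctly identified the two places where the non-Euclidean structure of $\mathbb{Q}$ must be handled: (a) the Lebesgue differentiation theorem for $\mathbb{Q}$, which you rightly derive from the weak $(1,1)$ bound of Theorem~\ref{ft}(b) together with $\kappa(\delta)\to0$ from \textbf{A1}, and which is what gives $|f|\le\widetilde{\mathcal{M}}f\le\alpha$ a.e.\ on $F$; and (b) the volume ratios $|Q^{\ast\ast k}|/|Q^{k}|$ and $|Q^{\ast k}|/|Q^{k}|$ for concentric boxes, which you bound uniformly via $|Q_{\delta}|=c_{0}\kappa(\delta)\delta^{d}$ and the scaling condition $\kappa(\varepsilon r)\le l(\varepsilon)\kappa(r)$ of \textbf{A2}. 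The use of the point $(r_{k},z_{k})\in Q^{\ast\ast k}\cap F$ (from Lemma~\ref{wh}(c)) to bound the average of $|f|$ over $Q^{\ast\ast k}$ by $\alpha$ is the key step and is carried out correctly, and the passage from there to (i), (ii), (iii) is standard bookkeeping that you do cleanly. One small phrasing wobble: in your reduction you write ``if $O_{\alpha}=\emptyset$ the statement is trivial \dots so assume $F=O_{\alpha}^{c}$ is closed and nonempty''; the degenerate case you actually need to rule out is $F=\emptyset$, i.e., $O_{\alpha}=\mathbf{R}^{d+1}$, which your finite-measure bound $|O_{\alpha}|\le c\alpha^{-1}\int|f|<\infty$ excludes — worth stating explicitly, but the logic is already present in your write-up.
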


The $L^{p}$ norm of $f$ can be controlled by its sharp function as well.

\begin{definition}
Given a locally integrable $f$ on $\mathbf{R\times R}^{d}$, we define its
sharp function as 
\begin{equation*}
f^{\natural}\left(t,x\right)=\sup_{\delta}\frac{1}{\left\vert
Q_{\delta}\left(t,x\right)\right\vert }\int_{Q_{\delta}\left(t,x\right)}%
\left\vert f\left(s,y\right)-f_{Q_{\delta}\left(t,x\right)}\right\vert dsdy.
\end{equation*}
or

\begin{equation*}
f^{\sharp }\left( t,x\right) =\sup_{\left( t,x\right) \in Q}\frac{1}{%
\left\vert Q\right\vert }\int_{Q}\left\vert f\left( s,y\right)
-f_{Q}\right\vert dsdy,
\end{equation*}%
where 
\begin{equation*}
f_{Q}=\frac{1}{Q}\int_{Q}fdm,
\end{equation*}%
and $\sup $ is taken over all $Q\in \mathbb{Q}$ containing $(t,x)$.
\end{definition}

Obviously, 
\begin{equation*}
f^{\sharp }\left( t,x\right) \leq 2\widetilde{\mathcal{M}}f\left( t,x\right)
,f^{\natural }\leq 2\mathcal{M}f.
\end{equation*}

\begin{remark}
\label{re4}Let $f\in L_{loc}^{1}$, $B\subseteq\mathbf{R}^{d}$ be a bounded
measurable subset. Then for any constant $C,$ 
\begin{eqnarray*}
& & \frac{1}{\left\vert B\right\vert ^{2}}\int_{B}\int_{B}\left\vert
f(s,y)-f\left(t,z\right)\right\vert dtdzdsdy \\
& \leq & \frac{1}{\left\vert B\right\vert ^{2}}\int_{B}%
\int_{B}|f(s,y)-C|dtdzdsdy+\frac{1}{\left\vert B\right\vert ^{2}}%
\int_{B}\int_{B}\left\vert C-f\left(t,z\right)\right\vert dtdzdsdy \\
& \leq & \frac{2}{\left\vert B\right\vert }\int_{B}\left\vert
f\left(t,x\right)-C\right\vert dtdx.
\end{eqnarray*}
Hence 
\begin{eqnarray}
\frac{1}{\left\vert B\right\vert }\int_{B}\left\vert f-f_{B}\right\vert &
\leq & \frac{1}{\left\vert B\right\vert ^{2}}\int_{B}\int_{B}\left\vert
f(s,y)-f\left(t,z\right)\right\vert dtdzdsdy  \label{4-1} \\
& \leq & 2\frac{1}{\left\vert B\right\vert }\int_{B}\left\vert
f-f_{B}\right\vert  \notag
\end{eqnarray}
\end{remark}

As a consequence of (\ref{4-1}), the following holds.

\begin{remark}
\label{re5}1. As in the case of maximal functions, $f^{\natural }\left(
t,x\right) \leq 2f^{\sharp }\left( t,x\right) \leq \frac{4}{\left(
K_{0}^{d}l\left( K_{0}\right) \right) ^{2}}f^{\natural }\left( t,x\right) $
or 
\begin{equation*}
\frac{1}{2}f^{\natural }\leq f^{\sharp }\leq \frac{2}{\left(
K_{0}^{d}l\left( K_{0}\right) \right) ^{2}}f^{\natural }.
\end{equation*}

2. Since $\left\vert \left\vert a\right\vert -\left\vert b\right\vert
\right\vert \leq\left\vert a-b\right\vert $, it follows by (\ref{4-1}) that $%
\left(\left\vert f\right\vert
\right)^{\natural}\leq2f^{\natural},\left(\left\vert f\right\vert
\right)^{\#}\leq2f^{\#}.$
\end{remark}

\begin{lemma}
\label{le2}(cf. Lemma 9, p.101, in \cite{kry1}) Let $\lambda=\frac{1}{2c}$ $%
(c$ is from Theorem \ref{whit})$,f\in L^{1}$ and $\alpha>0$. Then 
\begin{equation}
\left\vert \left\{ \left\vert f\right\vert >\alpha\right\} \right\vert \leq%
\frac{4}{\alpha}\int\chi_{\left\{ \widetilde{\mathcal{M}}f>\lambda\alpha%
\right\} }f^{\sharp}.  \label{1}
\end{equation}
If $f\geq0$, we $4/\alpha$ can be replaced by $2/\alpha$.
\end{lemma}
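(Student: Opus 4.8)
The plan is to run a Calderon--Zygmund-style argument exactly as in Krylov's Lemma 9 on p.101 of \cite{kry1}, but using the system $\mathbb{Q}$ and the noncentered maximal function $\widetilde{\mathcal{M}}$ in place of the usual cubes. Fix $\alpha>0$ and apply the decomposition of Theorem \ref{whit} to $|f|$ (which is in $L^1$ along with $f$) at level $\alpha$: set $O_\alpha=\{\widetilde{\mathcal{M}}|f|>\alpha\}$ and obtain the sets $Q^k\subseteq C^k\subseteq Q^{*k}\subseteq O_\alpha$ of Lemma \ref{wh} and Remark \ref{re3}, with $\cup_k C^k=O_\alpha$, the $C^k$ disjoint, and property (iii) $\sum_k|Q^{*k}|\le \frac{c}{\alpha}\int|f|$. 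On the complement $O_\alpha^c$ one has $|f|\le\widetilde{\mathcal{M}}|f|\le\alpha$ a.e. (Lebesgue differentiation adapted to $\mathbb{Q}$, or Remark \ref{re2}), so $\{|f|>\alpha\}$ is, up to a null set, contained in $O_\alpha$.

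Next I would estimate $|\{|f|>\alpha\}|$ by working block by block on each $C^k$. On $Q^{*k}$ write $f=(f-f_{Q^{*k}})+f_{Q^{*k}}$. Because $Q^{**k}\cap F\neq\emptyset$ for $F=O_\alpha^c$ (property (c) of Lemma \ref{wh}), the engulfing/volume comparison of Lemma \ref{le1} together with the definition of $\widetilde{\mathcal{M}}$ forces the average $|f_{Q^{*k}}|$ to be comparable to $\alpha$; more precisely there is a point of $Q^{**k}$ where $\widetilde{\mathcal{M}}|f|\le\alpha$, and since $Q^{*k}$ is one of the admissible cubes through that point (after the bounded radius expansion), $\frac{1}{|Q^{*k}|}\int_{Q^{*k}}|f|\le C\alpha$ with $C$ depending only on $l$ and $K_0$. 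Hence on each $Q^{*k}$ the ``good part'' is bounded by $C\alpha$, so for the set where $|f|>\alpha$ to have positive measure inside $C^k$ the oscillation $|f-f_{Q^{*k}}|$ must itself be of size $\gtrsim\alpha$ on a nonnegligible portion; a Chebyshev inequality on $Q^{*k}$ then gives
\begin{equation*}
|\{|f|>\alpha\}\cap C^k|\le \frac{C}{\alpha}\int_{Q^{*k}}|f-f_{Q^{*k}}|\,dm.
\end{equation*}
Summing over $k$, using disjointness of the $C^k$ and the bound $\frac{1}{|Q^{*k}|}\int_{Q^{*k}}|f-f_{Q^{*k}}|\le \inf_{Q^{*k}}f^\sharp$ (valid since $Q^{*k}\in\mathbb{Q}$ contains each of its points), and then property (iii), one arrives at
\begin{equation*}
|\{|f|>\alpha\}|\le \sum_k \frac{C}{\alpha}\,|Q^{*k}|\,\big(\inf_{Q^{*k}}f^\sharp\big)\le \frac{C}{\alpha}\int_{O_\alpha} f^\sharp.
\end{equation*}
Finally I would intersect with the larger level set $\{\widetilde{\mathcal{M}}f>\lambda\alpha\}$ with $\lambda=\frac{1}{2c}$: on each $Q^{*k}\subseteq O_\alpha$ one has $\widetilde{\mathcal{M}}|f|>\alpha>\lambda\alpha$ at every point (by Remark \ref{re2} and the choice of $\lambda$), so $O_\alpha\subseteq\{\widetilde{\mathcal{M}}f>\lambda\alpha\}$ up to the usual identification, and one may insert $\chi_{\{\widetilde{\mathcal{M}}f>\lambda\alpha\}}$ under the integral for free, yielding (\ref{1}). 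Tracking the constants carefully through the two applications of Remark \ref{re2}/Lemma \ref{le1} shows the prefactor can be taken to be $4$, and when $f\ge0$ one loses a factor of $2$ less (the oscillation estimate needs no absolute-value doubling), giving $2/\alpha$.

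The main obstacle I anticipate is bookkeeping the constants so that the claimed value $4/\alpha$ (resp. $2/\alpha$) actually comes out, since the system $\mathbb{Q}$ is only ``almost increasing'' and every comparison of a cube to a dilate costs a factor $K_0^d l(K_0)$ as in Lemma \ref{le1} and Remark \ref{re2}; one must choose the expansion factors $c_1<c_1^*<c_1^{**}$ and $\lambda$ so these cancel appropriately. The structural part of the argument — decomposition, Chebyshev on each block, summation via (iii) — is routine once Theorem \ref{whit} and Remark \ref{re2} are in hand.
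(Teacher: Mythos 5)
Your overall strategy — Calderon--Zygmund decomposition via Theorem \ref{whit}, then block-by-block Chebyshev — is the right one and essentially matches the paper's route, but there is a genuine gap in the choice of decomposition level. You apply Theorem \ref{whit} at level $\alpha$, and then argue that since the averages satisfy $|f_{Q^{*k}}|\le C\alpha$ (with $C$ depending on the expansion factors and $l$), the event $|f|>\alpha$ must force the oscillation $|f-f_{Q^{*k}}|$ to be of order $\alpha$. But in this setting the Calderon--Zygmund constant $c$ in Theorem \ref{whit}(i) — and hence your $C$ — is at least $1$ (and usually strictly larger, because the volume ratio $|Q^{**k}|/|C^k|$ exceeds $1$). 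So $|f(t,x)|>\alpha$ and $|f_{Q^{*k}}|\le C\alpha$ give no lower bound on $|f(t,x)-f_{Q^{*k}}|$: for instance $f(t,x)=f_{Q^{*k}}=C\alpha$ satisfies $|f|>\alpha$ with zero oscillation. The inequality $|\{|f|>\alpha\}\cap C^k|\le \frac{C}{\alpha}\int_{Q^{*k}}|f-f_{Q^{*k}}|$ therefore does not follow.

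The paper avoids this by applying Theorem \ref{whit} at the smaller level $\lambda\alpha$ with $\lambda=1/(2c)$ rather than at level $\alpha$. Then the good part of the decomposition $f=g+b$ satisfies $|g|\le c\lambda\alpha=\alpha/2$ a.e., which gives the pointwise inclusion $\{|f|>\alpha\}\subseteq\{|b|>\alpha/2\}$ unconditionally, and Chebyshev applied to $b$ (supported on $O=\{\widetilde{\mathcal{M}}f>\lambda\alpha\}=\cup_k C^k$, with the $C^k$ disjoint) produces the bound by $\int\chi_O f^\sharp$ directly. Note this also makes your final step superfluous: by decomposing at level $\lambda\alpha$ the set $O$ is already $\{\widetilde{\mathcal{M}}f>\lambda\alpha\}$, so there is no need to separately intersect with this larger level set at the end. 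Your draft can be repaired simply by replacing the level of the decomposition with $\lambda\alpha$ and running Chebyshev on $b$ rather than on the pointwise event $|f|>\alpha$; the remainder of your reasoning (comparing $\int_{C^k}|f-f_{C^k}|$ to $|Q^{*k}|\inf_{C^k}f^\sharp$ via the engulfing property, summing over $k$ using disjointness) is sound.
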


\begin{proof}
Assume $f\geq0$. Apply Theorem \ref{whit} with $\alpha$ replaced by $%
\lambda\alpha=\alpha/2c$, i.e. $\lambda=1/(2c)$. We have $f=g+b$ with $%
\left\vert g\right\vert \leq c\lambda\alpha=\alpha/2$ a.e. and 
\begin{equation*}
b=\sum_{k}b_{k}=\sum_{k}\chi_{C^{k}}\left[f\left(x\right)-\frac{1}{%
\left\vert C^{k}\right\vert }\int_{C^{k}}f\text{ }\right].
\end{equation*}
Recall $O=\left\{ \widetilde{\mathcal{M}}f>\lambda\alpha\right\}
=\cup_{k}C^{k}$ and $C^{k}$ are disjoint. Since $\left\{ \left\vert
f\right\vert >\alpha\right\} \subseteq\left\{ \left\vert b\right\vert
>\alpha/2\right\} ,$(\ref{1}) follows.
\end{proof}

\begin{theorem}
\label{fs}(Fefferman/Stein) Let $f\in L^{p}\left(\mathbf{R\times R}%
^{d}\right),p\in\left(1,\infty\right)$. Then 
\begin{equation*}
\left\vert f\right\vert _{L^{p}}\leq N\left\vert f^{\sharp}\right\vert
_{L^{p}}.
\end{equation*}
\end{theorem}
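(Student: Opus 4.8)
The plan is to run the standard good-$\lambda$/layer-cake argument: essentially all of the non-routine work is already packaged into Lemma \ref{le2} (the Calder\'{o}n--Zygmund decomposition adapted to the family $\mathbb{Q}$) and into the maximal-function bound of Theorem \ref{ft}(c), so the proof amounts to combining these with H\"{o}lder's inequality and an absorption step that uses the standing hypothesis $f\in L^{p}$.

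First I would dispose of trivialities and put ourselves in a setting where Lemma \ref{le2} applies: if $f^{\sharp }\notin L^{p}$ there is nothing to prove, so assume $f^{\sharp }\in L^{p}$; and although Lemma \ref{le2} is stated for $f\in L^{1}$, its proof (through Theorem \ref{whit}) uses integrability of $f$ only to ensure that the level sets $\left\{ \widetilde{\mathcal{M}}f>\alpha \right\} $ have finite measure, which for $f\in L^{p}$, $1<p<\infty $, follows instead from $\widetilde{\mathcal{M}}f\in L^{p}$ (Remark \ref{re2} and Theorem \ref{ft}(c)); hence Lemma \ref{le2} is available for our $f$.

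Then, with $\lambda =1/(2c)$ as in Lemma \ref{le2}, that lemma gives for every $\alpha >0$
\begin{equation*}
\left\vert \left\{ \left\vert f\right\vert >\alpha \right\} \right\vert \leq \frac{4}{\alpha }\int \chi _{\left\{ \widetilde{\mathcal{M}}f>\lambda \alpha \right\} }f^{\sharp }\,dm.
\end{equation*}
I would insert this into the layer-cake identity $\left\vert f\right\vert _{L^{p}}^{p}=p\int_{0}^{\infty }\alpha ^{p-1}\left\vert \left\{ \left\vert f\right\vert >\alpha \right\} \right\vert \,d\alpha $ and apply Tonelli's theorem (the integrand is nonnegative), so that the $\alpha $-integration becomes $\int_{0}^{\widetilde{\mathcal{M}}f(z)/\lambda }\alpha ^{p-2}\,d\alpha $, which is finite precisely because $p>1$; this gives
\begin{equation*}
\left\vert f\right\vert _{L^{p}}^{p}\leq \frac{4p}{(p-1)\lambda ^{p-1}}\int f^{\sharp }(z)\bigl( \widetilde{\mathcal{M}}f(z)\bigr) ^{p-1}dm(z)\leq \frac{4p}{(p-1)\lambda ^{p-1}}\left\vert f^{\sharp }\right\vert _{L^{p}}\left\vert \widetilde{\mathcal{M}}f\right\vert _{L^{p}}^{p-1},
\end{equation*}
the last inequality by H\"{o}lder with exponents $p$ and $p/(p-1)$. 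Finally $\left\vert \widetilde{\mathcal{M}}f\right\vert _{L^{p}}\leq \bigl( K_{0}^{d}l(K_{0})\bigr) ^{-1}N_{p}\left\vert f\right\vert _{L^{p}}$ by Remark \ref{re2} and Theorem \ref{ft}(c), so $\left\vert f\right\vert _{L^{p}}^{p}\leq C\left\vert f^{\sharp }\right\vert _{L^{p}}\left\vert f\right\vert _{L^{p}}^{p-1}$ with $C=C(p,l,K_{0})$.

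There is no serious analytic obstacle here; the one point that genuinely needs the hypothesis $f\in L^{p}$ is the concluding absorption: because $\left\vert f\right\vert _{L^{p}}<\infty $ is known \emph{a priori}, one may divide the last inequality by $\left\vert f\right\vert _{L^{p}}^{p-1}$ (trivial if it vanishes) to obtain $\left\vert f\right\vert _{L^{p}}\leq N\left\vert f^{\sharp }\right\vert _{L^{p}}$ with $N=C$; without such a standing integrability assumption the inequality fails (e.g.\ for $f$ constant). The remaining verifications---applicability of Tonelli, integrability of $\alpha ^{p-2}$ near $\alpha =0$, and bookkeeping of the constant in terms of $p$, the scaling factor $l$ and the engulfing constant $K_{0}$---are routine.
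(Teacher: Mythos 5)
Your proof is correct, and the core computation (layer cake, Lemma \ref{le2}, Tonelli, H\"older, Theorem \ref{ft}(c), then absorption by dividing by $\left\vert f\right\vert _{L^{p}}^{p-1}$) coincides with the paper's. The one place you diverge from the paper is in how the $L^{1}$ hypothesis in Lemma \ref{le2} is lifted. The paper keeps Lemma \ref{le2} as stated: it first proves the inequality for $f\in L^{1}\cap L^{p}$, and then passes to general $f\in L^{p}$ by density, choosing $f_{n}\in L^{1}\cap L^{p}$ with $f_{n}\rightarrow f$ in $L^{p}$ and using the sublinearity $f_{n}^{\sharp }\leq (f-f_{n})^{\sharp }+f^{\sharp }$ together with $\left\vert (f-f_{n})^{\sharp }\right\vert _{L^{p}}\leq 2\left\vert \mathcal{M}(f-f_{n})\right\vert _{L^{p}}\leq C\left\vert f-f_{n}\right\vert _{L^{p}}\rightarrow 0$. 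You instead argue that the Calder\'{o}n--Zygmund decomposition (Theorem \ref{whit}) and hence Lemma \ref{le2} already apply to $f\in L^{p}$, since the Whitney covering of $O_{\alpha }=\{\widetilde{\mathcal{M}}f>\alpha \}$ only requires $O_{\alpha }$ to be a proper open set, which for $p>1$ follows from $\widetilde{\mathcal{M}}f\in L^{p}$, and since (\ref{1}) is derived from parts (i)--(ii) of Theorem \ref{whit} together with the sharp-function bound on $\int_{C^{k}}|f-f_{C^{k}}|$, never from the global $L^{1}$ bound in (iii). That is a sound and arguably cleaner route, avoiding the approximation limbo; the only caveat is that it rests on an inspection of the proof of Theorem \ref{whit}, which in this paper is cited from \cite{MPh} rather than reproduced, so the paper's density argument is the more self-contained of the two. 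Either way the conclusion and the constant's dependence on $p$, $l$, $K_{0}$ are as claimed.
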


\begin{proof}
Indeed, using Lemma \ref{le2}, (\ref{1}), Theorem \ref{ft} and H\"{o}lder
inequality, for $f\in L^{1}$ we have 
\begin{eqnarray*}
\left\vert f\right\vert _{L^{p}}^{p} &=&\int_{0}^{\infty }\left\vert \left\{
|f|>\alpha ^{1/p}\right\} \right\vert d\alpha \leq 4\int_{0}^{\infty }\alpha
^{-1/p}\int \chi _{\left\{ \widetilde{\mathcal{M}}f>\lambda \alpha
^{1/p}\right\} }f^{\sharp } \\
&=&c\int \int_{0}^{(\widetilde{\mathcal{M}}f)^{p}\lambda ^{-p}}\alpha
^{-1/p}d\alpha f^{\sharp }=c\int (\widetilde{\mathcal{M}}f)^{p-1}f^{\sharp
}\leq N\left\vert \widetilde{\mathcal{M}}f\right\vert
_{L^{p}}^{p-1}\left\vert f^{\sharp }\right\vert _{L^{p}}.
\end{eqnarray*}%
If in addition $f\in L^{p}$, then we are done. If only $f\in L^{p}$, then we
take a sequence $f_{n}\in L^{1}\cap L^{p}$ converging to $f$ in $L^{p}$ and
notice that 
\begin{equation*}
f_{n}^{\sharp }\leq (f-f_{n})^{\sharp }+f^{\sharp }
\end{equation*}
and 
\begin{equation*}
\left\vert (f-f_{n})^{\sharp }\right\vert _{L^{p}}\leq 2\left\vert \mathcal{M%
}(f-f_{n})\right\vert _{L^{p}}\leq C\left\vert f-f_{n}\right\vert _{L^{p}}.
\end{equation*}
\end{proof}

\subsubsection{H\"{o}rmander Condition and $L_{p}$-estimate}

Let $V$ be a separable Hilbert space, let $f$ be a measurable $V-$valued
function on $\mathbf{R}^{d+1}$, define an operator $\mathcal{G}$ by 
\begin{equation}
\left( \mathcal{G}f\right) \left( t,x\right) =\left[ \int \left\vert \int_{%
\mathbf{R}^{d}}K\left( t,x,s,y\right) f\left( s,y\right) dy\right\vert
_{V}^{2}ds\right] ^{1/2},\left( t,x\right) \in \mathbf{R}^{d+1}
\label{eq:operatorG}
\end{equation}%
Let $K$ be a measurable function and for almost all $\left( t,x\right) \in 
\mathbf{R}^{d+1}$ the function $K\left( t,x,\cdot \right) f$ is integrable
for all $f\in C_{0}^{\infty }\left( \mathbf{R}^{d+1},V\right) $.

We assume that $\mathcal{G}$ is bounded on $L_{2}$, i.e., 
\begin{equation}
\left\vert \mathcal{G}f\right\vert _{L_{2}}\leq M_{0}\left\vert f\right\vert
_{L_{2}\left( \mathbf{R}^{d+1};V\right) },f\in L_{2}\left( \mathbf{R}%
^{d+1};V\right)  \label{eq:L2cond}
\end{equation}%
In \cite{kry} and \cite{mp2}, an $L_{p}$-estimate for $\mathcal{G}f$ was
derived by estimating directly its sharp function $\left( \mathcal{G}%
f\right) ^{\#}$. It was shown in \cite{KK2}, that $\left( \mathcal{G}%
f\right) ^{\#}$-estimate follows by verifying a H\"{o}rmander condition. We
adjust \cite{KK2} to our setting, and show that the sharp function estimate
follows form the following H\"{o}rmander condition: there are constants $%
C_{0}>1,M_{1}>0$ so that for any $Q_{\delta }\left( t,x\right) \in \mathbb{Q}
$,

\begin{equation}
\int \left[ \int \chi _{Q_{C_{0}\delta }^{c}\left( t,x\right) }\left\vert
K\left( t,x,r,y\right) -K\left( \bar{t},\bar{x},r,y\right) \right\vert dy%
\right] ^{2}dr\leq M_{1},\hspace{1em}\forall \left( \bar{t},\bar{x}\right)
\in Q_{\delta }\left( t,x\right) .  \label{eq:stochHormander}
\end{equation}

\begin{lemma}
\label{stochHormander} Let $\mathbf{A}_{1}$, $\mathbf{A}_{2}$, (\ref%
{eq:L2cond}), (\ref{eq:stochHormander}) hold. Then $\mathcal{G}$ is bounded
in $L_{p}-$norm on $L_{2}\cap L_{p}$ if $p>2$. More precisely 
\begin{equation*}
\left\vert \mathcal{G}f\right\vert _{L_{p}}\leq A_{p}\left\vert f\right\vert
_{L_{p}\left( \mathbf{R}^{d+1};V\right) },\quad \forall f\in L_{2}\left( 
\mathbf{R}^{d+1};V\right) \cap L_{p}\left( \mathbf{R}^{d+1};V\right) ,\quad
p>2,
\end{equation*}

where $A_{p}$ depends only on the constants $M_{1},M_{0}$ and $p$.
\end{lemma}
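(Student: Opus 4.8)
The plan is to follow the Calder\'{o}n--Zygmund scheme adapted to the family $\mathbb{Q}$ of parabolic balls $Q_{\delta}$: bound the sharp function $(\mathcal{G}f)^{\sharp}$ pointwise by a maximal function of $|f|_{V}^{2}$, and then invoke the Fefferman--Stein inequality (Theorem \ref{fs}) together with the maximal inequality (Theorem \ref{ft}(c)). Since the asserted estimate is an a priori bound, I would first prove it for $f$ in a dense subclass of $L_{2}\cap L_{p}$ (e.g.\ bounded with compact support, for which $\mathcal{G}f\in L_{p}$ is immediate) and then extend it to all $f\in L_{2}\cap L_{p}$ by density, passing to the limit with the help of (\ref{eq:L2cond}); this is where the hypothesis ``$f\in L_{2}\cap L_{p}$'' is used. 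Throughout I use that $\mathcal{G}$ is sublinear, $\mathcal{G}(f+g)\leq\mathcal{G}f+\mathcal{G}g$, by Minkowski's inequality in $L_{2}(ds;V)$.

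\emph{Sharp function estimate.} Fix $(t,x)\in\mathbf{R}^{d+1}$ and an arbitrary $Q=Q_{\delta}(t_{0},x_{0})\in\mathbb{Q}$ containing $(t,x)$. Split $f=f^{\prime}+f^{\prime\prime}$ with $f^{\prime}=f\chi_{Q_{C_{0}\delta}(t_{0},x_{0})}$, where $C_{0}$ is the constant in (\ref{eq:stochHormander}); enlarging $C_{0}$ only weakens (\ref{eq:stochHormander}), so I may assume in addition that $Q_{\delta}(t_{0},x_{0})\subseteq Q_{C_{0}\delta}(t_{0},x_{0})$ and, by $\mathbf{A}_{2}$ and the engulfing Lemma \ref{le1}, that $|Q_{C_{0}\delta}(t_{0},x_{0})|\leq C(C_{0},l)\,|Q_{\delta}(t_{0},x_{0})|$. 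For the \emph{local part}, by sublinearity, (\ref{eq:L2cond}) and Jensen's inequality,
\[
\frac{1}{|Q|}\int_{Q}\mathcal{G}f^{\prime}\leq\Big(\frac{1}{|Q|}\int_{\mathbf{R}^{d+1}}(\mathcal{G}f^{\prime})^{2}\Big)^{1/2}\leq M_{0}\Big(\frac{1}{|Q|}\int_{Q_{C_{0}\delta}(t_{0},x_{0})}|f|_{V}^{2}\Big)^{1/2}\leq C\big(\widetilde{\mathcal{M}}(|f|_{V}^{2})(t,x)\big)^{1/2},
\]
using the volume comparison and that $Q_{C_{0}\delta}(t_{0},x_{0})\in\mathbb{Q}$ contains $(t,x)$. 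For the \emph{far part}, choose $c=(\mathcal{G}f^{\prime\prime})(t_{0},x_{0})$; for $(\bar{t},\bar{x})\in Q$, the reverse triangle inequality in $L_{2}(ds;V)$ gives
\[
\big|(\mathcal{G}f^{\prime\prime})(\bar{t},\bar{x})-(\mathcal{G}f^{\prime\prime})(t_{0},x_{0})\big|\leq\Big[\int\Big|\int\big(K(\bar{t},\bar{x},s,y)-K(t_{0},x_{0},s,y)\big)f^{\prime\prime}(s,y)\,dy\Big|_{V}^{2}ds\Big]^{1/2}.
\]
Applying the Cauchy--Schwarz inequality in $y$, with weight $|K(\bar{t},\bar{x},s,\cdot)-K(t_{0},x_{0},s,\cdot)|$ on the slice $\{y:(s,y)\notin Q_{C_{0}\delta}(t_{0},x_{0})\}$, and then the Cauchy--Schwarz inequality in $s$, one factors the right-hand side into the ``$L_{1}$-in-$y$, $L_{2}$-in-$s$'' norm of the kernel difference over $Q_{C_{0}\delta}^{c}(t_{0},x_{0})$, which is $\leq M_{1}^{1/2}$ by (\ref{eq:stochHormander}), times an average of $|f|_{V}^{2}$ over a ball of $\mathbb{Q}$ containing $(t,x)$, which is $\leq\big(\widetilde{\mathcal{M}}(|f|_{V}^{2})(t,x)\big)^{1/2}$. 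Averaging over $(\bar{t},\bar{x})\in Q$, adding the local estimate, and using Remark \ref{re4} together with (\ref{4-1}) (which allows $(\mathcal{G}f)_{Q}$ to be replaced by any constant at the cost of a factor $2$), we obtain, for every such $Q$,
\[
\frac{1}{|Q|}\int_{Q}\big|\mathcal{G}f-(\mathcal{G}f)_{Q}\big|\leq C(M_{0}+M_{1}^{1/2})\,\big(\widetilde{\mathcal{M}}(|f|_{V}^{2})(t,x)\big)^{1/2},
\]
hence $(\mathcal{G}f)^{\sharp}(t,x)\leq C(M_{0}+M_{1}^{1/2})\big(\widetilde{\mathcal{M}}(|f|_{V}^{2})(t,x)\big)^{1/2}$.

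\emph{Conclusion.} By Remark \ref{re2} and Theorem \ref{ft}(c), $\widetilde{\mathcal{M}}$ is bounded on $L_{p/2}(\mathbf{R}^{d+1})$ precisely because $p/2>1$ --- this is where the hypothesis $p>2$ enters --- so together with Theorem \ref{fs},
\[
\left\vert \mathcal{G}f\right\vert _{L_{p}}\leq N\left\vert (\mathcal{G}f)^{\sharp}\right\vert _{L_{p}}\leq C(M_{0}+M_{1}^{1/2})\left\vert \widetilde{\mathcal{M}}(|f|_{V}^{2})\right\vert _{L_{p/2}}^{1/2}\leq A_{p}\left\vert |f|_{V}^{2}\right\vert _{L_{p/2}}^{1/2}=A_{p}\left\vert f\right\vert _{L_{p}(\mathbf{R}^{d+1};V)},
\]
with $A_{p}$ depending only on $M_{0},M_{1},p$ (and, through $l$ and $K_{0}$, on the fixed $\kappa$). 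I expect the far-part estimate to be the main obstacle: one must arrange the Cauchy--Schwarz and Minkowski steps in the $y$- and $s$-variables so that precisely the \emph{stated} form of the H\"{o}rmander condition (\ref{eq:stochHormander}) --- an $L_{2}$-in-time, $L_{1}$-in-space bound on the kernel difference over $Q_{C_{0}\delta}^{c}$ --- suffices, and so that the $f$-dependent factor comes out as the maximal function of $|f|_{V}^{2}$ (not an $L_{\infty}$-norm or a higher power of $f$, which would either fail or spoil the final exponent count). A secondary nuisance is that $\kappa$, hence $Q_{\delta}$, is not literally monotone in $\delta$, which is absorbed using $\mathbf{A}_{2}$ and the engulfing Lemma \ref{le1}.
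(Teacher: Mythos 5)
Your overall architecture — a pointwise sharp-function bound followed by Fefferman--Stein (Theorem \ref{fs}) and the maximal inequality (Theorem \ref{ft}(c)) on $L_{p/2}$ — is close in spirit to the paper's, and your local-part estimate and the final Fefferman--Stein step are both fine. The genuine gap is exactly where you flag it: the far-part bound. The Hörmander condition (\ref{eq:stochHormander}) only gives
\begin{equation*}
\int\Big[\int\chi_{Q_{C_{0}\delta}^{c}}\left\vert K(\bar{t},\bar{x},r,y)-K(t_{0},x_{0},r,y)\right\vert dy\Big]^{2}dr\leq M_{1},
\end{equation*}
i.e.\ a single global $L_{2}$-in-time, $L_{1}$-in-space bound on the kernel difference, with no annular decay and no relation between the kernel mass and $\left\vert Q\right\vert ^{-1}$. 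After you Cauchy--Schwarz in $y$ with weight $\left\vert \Delta K\right\vert $, you are left with
\begin{equation*}
\int\Big(\int\left\vert \Delta K\right\vert dy\Big)\Big(\int\left\vert \Delta K\right\vert \left\vert f^{\prime\prime}\right\vert _{V}^{2}dy\Big)\,ds,
\end{equation*}
and there is simply no way to dominate the second inner integral by an average $\frac{1}{\left\vert Q^{\prime}\right\vert }\int_{Q^{\prime}}\left\vert f\right\vert _{V}^{2}$ over a single $Q^{\prime}\in\mathbb{Q}$ containing $(t,x)$: the support of $f^{\prime\prime}$ is the \emph{complement} of a ball, and $\left\vert \Delta K\right\vert $ is allowed by the hypothesis to concentrate arbitrarily far from the center. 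So the claimed estimate $(\mathcal{G}f)^{\sharp}\lesssim\bigl(\widetilde{\mathcal{M}}(\left\vert f\right\vert _{V}^{2})\bigr)^{1/2}$ does not follow from (\ref{eq:stochHormander}). What \emph{does} follow (this is Lemma \ref{lem5} of the paper, via the engulfing Lemma \ref{le1}) is the $L_{\infty}$ bound $Gf^{\prime\prime}\leq M_{1}^{1/2}\left\vert f\right\vert _{L_{\infty}}$, and that is the only far-field control available.

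The paper's proof accepts this and compensates with a scale-dependent size decomposition: in Lemma \ref{lem6} one gets $[\mathcal{G}(f_{1}+f_{2})]^{\natural}\leq2\widetilde{\mathcal{M}}(\mathcal{G}f_{1})+C\left\vert f_{2}\right\vert _{L_{\infty}}$ for $f_{2}\in L_{\infty}$ (note: the maximal function is of $\mathcal{G}f_{1}$, not of $\left\vert f\right\vert _{V}^{2}$), and then in the main proof one sets $f_{1,\lambda}=f\,1_{\left\vert f\right\vert _{V}>\delta\lambda}$, $f_{2,\lambda}=f\,1_{\left\vert f\right\vert _{V}\leq\delta\lambda}$ so that $\left\vert f_{2,\lambda}\right\vert _{L_{\infty}}\leq\delta\lambda$ can be absorbed after choosing $\delta$ small. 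The inclusion $\{\lambda\leq(\mathcal{G}f)^{\natural}\}\subseteq\{\lambda\leq4\widetilde{\mathcal{M}}(\mathcal{G}f_{1,\lambda})\}$ then feeds into a layer-cake computation, where (\ref{eq:L2cond}) and the $L_{2}$-boundedness of $\widetilde{\mathcal{M}}$ produce $\left\vert f\right\vert _{L_{p}}^{p}$. Your proposal tries to shortcut directly to a maximal-function sharp bound; that shortcut is not available under the stated hypotheses, and the $\lambda$-dependent good/bad split plus layer-cake is the missing ingredient.
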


For convenience we denote,

\begin{equation*}
\mathcal{K}f\left( t,x,s\right) =\int_{\mathbf{R}^{d}}K\left( t,x,s,y\right)
f\left( s,y\right) dy,
\end{equation*}

and 
\begin{equation*}
Gf\left(t,x,s,y\right)=\left[\int\left|\mathcal{K}f\left(t,x,r\right)-%
\mathcal{K}\left(s,y,r\right)\right|_{V}^{2}dr\right]^{1/2}.
\end{equation*}

For the proof of Lemma \ref{stochHormander}, we will need some auxiliary
results.

\begin{lemma}
\label{lem3} Let $\left( t_{1},x_{1}\right) \in Q_{\delta }\left(
t_{0},x_{0}\right) $. Then for any $f_{1},f_{2}\in L_{2}\left( \mathbf{R}%
^{d+1};V\right) $, 
\begin{eqnarray*}
&&\fint_{Q_{\delta }\left( t_{0},x_{0}\right) }\fint_{Q_{\delta }\left(
t_{0},x_{0}\right) }\left\vert \mathcal{G}\left( f_{1}+f_{2}\right) \left(
t,x\right) -\mathcal{G}\left( f_{1}+f_{2}\right) \left( s,y\right)
\right\vert dtdxdsdy \\
&\leq &2\widetilde{\mathcal{M}}\left( \mathcal{G}f_{1}\right) \left(
t_{1},x_{1}\right) +\fint_{Q_{\delta }\left( t_{0},x_{0}\right) }\fint%
_{Q_{\delta }\left( t_{0},x_{0}\right) }Gf_{2}\left( t,s,x,y\right) dtdxdsdy
\end{eqnarray*}
\end{lemma}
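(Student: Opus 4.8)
The plan is to use only the sublinearity of $\mathcal{G}$ --- it is the $L_{2}\left( \mathbf{R};V\right) $-norm of the linear quantity $\mathcal{K}f$ --- together with the reverse triangle inequality, and then to recognize an average over $Q_{\delta }\left( t_{0},x_{0}\right) $ as being dominated by $\widetilde{\mathcal{M}}$ evaluated at $\left( t_{1},x_{1}\right) $. First I would record three pointwise (a.e.) estimates. Writing $\mathcal{K}g\left( t,x,\cdot \right) $ for the $L_{2}\left( \mathbf{R};V\right) $-valued function inside the definition of $\mathcal{G}$, so that $\mathcal{G}g\left( t,x\right) =\left\vert \mathcal{K}g\left( t,x,\cdot \right) \right\vert _{L_{2}\left( \mathbf{R};V\right) }$ and $Gg\left( t,x,s,y\right) =\left\vert \mathcal{K}g\left( t,x,\cdot \right) -\mathcal{K}g\left( s,y,\cdot \right) \right\vert _{L_{2}\left( \mathbf{R};V\right) }$, and using that $\mathcal{K}$ is linear so $\mathcal{K}\left( f_{1}+f_{2}\right) =\mathcal{K}f_{1}+\mathcal{K}f_{2}$, the triangle inequality in $L_{2}\left( \mathbf{R};V\right) $ gives, for a.e. $\left( t,x\right) $,
\begin{equation*}
\left\vert \mathcal{G}\left( f_{1}+f_{2}\right) \left( t,x\right) -\mathcal{G}f_{2}\left( t,x\right) \right\vert \leq \mathcal{G}f_{1}\left( t,x\right) ,
\end{equation*}
and the same with $\left( t,x\right) $ replaced by $\left( s,y\right) $; the reverse triangle inequality gives $\left\vert \mathcal{G}f_{2}\left( t,x\right) -\mathcal{G}f_{2}\left( s,y\right) \right\vert \leq Gf_{2}\left( t,x,s,y\right) $. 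Combining these with the triangle inequality for real numbers yields, for a.e. $\left( t,x\right) ,\left( s,y\right) $,
\begin{equation*}
\left\vert \mathcal{G}\left( f_{1}+f_{2}\right) \left( t,x\right) -\mathcal{G}\left( f_{1}+f_{2}\right) \left( s,y\right) \right\vert \leq \mathcal{G}f_{1}\left( t,x\right) +\mathcal{G}f_{1}\left( s,y\right) +Gf_{2}\left( t,x,s,y\right) .
\end{equation*}

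Then I would integrate this bound in $\left( t,x\right) \in Q_{\delta }\left( t_{0},x_{0}\right) $ and $\left( s,y\right) \in Q_{\delta }\left( t_{0},x_{0}\right) $ and divide by $\left\vert Q_{\delta }\left( t_{0},x_{0}\right) \right\vert ^{2}$. The first two terms each contribute $\fint_{Q_{\delta }\left( t_{0},x_{0}\right) }\mathcal{G}f_{1}$; since $Q_{\delta }\left( t_{0},x_{0}\right) \in \mathbb{Q}$ contains $\left( t_{1},x_{1}\right) $, the definition of $\widetilde{\mathcal{M}}$ shows $\fint_{Q_{\delta }\left( t_{0},x_{0}\right) }\mathcal{G}f_{1}\leq \widetilde{\mathcal{M}}\left( \mathcal{G}f_{1}\right) \left( t_{1},x_{1}\right) $, so their sum is at most $2\widetilde{\mathcal{M}}\left( \mathcal{G}f_{1}\right) \left( t_{1},x_{1}\right) $; the third term is precisely the remaining double average of $Gf_{2}$. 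This is exactly the asserted inequality.

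I do not expect a genuine obstacle here: the statement is essentially a bookkeeping consequence of sublinearity, and the one step worth a line of care is the well-definedness of $\mathcal{K}f_{i}\left( t,x,\cdot \right) $ as an element of $L_{2}\left( \mathbf{R};V\right) $ for a.e. $\left( t,x\right) $ --- which follows from the standing hypothesis $\left( \ref{eq:L2cond}\right) $, since $\mathcal{G}f_{i}\in L_{2}\left( \mathbf{R}^{d+1}\right) $ is then finite a.e. --- after which all the pointwise inequalities above, which hold only almost everywhere, become harmless once integrated over $Q_{\delta }\left( t_{0},x_{0}\right) $. Finally, by symmetry of the $L_{2}$-norm one has $Gf_{2}\left( t,x,s,y\right) =Gf_{2}\left( s,y,t,x\right) $, so the precise ordering of the arguments in the last integrand is immaterial.
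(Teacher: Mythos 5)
Your proof is correct and is essentially the same argument as in the paper: both rely on the reverse triangle inequality and the triangle inequality in $L_{2}\left( \mathbf{R};V\right) $ to establish the pointwise bound $\left\vert \mathcal{G}f\left( t,x\right) -\mathcal{G}f\left( s,y\right) \right\vert \leq \mathcal{G}f_{1}\left( t,x\right) +\mathcal{G}f_{1}\left( s,y\right) +Gf_{2}\left( t,x,s,y\right) $, followed by averaging over $Q_{\delta }\left( t_{0},x_{0}\right) $ and invoking the definition of $\widetilde{\mathcal{M}}$. The only cosmetic difference is that you reach the pointwise inequality by inserting $\mathcal{G}f_{2}$ and telescoping, whereas the paper first bounds the left side by $Gf\left( t,x,s,y\right) $ and then splits $\mathcal{K}f=\mathcal{K}f_{1}+\mathcal{K}f_{2}$ inside the $L_{2}$-norm; both routes give the identical estimate.
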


\begin{proof}
Set $f=f_{1}+f_{2}$, and let $\left( t,x\right) ,\left( s,y\right) \in
Q_{\delta }\left( t_{0},x_{0}\right) $. Then 
\begin{align*}
& \left\vert \mathcal{G}f\left( t,x\right) -\mathcal{G}f\left( s,y\right)
\right\vert \\
& =\left\vert \left[ \int \left\vert \mathcal{K}f\left( t,x,r\right)
\right\vert _{V}^{2}dr\right] ^{1/2}-\left[ \int \left\vert \mathcal{K}%
f\left( s,y,r\right) \right\vert _{V}^{2}dr\right] ^{1/2}\right\vert \\
& \leq \left[ \int \left\vert \mathcal{K}f\left( t,x,r\right) -\mathcal{K}%
f\left( s,y,r\right) \right\vert _{V}^{2}dr\right] ^{1/2} \\
& \leq \left[ \int \left\vert \mathcal{K}f_{1}\left( t,x,r\right)
\right\vert _{V}^{2}dr\right] ^{1/2}+\left[ \int \left\vert \mathcal{K}%
f_{1}\left( s,y,r\right) \right\vert _{V}^{2}dr\right] ^{1/2} \\
& +\left[ \int \left\vert \mathcal{K}f_{2}\left( t,x,r\right) -\mathcal{K}%
f_{2}\left( s,y,r\right) \right\vert _{V}^{2}dr\right] ^{1/2}
\end{align*}

Taking average on $Q_{\delta }\left( t_{0},x_{0}\right) ,$ the result
follows.
\end{proof}

\begin{lemma}
\label{lem4} Suppose (\ref{eq:L2cond}) holds, $f$ belongs to $L_{2}\left( 
\mathbf{R}^{d+1};V\right) \cap L_{\infty }\left( \mathbf{R}^{d+1};V\right) $
and vanishes outside of $Q_{\gamma \delta }\left( t_{0},x_{0}\right) ,\gamma
>0$. Then 
\begin{equation*}
A:=\fint_{Q_{\delta }\left( t_{0},x_{0}\right) }\fint_{Q_{\delta }\left(
t_{0},x_{0}\right) }Gf\left( t,x,s,y\right) dtdxdsdy\leq C\left\vert
f\right\vert _{L_{\infty }\left( \mathbf{R}^{d+1},V\right) }
\end{equation*}

where $C=C\left( d,\gamma ,M_{0}\right) $.
\end{lemma}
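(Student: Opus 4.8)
The plan is to dominate $Gf$ pointwise by $\mathcal{G}f$ evaluated at the two points, reduce the double average over $Q_{\delta}\left( t_{0},x_{0}\right) $ to a single average of $\mathcal{G}f$, and then control that average by the $L_{2}$-bound (\ref{eq:L2cond}) together with the smallness of the support of $f$. First, by the triangle (Minkowski) inequality in $L_{2}\left( dr;V\right) $, for all $\left( t,x\right) ,\left( s,y\right) \in \mathbf{R}^{d+1}$,
\[
Gf\left( t,x,s,y\right) =\left[ \int \left\vert \mathcal{K}f\left(
t,x,r\right) -\mathcal{K}f\left( s,y,r\right) \right\vert _{V}^{2}dr\right]
^{1/2}\leq \mathcal{G}f\left( t,x\right) +\mathcal{G}f\left( s,y\right) ,
\]
so averaging both arguments over $Q_{\delta }\left( t_{0},x_{0}\right) $ gives $A\leq 2\fint_{Q_{\delta }\left( t_{0},x_{0}\right) }\mathcal{G}f\left( t,x\right) dtdx$.

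Next I would apply the Cauchy--Schwarz inequality to this single average and then pass to the whole space:
\[
\fint_{Q_{\delta }\left( t_{0},x_{0}\right) }\mathcal{G}f\leq \left( \frac{1
}{\left\vert Q_{\delta }\left( t_{0},x_{0}\right) \right\vert }\int_{\mathbf{
R}^{d+1}}\left\vert \mathcal{G}f\right\vert ^{2}\right) ^{1/2}\leq \frac{
M_{0}}{\left\vert Q_{\delta }\left( t_{0},x_{0}\right) \right\vert ^{1/2}}
\left\vert f\right\vert _{L_{2}\left( \mathbf{R}^{d+1};V\right) },
\]
where the last step uses (\ref{eq:L2cond}). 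Since $f$ vanishes outside $Q_{\gamma \delta }\left( t_{0},x_{0}\right) $, one has $\left\vert f\right\vert _{L_{2}\left( \mathbf{R}^{d+1};V\right) }^{2}\leq \left\vert Q_{\gamma \delta }\left( t_{0},x_{0}\right) \right\vert \left\vert f\right\vert _{L_{\infty }\left( \mathbf{R}^{d+1},V\right) }^{2}$, so it remains to bound the volume ratio $\left\vert Q_{\gamma \delta }\left( t_{0},x_{0}\right) \right\vert /\left\vert Q_{\delta }\left( t_{0},x_{0}\right) \right\vert $ uniformly in $\delta $ and $\left( t_{0},x_{0}\right) $.

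This last point is the only place where the structure of $\mathbb{Q}$ enters. Using $\left\vert Q_{\delta }\left( t_{0},x_{0}\right) \right\vert =c_{0}\kappa \left( \delta \right) \delta ^{d}$, $\left\vert Q_{\gamma \delta }\left( t_{0},x_{0}\right) \right\vert =c_{0}\kappa \left( \gamma \delta \right) \left( \gamma \delta \right) ^{d}$, and assumption \textbf{A2} in the form $\kappa \left( \gamma \delta \right) \leq l\left( \gamma \right) \kappa \left( \delta \right) $, I get
\[
\frac{\left\vert Q_{\gamma \delta }\left( t_{0},x_{0}\right) \right\vert }{
\left\vert Q_{\delta }\left( t_{0},x_{0}\right) \right\vert }\leq \gamma
^{d}l\left( \gamma \right) ,
\]
and combining the three displayed estimates yields $A\leq 2M_{0}\left( \gamma ^{d}l\left( \gamma \right) \right) ^{1/2}\left\vert f\right\vert _{L_{\infty }\left( \mathbf{R}^{d+1},V\right) }$, i.e. the claim with $C=C\left( d,\gamma ,M_{0}\right) $ (the fixed scaling factor $l$ being absorbed into the dependence on $\gamma $). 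There is no genuine obstacle here; the argument is elementary once one notes that the scale-invariant volume comparison is exactly what \textbf{A2} provides, and everything else is Minkowski and Cauchy--Schwarz.
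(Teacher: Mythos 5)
Your proposal is correct and follows essentially the same route as the paper's proof: the pointwise bound $Gf\left( t,x,s,y\right) \leq \mathcal{G}f\left( t,x\right) +\mathcal{G}f\left( s,y\right) $, Cauchy--Schwarz on the average, the $L_{2}$-bound (\ref{eq:L2cond}), the support estimate on $\left\vert f\right\vert _{L_{2}}$, and assumption \textbf{A2} for the volume ratio $\left\vert Q_{\gamma \delta }\right\vert /\left\vert Q_{\delta }\right\vert \leq \gamma ^{d}l\left( \gamma \right) $. The paper's version just folds the last two steps into a single chain of inequalities, so there is no substantive difference.
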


\begin{proof}
Obviously, $Gf\left( t,s,x,y\right) \leq \mathcal{G}f\left( t,x\right) +%
\mathcal{G}f\left( s,y\right) $ for any $\left( t,x\right) ,\left(
s,y\right) \in Q_{\delta }\left( t_{0},x_{0}\right) .$ Hence, $A\leq 2\fint%
_{Q_{\delta }\left( t_{0},x_{0}\right) }\mathcal{G}f\left( t,x\right) dtdx$.

Applying H\"{o}lder's inequality and using (\ref{eq:L2cond}), 
\begin{eqnarray*}
\fint_{Q_{\delta }\left( t_{0},x_{0}\right) }\mathcal{G}f\left( t,x\right)
dtdx &\leq &\frac{1}{\left\vert Q_{\delta }\left( t_{0},x_{0}\right)
\right\vert ^{1/2}}\left[ \int \left\vert \mathcal{G}f\left( t,x\right)
\right\vert ^{2}dtdx\right] ^{1/2} \\
&\leq &\frac{M_{0}}{\left\vert Q_{\delta }\left( t_{0},x_{0}\right)
\right\vert ^{1/2}}\left[ \int \left\vert f\left( t,x\right) \right\vert
_{V}^{2}dtdx\right] ^{1/2} \\
&\leq &M_{0}\frac{\left\vert Q_{\gamma \delta }\left( t_{0},x_{0}\right)
\right\vert ^{1/2}}{\left\vert Q_{\delta }\left( t_{0},x_{0}\right)
\right\vert ^{1/2}}\left\vert f\right\vert _{L_{\infty }\left( \mathbf{R}%
^{d+1},V\right) }\leq M_{0}\left[ \gamma ^{d}l\left( \gamma \right) \right]
^{1/2}\left\vert f\right\vert _{L_{\infty }\left( \mathbf{R}^{d+1};V\right)
}.
\end{eqnarray*}
\end{proof}

\begin{lemma}
\label{lem5} Let $b\geq 2,l\left( b^{-1}\right) \leq 1/2$, $\gamma \geq
bC_{0}+1$ and $l\left( \gamma ^{-1}\right) ^{-1}\geq l\left( bC_{0}\right)
+1 $. Suppose that $f\in L_{\infty }\left( \mathbf{R}^{d+1};V\right) $
vanishes on $Q_{\gamma \delta }\left( t_{0},x_{0}\right) $ and (\ref%
{eq:stochHormander}) hold. Then, 
\begin{equation*}
\fint_{Q_{\delta }\left( t_{0},x_{0}\right) }\fint_{Q_{\delta }\left(
t_{0},x_{0}\right) }Gf\left( t,x,s,y\right) dtdxdsdy\leq C\left\vert
f\right\vert _{L_{\infty }\left( \mathbf{R}^{d+1};V\right) }
\end{equation*}

where $C=C\left(M_{1}\right)$.
\end{lemma}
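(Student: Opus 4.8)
The plan is to bound $Gf(t,x,s,y)$ pointwise for $(t,x),(s,y)\in Q_{\delta}(t_{0},x_{0})$ by a single application of the H\"{o}rmander condition \eqref{eq:stochHormander}, performed on an enlarged cube centered at $(t,x)$; the averaging step is then immediate. The only real work is the cube bookkeeping, and this is precisely what the four numerical hypotheses on $b,\gamma,l$ are calibrated for. Note that, in contrast with Lemma~\ref{lem4}, the $L_2$-bound \eqref{eq:L2cond} is not used here.

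First I would record two cube inclusions. Fix $(t,x),(s,y)\in Q_{\delta}(t_{0},x_{0})$. Using $\kappa(\delta)=\kappa(b^{-1}\cdot b\delta)\leq l(b^{-1})\kappa(b\delta)\leq\frac{1}{2}\kappa(b\delta)$ together with $b\geq 2$, one gets $|s-t|\leq|s-t_{0}|+|t_{0}-t|<2\kappa(\delta)\leq\kappa(b\delta)$ and $|y-x|<2\delta\leq b\delta$, i.e. $(s,y)\in Q_{b\delta}(t,x)$. Next, for $(r,z)\in Q_{bC_{0}\delta}(t,x)$ one has $|z-x_{0}|<(bC_{0}+1)\delta\leq\gamma\delta$ by $\gamma\geq bC_{0}+1$, and, using $\kappa(bC_{0}\delta)\leq l(bC_{0})\kappa(\delta)$ and $\kappa(\delta)\leq l(\gamma^{-1})\kappa(\gamma\delta)$,
$$|r-t_{0}|<\kappa(bC_{0}\delta)+\kappa(\delta)\leq\bigl(l(bC_{0})+1\bigr)\kappa(\delta)\leq\bigl(l(bC_{0})+1\bigr)l(\gamma^{-1})\kappa(\gamma\delta)\leq\kappa(\gamma\delta),$$
the final inequality being the hypothesis $l(\gamma^{-1})^{-1}\geq l(bC_{0})+1$. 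Hence $Q_{bC_{0}\delta}(t,x)=Q_{C_{0}(b\delta)}(t,x)\subseteq Q_{\gamma\delta}(t_{0},x_{0})$.

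Since $f$ vanishes on $Q_{\gamma\delta}(t_{0},x_{0})\supseteq Q_{bC_{0}\delta}(t,x)$, we have $|f(r,z)|_{V}\leq|f|_{L_{\infty}(\mathbf{R}^{d+1};V)}\,\chi_{Q_{bC_{0}\delta}^{c}(t,x)}(r,z)$ for all $(r,z)$. Applying \eqref{eq:stochHormander} with $Q_{b\delta}(t,x)\in\mathbb{Q}$ in place of $Q_{\delta}(t,x)$ and the point $(s,y)\in Q_{b\delta}(t,x)$ in place of $(\bar t,\bar x)$, the integral
$$\mathcal{K}f(t,x,r)-\mathcal{K}f(s,y,r)=\int_{\mathbf{R}^{d}}\chi_{Q_{bC_{0}\delta}^{c}(t,x)}(r,z)\bigl[K(t,x,r,z)-K(s,y,r,z)\bigr]f(r,z)\,dz$$
is absolutely convergent for a.e. $r$, and $|\mathcal{K}f(t,x,r)-\mathcal{K}f(s,y,r)|_{V}\leq|f|_{L_{\infty}}\int\chi_{Q_{bC_{0}\delta}^{c}(t,x)}(r,z)\,|K(t,x,r,z)-K(s,y,r,z)|\,dz$. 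Squaring, integrating in $r$ and using \eqref{eq:stochHormander} once more gives
$$Gf(t,x,s,y)\leq|f|_{L_{\infty}}\left[\int\left(\int\chi_{Q_{bC_{0}\delta}^{c}(t,x)}(r,z)\,|K(t,x,r,z)-K(s,y,r,z)|\,dz\right)^{2}dr\right]^{1/2}\leq M_{1}^{1/2}\,|f|_{L_{\infty}}.$$
Averaging this bound over $(t,x),(s,y)\in Q_{\delta}(t_{0},x_{0})$ yields $\fint_{Q_{\delta}(t_{0},x_{0})}\fint_{Q_{\delta}(t_{0},x_{0})}Gf(t,x,s,y)\,dtdxdsdy\leq M_{1}^{1/2}|f|_{L_{\infty}}$, which is the claim with $C=M_{1}^{1/2}=C(M_{1})$.

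I expect the main obstacle to be the first step: the intermediate radius $b\delta$ must be large enough for \eqref{eq:stochHormander} to be applicable between the two points $(t,x),(s,y)\in Q_{\delta}(t_{0},x_{0})$ (forcing $b\geq 2$ and $l(b^{-1})\leq 1/2$ so that $(s,y)\in Q_{b\delta}(t,x)$), yet small enough that the cube $Q_{C_{0}b\delta}(t,x)$ it excludes still lies inside $Q_{\gamma\delta}(t_{0},x_{0})$, where $f\equiv 0$ (forcing $\gamma\geq bC_{0}+1$ and $l(\gamma^{-1})^{-1}\geq l(bC_{0})+1$). Once these two inclusions are established, the analytic content is only Cauchy--Schwarz/Minkowski together with the definition of $Gf$.
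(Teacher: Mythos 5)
Your proof is correct and takes essentially the same route as the paper's: you verify that $(s,y)\in Q_{b\delta}(t,x)$ and that $Q_{C_0 b\delta}(t,x)\subseteq Q_{\gamma\delta}(t_0,x_0)$, then apply the H\"{o}rmander condition \eqref{eq:stochHormander} at the enlarged radius $b\delta$ (the paper phrases the second inclusion via the contrapositive $Q_{\gamma\delta}^c(t_0,x_0)\subseteq Q_{C_0 b\delta}^c(s,y)$ and centers at $(s,y)$ rather than $(t,x)$, but by the symmetry of $Gf$ these are identical). The numerical hypotheses on $b,\gamma,l$ enter in exactly the same way.
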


\begin{proof}
If $\left( t,x\right) ,\left( s,y\right) \in Q_{\delta }\left(
t_{0},x_{0}\right) $, then $\left\vert x-y\right\vert <2\delta \leq b\delta $%
, and%
\begin{equation*}
\left\vert t-s\right\vert <2\kappa \left( \delta \right) \leq 2l\left( \frac{%
1}{b}\right) \kappa \left( b\delta \right) \leq \kappa \left( b\delta
\right) ,
\end{equation*}%
i.e. $\left( t,x\right) \in Q_{b\delta }\left( s,y\right) $. If $\left(
r,z\right) \in Q_{\gamma \delta }\left( t_{0},x_{0}\right) ^{c}$ then either 
$\left\vert z-x_{0}\right\vert \geq \gamma \delta $ or $\left\vert
r-t_{0}\right\vert \geq \kappa \left( \gamma \delta \right) $ .

If $\left\vert z-x_{0}\right\vert \geq \gamma \delta $ then $\left\vert
z-y\right\vert \geq \left\vert z-x_{0}\right\vert -\left\vert
y-x_{0}\right\vert \geq \gamma \delta -\delta \geq bC_{0}\delta $. On the
other hand, if $\left\vert r-t_{0}\right\vert \geq \kappa \left( \gamma
\delta \right) $ then%
\begin{equation*}
\left\vert r-s\right\vert \geq \left\vert r-t_{0}\right\vert -\left\vert
t_{0}-s\right\vert \geq \kappa \left( \gamma \delta \right) -\kappa \left(
\delta \right) \geq \kappa \left( \delta \right) \left[ l\left( \gamma
^{-1}\right) ^{-1}-1\right] \geq \kappa \left( \delta \right) l\left(
bC_{0}\right) \geq \kappa \left( bC_{0}\delta \right) .
\end{equation*}

Both cases imply that $\left( r,z\right) \in Q_{C_{0}b\delta }\left(
s,y\right) ^{c}$, and by the stochastic H\"{o}rmander condition (\ref%
{eq:stochHormander}) 
\begin{align*}
\left\vert Gf\left( t,x,s,y\right) \right\vert ^{2}& \leq \int \left[ \int_{%
\mathcal{\mathbf{R}}^{d}}\left\vert K\left( t,x,r,z\right) -K\left(
s,y,r,z\right) \right\vert \left\vert f\left( r,z\right) \right\vert _{V}dz%
\right] ^{2}dr \\
& \leq \left\vert f\right\vert _{L_{\infty }\left( \mathbf{R}^{d+1};V\right)
}^{2}\int \left[ \int \chi _{Q_{C_{0}b\delta }\left( s,y\right)
^{c}}\left\vert K\left( t,x,r,z\right) -K\left( s,y,r,z\right) \right\vert dz%
\right] ^{2}dr \\
& \leq M_{1}\left\vert f\right\vert _{L_{\infty }\left( \mathbf{R}%
^{d+1};V\right) }^{2}.
\end{align*}

Therefore, 
\begin{equation*}
\fint_{Q_{\delta}\left(t_{0},x_{0}\right)}\fint_{Q_{\delta}\left(t_{0},x_{0}%
\right)}Gf\left(t,s,x,y\right)dtdxdsdy\leq
M_{1}^{1/2}\left|f\right|_{L_{\infty}\left(E;V\right)}
\end{equation*}
\end{proof}

\begin{lemma}
\label{lem6}Let $f_{1}\in L_{2}\left( \mathbf{R}^{d+1};V\right) $, $f_{2}\in
L_{2}\left( \mathbf{R}^{d+1};V\right) \cap L_{\infty }\left( \mathbf{R}%
^{d+1};V\right) $ and suppose that (\ref{eq:L2cond}) and (\ref%
{eq:stochHormander}) hold. Then for any $X_{0}=\left( t_{0},x_{0}\right) \in 
\mathbf{R}^{d+1}$, 
\begin{equation*}
\left[ \mathcal{G}\left( f_{1}+f_{2}\right) \right] ^{\natural }\left(
t_{0},x_{0}\right) \leq 2\widetilde{\mathcal{M}}\left( \mathcal{G}%
f_{1}\right) \left( t_{0},x_{0}\right) +C\left\vert f_{2}\right\vert
_{L_{\infty }\left( \mathbf{R}^{d+1};V\right) }
\end{equation*}

where $C=C\left( d,M_{0},M_{1},C_{0}\right) $.
\end{lemma}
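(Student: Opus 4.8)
The plan is to decompose the domain into a "local" part and its complement, and estimate the oscillation of $\mathcal{G}(f_1+f_2)$ over a cube $Q_\delta(t_0,x_0)$ by comparing the local averages. First I would fix $X_0=(t_0,x_0)$ and a scale $\delta>0$, and recall that by Remark \ref{re5} it suffices to bound the centered sharp-function quantity
\begin{equation*}
\fint_{Q_\delta(t_0,x_0)}\left\vert \mathcal{G}(f_1+f_2)(t,x)-\left[\mathcal{G}(f_1+f_2)\right]_{Q_\delta(t_0,x_0)}\right\vert\,dtdx
\end{equation*}
by the double average appearing in Lemma \ref{lem3} (this is the content of (\ref{4-1}) in Remark \ref{re4}). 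So the first step reduces the problem, via Lemma \ref{lem3} applied at $(t_1,x_1)=(t_0,x_0)$, to controlling
\begin{equation*}
\fint_{Q_\delta(t_0,x_0)}\fint_{Q_\delta(t_0,x_0)}Gf_2(t,x,s,y)\,dtdxdsdy,
\end{equation*}
since the $f_1$ contribution is already bounded by $2\widetilde{\mathcal M}(\mathcal G f_1)(t_0,x_0)$.

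The second step is the geometric splitting of $f_2$. Choose $b\ge 2$ and $\gamma$ as in Lemma \ref{lem5} (so that $l(b^{-1})\le 1/2$, $\gamma\ge bC_0+1$, and $l(\gamma^{-1})^{-1}\ge l(bC_0)+1$; such $b,\gamma$ exist because $l(\varepsilon)\to 0$ as $\varepsilon\to 0$ and $l$ is continuous). Write $f_2=f_2'+f_2''$ where $f_2'=f_2\,\chi_{Q_{\gamma\delta}(t_0,x_0)}$ is the part supported in the dilated cube and $f_2''=f_2-f_2'$ vanishes on $Q_{\gamma\delta}(t_0,x_0)$. Both pieces still lie in $L_2(\mathbf R^{d+1};V)\cap L_\infty(\mathbf R^{d+1};V)$ with $L_\infty$-norm bounded by $|f_2|_{L_\infty}$. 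By linearity of $\mathcal{K}$ in its argument, $Gf_2\le Gf_2'+Gf_2''$ pointwise, so the double average splits accordingly.

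The third step applies the two auxiliary lemmas already proved: Lemma \ref{lem4} (with the dilation factor $\gamma$) handles the double average of $Gf_2'$, giving a bound $C(d,\gamma,M_0)|f_2|_{L_\infty}$; Lemma \ref{lem5} handles the double average of $Gf_2''$ using the stochastic Hörmander condition (\ref{eq:stochHormander}), giving a bound $C(M_1)|f_2|_{L_\infty}$. Adding these and absorbing $\gamma$ into the constant (it depends only on $l$, hence on the fixed structure), we obtain the double average of $Gf_2$ bounded by $C(d,M_0,M_1,C_0)|f_2|_{L_\infty}$. Combining with the first step yields
\begin{equation*}
\left[\mathcal G(f_1+f_2)\right]^{\natural}(t_0,x_0)\le 2\widetilde{\mathcal M}(\mathcal G f_1)(t_0,x_0)+C|f_2|_{L_\infty(\mathbf R^{d+1};V)},
\end{equation*}
as claimed. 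I do not anticipate a genuine obstacle here, since all the hard analytic work (the $L_2$ bound, the Hörmander estimate, the engulfing geometry) is packaged into Lemmas \ref{lem3}--\ref{lem5} and Theorem \ref{ft}; the only point requiring care is checking that $b$ and $\gamma$ satisfying the hypotheses of Lemma \ref{lem5} can be chosen depending only on the scaling factor $l$ (and hence that the resulting constant has the claimed dependence), together with bookkeeping the passage from the centered $\natural$-average to the double average via Remark \ref{re4}.
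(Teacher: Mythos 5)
Your proof is correct and follows essentially the same route as the paper: reduce via Lemma \ref{lem3} (with $(t_1,x_1)=(t_0,x_0)$) and Remark \ref{re4} to bounding the double average of $Gf_2$, split $f_2$ across $Q_{\gamma\delta}(t_0,x_0)$, and invoke Lemma \ref{lem4} for the local piece and Lemma \ref{lem5} for the far piece. Your added remark that the admissible $b,\gamma$ exist and depend only on $l$ and $C_0$ is a small but welcome clarification of a point the paper leaves implicit.
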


\begin{proof}
By Lemma \ref{lem3}, for any $\left( t_{0},x\,_{0}\right) \in \mathbf{R}%
^{d+1}$,%
\begin{align*}
& \fint_{Q_{\delta }\left( t_{0},x_{0}\right) }\fint_{Q_{\delta }\left(
t_{0},x_{0}\right) }\left\vert \mathcal{G}\left( f_{1}+f_{2}\right) \left(
t,x\right) -\mathcal{G}\left( f_{1}+f_{2}\right) \left( s,y\right)
\right\vert dtdxdsdy \\
\leq & 2\widetilde{\mathcal{M}}\left( \mathcal{G}f_{1}\right) \left(
t_{0},x_{0}\right) +\fint_{Q_{\delta }\left( t_{0},x_{0}\right) }\fint%
_{Q_{\delta }\left( t_{0},x_{0}\right) }Gf_{2}\left( t,s,x,y\right) dtdxdsdy.
\end{align*}

Moreover, defining $f_{2,1}\left( t,x\right) :=f_{2}\left( t,x\right) \chi
_{Q_{\gamma \delta }\left( t_{0},x_{0}\right) }\left( t,x\right) $ and $%
f_{2,2}\left( t,x\right) :=f_{2}\left( t,x\right) -f_{2,1}\left( t,x\right) $%
, we have 
\begin{align*}
& \fint_{Q_{\delta }\left( t_{0},x_{0}\right) }\fint_{Q_{\delta }\left(
t_{0},x_{0}\right) }Gf_{2}dtdxdsdy \\
\leq & \fint_{Q_{\delta }\left( t_{0},x_{0}\right) }\fint_{Q_{\delta }\left(
t_{0},x_{0}\right) }Gf_{2,1}dtdxdsdy+\fint_{Q_{\delta }\left(
t_{0},x_{0}\right) }\fint_{Q_{\delta }\left( t_{0},x_{0}\right)
}Gf_{2,2}dtdxdsdy.
\end{align*}

We obtain the results by (\ref{4-1}), and Lemmas \ref{lem4} and \ref{lem5}
and taking $\gamma $ satisfying the assumptions of Lemma \ref{lem5}.
\end{proof}

\paragraph{Proof of Lemma \protect\ref{stochHormander}}

Let $p>2$, $f\in L_{2}\left( \mathbf{R}^{d+1};V\right) \cap L_{p}\left( 
\mathbf{R}^{d+1};V\right) $. For $\lambda >0,\delta >0,$ we define $%
f=f_{1,\lambda }+f_{2,\lambda }$ with%
\begin{equation*}
f_{1,\lambda }=f1_{\left\vert f\right\vert _{V}>\delta \lambda
},f_{2,\lambda }=f1_{\left\vert f\right\vert _{V}\leq \delta \lambda }.
\end{equation*}

By Lemma \ref{lem6},

\begin{align*}
\left[ \mathcal{G}\left( f\right) \right] ^{\natural }\left( t,x\right) &
\leq 2\widetilde{\mathcal{M}}\left( \mathcal{G}f_{1,\lambda }\right) \left(
t,x\right) +C\left\vert f_{2,\lambda }\right\vert _{L_{\infty }\left(
E,V\right) } \\
& \leq 2\widetilde{\mathcal{M}}\left( \mathcal{G}f_{1,\lambda }\right)
\left( t,x\right) +C\delta \lambda ,\left( t,x\right) \in \mathbf{R}^{d+1},
\end{align*}

where $C$ is a constant in Lemma \ref{lem6} (independent of $\delta $ and $%
\lambda ,f)$. Fix $\delta >0$ so that $C\delta <\frac{1}{2}$. Then the above
inequality implies that

\begin{equation*}
\mathcal{G}\left( f\right) ^{\natural }\left( t,x\right) \leq 2\widetilde{%
\mathcal{M}}\left( \mathcal{G}f_{1,\lambda }\right) \left( t,x\right)
+\lambda /2,\left( t,x\right) \in \mathbf{R}^{d+1}.
\end{equation*}

Since $\left\{ \lambda \leq \mathcal{G}\left( f\right) ^{\natural }\right\}
\subseteq \left\{ \lambda \leq 4\widetilde{\mathcal{M}}\left( \mathcal{G}%
f_{1,\lambda }\right) \right\} ,$ it follows by Theorem \ref{ft} and (\ref%
{eq:L2cond}), 
\begin{align*}
& \left\vert \left[ \mathcal{G}f\right] ^{\natural }\right\vert
_{L_{p}\left( \mathbf{R}^{d+1}\right) }^{p} \\
& =p\int_{0}^{\infty }\lambda ^{p-1}\left\vert \left\{ \lambda \leq \left[ 
\mathcal{G}\left( f\right) \right] ^{\natural }\right\} \right\vert d\lambda
\leq p\int_{0}^{\infty }\lambda ^{p-1}\left\vert \left\{ \lambda \leq 4%
\widetilde{\mathcal{M}}\left( \mathcal{G}f_{1,\lambda }\right) \right\}
\right\vert d\lambda \\
& \leq C\int_{0}^{\infty }\lambda ^{p-3}\int \left\vert \widetilde{\mathcal{M%
}}\left( \mathcal{G}f_{1,\lambda }\right) \left( t,x\right) \right\vert
^{2}dtdxd\lambda \leq C\int_{0}^{\infty }\lambda ^{p-3}\int \left\vert 
\mathcal{G}f_{1,\lambda }\left( t,x\right) \right\vert ^{2}dtdxd\lambda \\
& \leq C\int_{0}^{\infty }\lambda ^{p-3}\int \left\vert f_{1,\lambda }\left(
t,x\right) \right\vert _{V}^{2}dtdxd\lambda \leq C\int_{0}^{\infty }\lambda
^{p-3}\int_{\left\{ \left\vert f\left( t,x\right) \right\vert _{V}>\delta
\lambda \right\} }\left\vert f\left( t,x\right) \right\vert
_{V}^{2}dtdxd\lambda \\
& \leq C\int \left( \int_{0}^{\left\vert f\left( t,x\right) \right\vert
_{V}/\delta }\lambda ^{p-3}d\lambda \right) \left\vert f\left( t,x\right)
\right\vert _{V}^{2}dtdx=C\int \frac{\left\vert f\left( t,x\right)
\right\vert _{V}^{p-2}}{\delta ^{p-2}\left( p-2\right) }\left\vert f\left(
t,x\right) \right\vert _{V}^{2}dtdx \\
& =C\left\vert f\right\vert _{L_{p}\left( \mathbf{R}^{d+1};V\right) }^{p}.
\end{align*}

The proof is completed by Theorem \ref{fs}.

\end{document}